\newfont{\Bbb}{msbm10 scaled\magstephalf}
 \newtheorem{thm}{Theorem}[section]
 \newtheorem{cor}[thm]{Corollary}
 \newtheorem{lem}[thm]{Lemma}
 \newtheorem{prop}[thm]{Proposition}
 \theoremstyle{definition}
 \newtheorem{defn}[thm]{Definition}
\theoremstyle{remark}
 \newtheorem{rem}[thm]{Remark}
 \newtheorem{exm}[thm]{Example}
 \numberwithin{equation}{section}
\newcommand{\pf}{\begin{proof}}
\newcommand{\zb}{\end{proof}}
\newcommand{\ma}{\mathcal}
\def\CC{{\mathbb C}}
\def\DD{{\mathbb D}}
\def\dim{\mathop{\rm dim}\nolimits}
\def\Ker{\mathop{\rm ker}\nolimits}
\def\NN{{\mathbb N}}
\begin{document}

\title[cyclic nearly invariant subspaces]{Cyclic nearly  invariant subspaces for semigroups of isometries}
\author[Y. Liang]{Yuxia Liang}
\address{Yuxia Liang \newline School of Mathematical Sciences,
Tianjin Normal University, Tianjin 300387, P.R. China.} \email{liangyx1986@126.com}
\author[J. R. Partington]{Jonathan R. Partington}
\address{Jonathan R. Partington \newline School of Mathematics,
  University of Leeds, Leeds LS2 9JT, United Kingdom.}
 \email{J.R.Partington@leeds.ac.uk}
\subjclass[2010]{47B38, 47A15,	43A15.}
\keywords{Nearly invariant subspace,  semigroup, model space, Toeplitz kernel, composition operator, universal operator}
\begin{abstract} In this paper, the structure of the nearly invariant subspaces for discrete semigroups generated by several (even infinitely many) automorphisms of the unit disc is described.
As part of this work,
the near $S^*$-invariance property of the image space $C_\varphi(\ker T)$ is explored for
composition operators $C_\varphi$, induced by
inner functions $\varphi$, and  Toeplitz operators $T$. After that, the analysis of nearly invariant subspaces for strongly continuous multiplication  semigroups of isometries is developed with a study of cyclic subspaces generated by a single Hardy class function. These are characterised in terms of model spaces in all cases when the outer factor is a product of an invertible function and a rational
(not necessarily invertible) function. Techniques used include the
theory of Toeplitz kernels and reproducing kernels.
\end{abstract}

\maketitle

\section{Introduction}
Study of the structure of invariant subspaces for particular classes of operators has produced a series of significant theorems and examples, successfully building relations with complex function theory, operator theory, and functional analysis. Especially, shift-invariant subspaces in Hardy space $H^2(\mathbb{D})$ of the unit disc are described by the well-known Beurling theorem
\cite[A.~Cor.~1.4.1]{Nik}. In order to classify the shift-invariant subspaces of the Hardy space of an annulus, Hitt introduced the nearly invariant subspaces on the Hardy space of the unit disc in \cite{hitt}, which was also  a generalisation
of Hayashi's results concerning Toeplitz kernels in \cite{Ha}.  Sarason continued to explore nearly invariant subspaces in \cite{Sa1,Sa2} revealing further  relations with  the kernels of Toeplitz operators. This work promoted  the understanding of particular subspaces of Hardy spaces.

Afterwards, the study of nearly invariant subspaces was extended to the vectorial case by Chalendar,  Chevrot and Partington in \cite{CCP10}.   C\^{a}mara and Partington continued with some systematic investigations of near invariance and Toeplitz kernels in \cite{CaP,CaP2}. More recently, many rather striking applications of nearly invariant subspaces have been provided. On the one hand,  Hartmann and Ross combined the truncated Toeplitz operator with nearly invariant subspaces of $H^2(\mathbb{D})$ and characterized the boundary behaviour of functions in \cite{HR}. On the other hand, Aleman,  Baranov,  Belov and  Hedenmalm  characterized the structure of the backward shift-invariant and nearly invariant subspaces in weighted Fock-type spaces of entire functions in \cite{ABBH}, which inspires the extension of investigation of nearly invariant subspace into more analytic spaces, such as  Brangesian spaces in \cite{ALS}. Moreover, O'Loughlin \cite{ryan}
established an application of nearly invariant subspaces to the theory of truncated
Toeplitz operators.

Questions concerning the structure of nearly invariant subspaces have led to developments in this branch of operator theory and may, therefore, be considered   important   in the study of linear bounded operators on  general separable Hilbert spaces. More recently, we posed the definitions of nearly invariant subspaces for a left invertible operator $T$ and  $C_0$ semigroup $\{T(t)\}_{t\geq 0}$ on a separable infinite-dimensional Hilbert space $\ma{H}$, which are the nontrivial generalisations of unilateral shift operators on Hardy space.
Recall that a bounded linear operator $T$ on $\ma{H}$ is a shift operator if $T$ is an isometry and $\|T^{*n} f\|\rightarrow 0$ for all $f\in \mathcal{H}$ as $n\rightarrow \infty$ (see, e.g. \cite[Chapter 1]{RR}).  The authors have characterized the nearly $T^{-1}$-invariant subspace for a pure shift operator $T$ with finite multiplicity  (i.e., the dimension of $\Ker T^*$ is finite) in terms of invariant subspaces on  vector-valued Hardy space under the backward shift (see, \cite[Theorem 2.4]{LP2}).

When we turn to consider the pure shift operator $T$ with infinite multiplicity, the Toeplitz operator $T_\theta$ with  infinite Blaschke product $\theta$ is taken as the preliminary example. Since $T_\theta^*$ is   universal in the sense of Rota (see \cite{Ro}), it can be directly connected with the shift semigroup $\{S(t)\}_{t\geq 0}$ on $L^2(0,\infty)$. Although we employ model spaces in Hardy space to equivalently construct some examples about minimal nearly invariant subspaces in \cite{LP3}, we are still stuck with the general formula of nearly $\{S(t)^*\}_{t\geq 0}$ invariant subspaces. In this sense, the main aim of this paper is to investigate the properties of cyclic nearly invariant subspaces for semigroups of isometries, including the discrete semigroup $\{T_{\psi_1^m\psi_2^n}:\;m,n\in \mathbb{N}_0\}$,  where $\NN_0 = \NN \cup \{0\}$,  generated by two different automorphisms $\psi_1,\;\psi_2$ of the unit disc and also the shift semigroup $\{S(t)\}_{t\geq 0}$ on $L^2(0,\infty)$,
as defined below in  \eqref{shift}. The subspaces in these examples show a new understanding of near invariance, composition operators and model spaces.

For the convenience of the readers, we first recall some relevant definitions on $H^2(\mathbb{D})$. The unilateral shift  $S:\; H^2(\mathbb{D})\rightarrow H^2(\mathbb{D})$  is defined as $[Sf](z)=zf(z).$ Its adjoint operator on $H^2(\mathbb{D})$ is the backward shift  $[S^*f](z)= (f(z)-f(0))/z.$
Given $\phi\in L^\infty(\mathbb{T}),$  the Toeplitz operator $T_\phi: \;H^2(\mathbb{D})\rightarrow H^2(\mathbb{D})$ is defined as $$(T_\phi f)(\lambda)=P_{H^2}(\phi \cdot f)(\lambda)=\int_{\mathbb{T}}\frac{\phi(\zeta)f(\zeta)}
{1-\overline{\zeta}\lambda}dm(\zeta),$$ where $P_{H^2}$ is the orthogonal projection from $L^2(\mathbb{T})$ onto $H^2(\mathbb{D}).$ Here $L^2(\mathbb{T})= H^2(\mathbb{D})\oplus \overline{zH^2(\mathbb{D})}.$ Moreover, the space $H^\infty(\mathbb{D})$ (or $H^\infty(\mathbb{C}_+)$)  is the Banach algebra of bounded analytic functions on $\mathbb{D}$ (or $\mathbb{C}_+$).
Furthermore, we say $f$ belongs to the Smirnov class if $f\in H(\mathbb{D})$ and
 $$\lim\limits_{r\rightarrow 1^{-}}\int_{\mathbb{T}}\log(1+|f(rz)|)dm(z)=
 \int_{\mathbb{T}}\log(1+|f(z)|)dm(z)<\infty.$$

 To be specific, a closed subspace $\ma{M}\subseteq H^2(\mathbb{D})$ is said to be nearly $S^*$-invariant (or weakly invariant) if whenever $f\in \ma{M}$ and $f(0)=0,$ then $S^*f\in\ma{M}.$ Informally, $\ma{M}\subseteq H^2(\mathbb{D})$ is  nearly $S^*$-invariant if the zeros of functions in $\ma{M}$ can be divided out without leaving the space. Hitt  formulated the most widely known characterization of nearly $S^*$-invariant subspaces in $H^2(\mathbb{D})$.

\begin{thm}\cite[Proposition 3]{hitt} \label{thm Hitt}The nearly $S^*$-invariant subspaces of $H^2(\DD)$ have the form $\ma{M}=uK$, with $u\in \ma{M}$ of unit norm, $u(0)>0,$ and $u$ orthogonal to all elements of $\ma{M}$ vanishing at the origin, $K$  an $S^*$-invariant subspace, and the operator of multiplication by $u$  isometric from $K$ into $H^2(\mathbb{D})$.
\end{thm}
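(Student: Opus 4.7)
The plan is to construct $u$ by orthogonal decomposition at the origin and then extract $K$ through a ``division by $u$'' recursion, whose well-definedness is guaranteed by the near $S^*$-invariance hypothesis. Consider $\mathcal{M}_0 := \{f \in \mathcal{M} : f(0) = 0\}$, the kernel of the bounded evaluation functional $f \mapsto f(0)$ on $\mathcal{M}$, which has codimension at most one. If $\mathcal{M}_0 = \mathcal{M}$, iterating $S^*$ via near invariance forces every function in $\mathcal{M}$ to vanish to infinite order at $0$, so $\mathcal{M} = \{0\}$ and the conclusion is vacuous. Otherwise let $u$ be the unique unit vector in $\mathcal{M} \ominus \mathcal{M}_0$ with $u(0) > 0$; by construction $u$ is orthogonal to $\mathcal{M}_0$, and up to the positive scalar $u(0)$ it is the reproducing kernel at $0$ for the Hilbert space $\mathcal{M}$.

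Next I would set up a peeling recursion that mimics long division of $f$ by $u$. For $f \in \mathcal{M}$, define $f^{(0)} = f$, $\lambda_n = f^{(n)}(0)/u(0)$, and $f^{(n+1)} = S^* \bigl( f^{(n)} - \lambda_n u \bigr)$. Near $S^*$-invariance keeps $f^{(n)} \in \mathcal{M}$, and the orthogonality $u \perp (f^{(n)} - \lambda_n u)$ combined with the isometric action of $S^*$ on $z H^2(\mathbb{D})$ gives the Parseval-type identity
\begin{equation*}
\|f^{(n)}\|^2 = |\lambda_n|^2 + \|f^{(n+1)}\|^2.
\end{equation*}
Summing telescopically yields $\sum_{k \geq 0} |\lambda_k|^2 \leq \|f\|^2$, so $h(z) := \sum_{k \geq 0} \lambda_k z^k$ lies in $H^2(\mathbb{D})$. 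Unrolling the recursion gives
\begin{equation*}
f(z) = u(z) \sum_{k=0}^{N} \lambda_k z^k + z^{N+1} f^{(N+1)}(z),
\end{equation*}
and on compact subsets of $\mathbb{D}$ the remainder is pointwise bounded by $|z|^{N+1} \|f\| / \sqrt{1 - |z|^2} \to 0$, so $f = u h$ on $\mathbb{D}$ and hence in $H^2(\mathbb{D})$. Setting $K := \{f/u : f \in \mathcal{M}\} \subseteq H^2(\mathbb{D})$ then gives $\mathcal{M} = u K$.

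The $S^*$-invariance of $K$ is now a one-line check: for $h = f/u \in K$, one has $u \cdot S^* h = (f - h(0) u)/z = S^* (f - \lambda_0 u) \in \mathcal{M}$ by near invariance. The main obstacle is the isometry claim $\|u h\|_{H^2} = \|h\|_{H^2}$ for $h \in K$. The peeling recursion already gives $\|f\|^2 = \sum_{k=0}^{N} |\lambda_k|^2 + \|f^{(N+1)}\|^2$, so everything reduces to showing $\|f^{(N)}\| \to 0$. Induction on the recursion yields the clean identity $f^{(N)} = u \cdot (S^*)^N h$, from which $f^{(N)} \to 0$ pointwise on $\mathbb{D}$ (since $(S^*)^N h \to 0$ in $H^2$), and weak compactness inside $\mathcal{M}$ forces every weak subsequential limit of $\{f^{(N)}\}$ to equal $0$. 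The hardest step, and the one I expect to require the most delicate Hilbert-space argument, is upgrading this weak null behaviour of the monotone sequence $\|f^{(N)}\|$ to genuine norm decay, since weak convergence to $0$ combined with decreasing norms does not in general force the limit itself to be $0$. Here I expect to need to exploit the specific structural role of $u$ as the normalised reproducing kernel at $0$ on $\mathcal{M}$, together with the $S^*$-invariance of $K$, to trap the limit; once $\|f^{(N)}\| \to 0$ is secured, the isometry and the entire theorem follow at once.
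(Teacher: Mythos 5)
Your peeling construction is sound as far as it goes: the extremal vector $u$, the recursion $f^{(n+1)}=S^*(f^{(n)}-\lambda_n u)$ staying in $\mathcal{M}$ by near invariance, the identity $\|f^{(n)}\|^2=|\lambda_n|^2+\|f^{(n+1)}\|^2$, the pointwise factorization $f=uh$ with $\|h\|\le\|f\|$, the verification that $K=\{f/u\}$ is $S^*$-invariant, and the formula $f^{(N)}=u\,(S^*)^N h$ are all correct. (Note the paper itself states this result without proof, quoting Hitt's Proposition 3, so the comparison here is with the known proofs in the literature.) But the proof is not complete: the isometry of multiplication by $u$ on $K$ --- equivalently $\|f^{(N)}\|\to 0$ --- is exactly the deep content of Hitt's theorem, and you leave it open. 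The route you sketch cannot close the gap as stated: $f^{(N)}\to 0$ weakly together with $\|f^{(N)}\|$ nonincreasing does not force $\|f^{(N)}\|\to 0$ (an orthonormal sequence is weakly null with constant norms), and your identity $f^{(N)}=u\,(S^*)^N h$ only yields $\|f^{(N)}\|=\|(S^*)^Nh\|\to 0$ if one already knows multiplication by $u$ is isometric on $K$, which is circular. Without the isometry you obtain only the one-sided bound $\|f/u\|\le\|f\|$, and even the closedness of $K$ (needed for it to be an $S^*$-invariant subspace in the usual sense) is left unjustified, since the inequality runs the wrong way to transfer Cauchy sequences from $K$ to $\mathcal{M}$.

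Closing this gap requires a genuinely new ingredient, not a refinement of the weak-compactness idea. Hitt's original argument and Sarason's later treatment both bring in extra structure: Sarason proves the isometric divisor property by realizing $\mathcal{M}$ inside a de Branges--Rovnyak space $H(b)$ associated with the extremal function (writing $u=a/(1-b)$ for a suitable Schur pair and exploiting the contractive/isometric embedding theory there), while Hitt's proof runs a separate, rather intricate limiting argument specific to the extremal function rather than the generic telescoping you use. So the correct assessment of your proposal is: the ``easy half'' (existence of the factorization $\mathcal{M}=uK$ with $K$ backward-shift invariant and $\|f/u\|\le\|f\|$) is proved, but the isometry claim --- the part that makes the theorem useful, e.g.\ for identifying $K$ with a model space acting isometrically under $u$ --- is asserted with an acknowledged missing step, and the step as sketched would fail.
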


Note that the shift operator $T$ is left invertible, so the authors defined the  nearly $T^{-1}$-invariant subspace in $\ma{H}$ as below. Given a left invertible $T\in \ma{B}(\ma{H})$, a subspace $\ma{M}\subseteq \ma{H}$ is said to be nearly $T^{-1}$-invariant if whenever $g\in \ma{H}$ such that $Tg\in \ma{M},$ then $g\in \ma{M}$ (see, \cite[Definition 1.2]{LP2}). Based on our work on  the characterizations of nearly $T^{-1}$-invariant subspace of a shift operator $T$ with finite multiplicity, we obtained a vector-valued characterization of the nearly $T_{B_m}^{-1}$-invariant subspaces in $H^2(\mathbb{D})$ when $B_m$ is a finite Blaschke product with degree $m$ (see, \cite[Corollary 2.6]{LP2}).

 It is well known that  a general Hardy class function can be decomposed as the product
of  an inner factor and an outer factor.  Examples of inner functions include Blaschke products. In particular, when a Blaschke product has degree $1,$ it is also called an  automorphism and denoted by $\psi$ in this paper. It is clear that $\psi$ is a one-to-one analytic map  $\mathbb{D}$ onto $\mathbb{D},$ given by $$\psi(z)=\lambda \frac{a-z}{1-\overline{a}z},
\quad \hbox{where} \quad |\lambda|=1\;\mbox{and}\;a\in \mathbb{D}.$$

 For any finite Blaschke product,  there is a Taylor series converging uniformly on the closed disc (in fact with absolutely summable coefficients). Then truncating the series  and taking the polynomials of  $S$, we see that \textit{all the  Beurling-type invariant subspaces $\theta H^2(\mathbb{D})$ for some inner function $\theta$ are also  $T_\psi$-invariant subspaces, and conversely. Meanwhile, the invariant subspaces  for  $T_{\psi}^{-1}$   are the model spaces $K_\theta=H^2\ominus \theta H^2.$ }

 The above research leads us naturally to ask two direct questions.\vspace{0.5mm}

  (1) {\it{Is there a Hitt-like formula for nearly $T_\psi^{-1}$-invariant subspace for the automorphism $\psi$? }}

(2) {\it{How can we represent the nearly $T_{\theta}^{-1}$-invariant subspaces in $H^2(\mathbb{D})$ {\rm(or $H^2(\mathbb{C_+})$ )} for an infinite degree Blaschke product $\theta$?}}

 For  Question (1), this will be solved by using composition operators $C_\varphi$ and model spaces $K_\theta$ (see Section 2). After that, we continue to examine the nearly invariant subspaces with respect to the finitely-generated semigroup $\{T_{\psi_1^m\psi_2^n}^{-1}:\;m,n\in \mathbb{N}_0\}$ with automorphisms $\psi_1$ and $\psi_2$ on $\mathbb{D}.$ During this process, we obtain  several interesting results. Especially, for any non-automorphic inner function $\varphi,$ we prove that  the subspace $C_\varphi(\Ker T)$ is not  nearly $S^*$-invariant for any Toeplitz operator $T$ with $\dim \Ker T\geq 2.$

For  Question (2), we should mention a remarkable approach to the Invariant Subspace Problem (ISP)   related to the universal operators  in the sense of Rota. The best known example of a universal operator is the adjoint of a shift operator with
infinite multiplicity, such as the adjoint Toeplitz operator $T_{\theta}^*=T_\theta^{-1}: H^2(\mathbb{D})\rightarrow H^2(\mathbb{D})$, with  an infinite Blaschke product $\theta$ (see e.g. \cite{Ca}). In particular, Cowen and Gallardo-Guti\'errez exhibited a class  $T_{\theta}^*$  to suggest an approach to the ISP in \cite{CG}. Furthermore, it is known that $T_{\theta}^*$ is similar to the backward shift $S(1)^*$ on $L^2(0,\infty)$, given by $S(1)^*f(t)=f(t+1)$. Here $S(1)^*$  is a special case of the adjoint semigroup  $\{S(t)^*\}_{t\geq 0}$ defined by  $(S(t)^{*} f)(\zeta)=f(\zeta+t)$. Recall that the shift semigroup $\{S(t)\}_{t\geq 0}$ on $L^2(0,\infty)$ is  \begin{align}(S(t) f)(\zeta)=\left\{
                             \begin{array}{ll}
                               0, & \zeta\leq t, \\
                               f(\zeta-t), & \zeta>t,
                             \end{array}
                           \right.\label{shift}\end{align} which is a $C_0$-semigroup (see, e.g.\cite{Par}). One may refer to \cite{GGPR} for the shift semigroup on weighted spaces $L^2((0,\infty), \tilde{w}(t)dt)$. Now the following commutative diagrams hold for the shift semigroup on $L^2(0,\infty)$ and multiplication semigroups on Hardy spaces.

 \begin{eqnarray*}\begin{CD}
L^2(0,\infty) @>S(t)>> L^2(0,\infty)\\
@VV \ma{L}  V @VV \ma{L} V\\
H^2(\mathbb{C}_+)@>M(t)>> H^2(\mathbb{C}_+)\\
@VV {V^{-1}}    V  @VV  {V^{-1}}   V\\
H^2(\mathbb{D}) @> T(t)>> H^2(\mathbb{D}).
\end{CD}\label{commute1}\end{eqnarray*}
The Laplace transform $\mathcal{L}$ and the isometric isomorphism $V:\; H^2(\mathbb{D})\rightarrow H^2(\mathbb{C}_+)$ are defined as \begin{eqnarray} (\mathcal{L} f)(s)=\int_{0}^\infty e^{-st} f(t)dt,\;\;(V^{-1}g)(z)=\frac{2\sqrt{\pi}}{1+z}g
\left(\frac{1-z}{1+z}\right). \label{V-map}\end{eqnarray}
The multiplication semigroups $\{M(t)\}_{t\geq 0}$ on $H^2(\mathbb{C}_+)$ and $\{T(t)\}_{t\geq0}$ on $H^2(\mathbb{D})$ are defined by \begin{eqnarray}(M(t) g)(s)=e^{-st} g(s)\;\mbox{and}\; (T(t) h)(z)=\phi^t(z) h(z)\label{MT}\end{eqnarray} with  $\phi^{t}(z):=\exp\left(-t(1-z)/(1+z)\right)$. Meanwhile, their adjoint semigroups are
\begin{eqnarray}(M(t)^* g)(s)= P_{H^2(\mathbb{C}_+)}(e^{st} g(s)) \quad \mbox{and}\quad  (T(t)^* h)(z)=P_{H^2(\mathbb{D})}(\phi^{-t}(z) h(z)).\label{M*}\end{eqnarray}

The above fact implies that the answer to Question (2) should have links with the near invariance of the shift semigroup. So we recall the following definition.
\begin{defn}\cite[Definition 1.4]{LP3} Let $\{T(t)\}_{t\geq0}$ be a $C_0$-semigroup in $\ma{B}(\ma{H})$ and $\ma{M}\subseteq\ma{H}$ be a subspace. If for every $f\in \ma{H}$ with $T(t)f \in \ma{M}$ for some $t>0$ we have $f\in \ma{M},$ we call $\ma{M}$ a  nearly $\{T(t)^{*}\}_{t\geq 0}$ invariant subspace.\end{defn}

Note that, by the universality discussion above,  describing the invariant
subspaces of a single $T(t)^*$ may be an intractable problem, and
looking at the common invariant subspaces of a semigroup can be a more fruitful line of
enquiry, as in the work of Lax \cite{lax} on the translation semigroup. The same considerations
apply to nearly invariant subspaces.

In our recent paper \cite{LP3}, we mainly demonstrated a series of prototypical examples for minimal nearly $\{S(t)^*\}_{t\geq0}$ invariant subspaces in $L^2(0,\infty)$, closely related with nearly $\{M(t)^*\}_{t\geq 0}$ invariance on $H^2(\mathbb{C}_+)$. As a subsequent work,   we use the new techniques to determine the nontrivial cyclic nearly invariant subspaces for shift semigroups in much greater generality.

To be specific, the article is organized as follows. In Section 2, we concentrate on the near invariance characterization for a classical discrete semigroup of isometries induced by two or more (even infinitely many) different automorphisms and related questions, especially give the solution to Question (1). Section 3 and Section 4 are committed to  Question (2). We creatively employ the reproducing kernel, minimal Toeplitz kernel and model space to formulate a prototypical class of cyclic nearly invariant subspaces $N(g)=\bigvee\{ge^{-\lambda s}:\;0\leq \lambda \leq \delta\}$
 with $g(s)=1/(1+s)^{n+1}$ $(n\in \mathbb{N}_0)$ in Section 3. These are required for  Section 4,
 which  is concerned with characterizing the subspace $N(g)$ with a rational outer function $g$, which
is expressed in terms of a  model space in
the Hardy space $H^2(\mathbb{C}_+)$. This leads to the more general result for those $g$ where a rational outer function is multiplied by a function invertible in $L^\infty$. The corresponding descriptions in $H^2(\mathbb{D})$ are also addressed.

\section{Near invariance for discrete semigroups and related questions}

In this section, we will use repeatedly the automorphism on $\mathbb{D}$ defined by $$\psi(z)=\lambda \frac{a-z}{1-\overline{a}z},\;|\lambda|=1\;\mbox{and}\;a\in \mathbb{D}.$$

In the subsequent subsections, we first answer  Question (1) and then investigate  some related  issues that arose during this process, including the complementary subspace  $K_{z\theta\circ \psi}\ominus C_\psi(K_\theta)$ and near $S^*$-invariance of $C_\varphi(\Ker T)$ for an inner function $\varphi$ and Toeplitz kernel  $\Ker T$ (including model spaces).

\subsection{Nearly $\{T_{\psi_1^m\psi_2^n}^{-1}:\;m,n\in \mathbb{N}_0\}$ invariant subspaces} In this subsection, we will take two automorphisms $\psi_1, \psi_2$  with zeros $a_1, a_2\in \mathbb{D}$. We are interested in finding the closed subspaces of $ H^2(\mathbb{D})$ that are both nearly invariant with respect to  $T_{\psi_1}^{-1}$ and  $T_{\psi_2}^{-1}$, respectively. In this case $\ma{M}$ is nearly $T_{\psi_1^m\psi_2^n}^{-1}$-invariant for all $m, n\geq 0.$  This forms a finitely-generated commutative isometric operator semigroup denoted by $\{T_{\psi_1^m\psi_2^n}^{-1}:\;m,n\in \mathbb{N}_0\}$. In what follows, we will represent  $\ma{M}$ in terms of a Hitt-like subspace.

For an automorphism $\psi,$ we recall the bounded composition operator $C_\psi:\;H^2(\mathbb{D})\rightarrow H^2(\mathbb{D})$ by $C_\psi f=f\circ\psi.$ Then it holds that   \begin{eqnarray} C_\psi T_z C_\psi^{-1}=C_\psi T_z C_{\psi^{-1}}= T_\psi. \label{unitary1}\end{eqnarray} That is, $T_\psi$ is similar to $T_z$ via $C_\psi$.  By Hitt's result (Theorem \ref{thm Hitt}), $uK_\theta$ is nearly $T_{\bar{z}}$-invariant, where $\theta$ is inner and $K_\theta$ is the model space, $u(0)\neq 0$ and multiplication by $u$ is isometric from $K_\theta$ to $H^2$. So the similarity relationship \eqref{unitary1} yields that $C_\psi (uK_\theta)=(u\circ \psi) C_\psi(K_\theta)$ is nearly $T_{\psi}^{-1}$-invariant   with $(u\circ \psi) (a)=u(0)\neq 0.$  Since the adjoint operator $C_{\psi}^*$ is generally  not a composition operator (see, e.g. \cite[Theorem 9.2]{CM1995}), $C_{\psi}(K_\theta)$ is not a model space in general, even though $C_{\psi}(\theta H^2)$ has the form $(\theta \circ \psi) H^2$. In order to study $C_\psi(K_\theta)$ in detail, we first explore its near $S^*$-invariance. Further note the model space is a special Toeplitz kernel,  so we translate the above question into  research on Toeplitz kernels. The next theorem indicates that the image space of any Toeplitz kernel under a composition operator induced by an automorphism  is also a Toeplitz kernel.

\begin{thm} \label{thm FG} Let $\psi$ be an automorphism and $\Ker T_F$ be a Toeplitz kernel with $F\in L^\infty(\mathbb{T}),$ it follows that
$$C_\psi(\Ker T_F )= \Ker T_G, \;\mbox{where} \;G= (F\circ \psi)\frac{\psi}{z}.$$\end{thm}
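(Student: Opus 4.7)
The plan is to reduce the equality to the orthogonality criterion for Toeplitz kernels and exploit the invertibility of $C_\psi$ on $H^2(\mathbb{D})$. Since $\psi$ is an automorphism, $C_\psi$ is a bijection with inverse $C_{\psi^{-1}}$, and $\psi$ is unimodular on $\mathbb{T}$, so $\psi\bar\psi=1$ there. Using the standard characterisation
\[
\Ker T_\phi = \{h \in H^2(\mathbb{D}) : \phi h \in \overline{zH^2(\mathbb{D})}\},
\]
it suffices to show that $Ff \in \overline{zH^2}$ if and only if $G(f\circ\psi) \in \overline{zH^2}$. The key algebraic identity on $\mathbb{T}$ is
\[
G \cdot (f\circ\psi) = (F\circ\psi)\cdot\frac{\psi}{z}\cdot(f\circ\psi) = \frac{\psi}{z}\bigl((Ff)\circ\psi\bigr),
\]
so the problem reduces to proving, for $u \in L^2(\mathbb{T})$, the equivalence $u \in \overline{zH^2} \iff (\psi/z)(u\circ\psi) \in \overline{zH^2}$.

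For the forward direction, write $u = \bar z\bar v$ with $v \in H^2$; then $u\circ\psi = \bar\psi\cdot\overline{v\circ\psi}$, and the unimodularity $\psi\bar\psi=1$ gives
\[
\frac{\psi}{z}(u\circ\psi) = \frac{1}{z}\,\overline{v\circ\psi} = \bar z\,\overline{v\circ\psi} \in \overline{zH^2},
\]
since $v\circ\psi = C_\psi v \in H^2$. Conversely, if $(\psi/z)(u\circ\psi) = \bar z\bar h$ with $h \in H^2$, multiplying by $z\bar\psi$ yields $u\circ\psi = \overline{\psi h}$; composing with $\psi^{-1}$ and using $\psi\circ\psi^{-1} = \mathrm{id}$ on $\mathbb{T}$ one gets
\[
u = \overline{(\psi h)\circ\psi^{-1}} = \bar z\,\overline{h\circ\psi^{-1}} = \bar z\,\overline{C_{\psi^{-1}}h} \in \overline{zH^2},
\]
because $C_{\psi^{-1}}h \in H^2$. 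Applied with $u=Ff$ this furnishes both inclusions, since every element of $\Ker T_G$ has the form $f\circ\psi$ with $f = g\circ\psi^{-1} \in H^2$.

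The main obstacle — more a matter of care than of depth — is the bookkeeping of composition and conjugation on $\mathbb{T}$. The factor $\psi/z$ appearing in $G$ is exactly the unimodular corrector needed so that $(Ff)\circ\psi$ returns to $\overline{zH^2}$; in the converse direction one uses the symmetric identity $(\psi h)\circ\psi^{-1} = z\,(h\circ\psi^{-1})$. Beyond these manipulations the only ingredients are the unimodularity of $\psi$ on $\mathbb{T}$ and the boundedness of $C_\psi$ and $C_{\psi^{-1}}$ on $H^2(\mathbb{D})$ (Littlewood's subordination theorem), so the argument amounts to a short direct verification.
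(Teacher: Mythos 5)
Your proposal is correct and takes essentially the same route as the paper: the same key identity $G\,(f\circ\psi)=\frac{\psi}{z}\bigl((Ff)\circ\psi\bigr)$ together with the unimodularity of $\psi$ on $\mathbb{T}$ and the boundedness of $C_\psi$ and $C_{\psi^{-1}}$. The only organizational difference is that the paper obtains the reverse inclusion by applying its one-sided inclusion to the automorphism $\psi^{-1}$ and checking that the composite symbol $(G\circ\psi^{-1})\,\psi^{-1}/z$ reduces to $F$, whereas you verify the converse implication directly; the two verifications are equivalent.
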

\begin{proof}
 We take $f \in \Ker T_F$ and show first that $f\circ \psi \in \Ker T_G$. It follows that $ F f = g$ for some $g \in \bar{z}\overline{H^2}$. So
$$ G(f\circ \psi)=(F\circ \psi)\frac{\psi}{z}(f\circ \psi)=\frac{\psi}{z} (g\circ \psi).$$
Now since $g\in  \overline{z}\overline{H^2}$, then
$$G(f\circ \psi)=\frac{\psi}{z}(g\circ \psi)\in \frac{\psi}{z}(\bar{\psi} \overline{H^2})=\overline{z}\overline{H^2}.$$ This implies $ f\circ \psi \in \Ker T_G.$ So $C_\psi(\Ker T_F ) \subseteq \Ker T_G.$

For the automorphism $\psi^{-1},$ the same argument shows that $C_{\psi^{-1}}(\Ker T_G)\subseteq  \Ker T_H$, where
$$ H=(G\circ \psi^{-1}) \frac{\psi^{-1}}{z}=F\frac{z}{\psi^{-1}}\frac{\psi^{-1}}{z}=F.$$

Since $C_\psi^{-1}=C_{\psi^{-1}},$  we see that $C_\psi$ acts as a bijection between $\Ker T_F$ and $\Ker T_G$, so the desired result follows.
\end{proof}
Letting $F=\overline{\theta}$  in Theorem \ref{thm FG},  a corollary  follows.
\begin{cor} \label{cor model case} Let $\psi$ be  an automorphism and $\theta$ be an inner function, then it follows that
$$C_\psi(K_\theta) =\Ker T_{(\overline{\theta}\circ \psi)\psi/z}.$$ \end{cor}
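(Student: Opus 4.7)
The proof plan is an immediate reduction to Theorem \ref{thm FG} via the standard identification of the model space as a Toeplitz kernel. First I would recall that $K_\theta = \Ker T_{\overline{\theta}}$: indeed, $f \in H^2$ lies in $K_\theta$ precisely when $f \perp \theta H^2$, equivalently $\langle f, \theta z^k \rangle = 0$ for every $k \geq 0$, equivalently $\overline{\theta} f \in \overline{zH^2}$, i.e.\ $P_{H^2}(\overline{\theta} f) = 0$. Since $\theta$ is inner, $\overline{\theta} \in L^\infty(\mathbb{T})$ (in fact $|\overline{\theta}| = 1$ a.e.), so Theorem \ref{thm FG} applies with $F = \overline{\theta}$.

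With this choice the formula produced by Theorem \ref{thm FG} becomes
$$G = (F \circ \psi)\frac{\psi}{z} = (\overline{\theta}\circ \psi)\frac{\psi}{z},$$
and substituting directly gives
$$C_\psi(K_\theta) = C_\psi(\Ker T_{\overline{\theta}}) = \Ker T_{(\overline{\theta}\circ\psi)\psi/z},$$
which is exactly the asserted identity.

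There is no substantive obstacle; the only sanity check is that the symbol $(\overline{\theta}\circ\psi)\psi/z$ belongs to $L^\infty(\mathbb{T})$, which is clear since $\theta \circ \psi$ is inner (the composition of an inner function with a disc automorphism), so $|\overline{\theta}\circ\psi|=1$ a.e.\ on $\mathbb{T}$, while $|\psi/z|=1$ on $\mathbb{T}$ because $\psi$ is unimodular there. Hence $T_{(\overline{\theta}\circ\psi)\psi/z}$ is a well-defined bounded Toeplitz operator on $H^2(\mathbb{D})$, and the corollary follows with no further computation beyond quoting Theorem \ref{thm FG}.
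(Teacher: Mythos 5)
Your proposal is correct and matches the paper's own (one-line) derivation: the paper also obtains the corollary simply by taking $F=\overline{\theta}$ in Theorem \ref{thm FG}, using the identification $K_\theta=\Ker T_{\overline{\theta}}$. Your extra check that the symbol $(\overline{\theta}\circ\psi)\psi/z$ is unimodular, hence in $L^\infty(\mathbb{T})$, is a harmless addition but not needed beyond what the paper records.
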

The following theorem expresses $C_\psi(K_\theta)$   as an invertible function  times a model space.
\begin{thm}\cite[Theorem 6.8]{GMR}\label{unitary} If $\theta$ is an inner function and $\psi$ is an automorphism, then $$f\rightarrow (\sqrt{\psi'}C_\psi)f$$ defines a unitary operator from $K_\theta$ onto $K_{\theta\circ \psi}.$\end{thm}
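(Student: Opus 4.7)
The plan is to exhibit the weighted composition operator $U_\psi f := \sqrt{\psi'}\cdot(f\circ\psi)$ as a unitary operator on $H^2(\mathbb{D})$ that carries the Beurling subspace $\theta H^2$ onto $(\theta\circ\psi)H^2$; then taking orthogonal complements inside $H^2(\mathbb{D})$ will immediately yield $U_\psi(K_\theta) = K_{\theta\circ\psi}$.

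First I would check that $U_\psi$ is well defined into $H^2(\mathbb{D})$. Since $\psi(z) = \lambda(a-z)/(1-\bar a z)$ has $\psi'(z) = -\lambda(1-|a|^2)/(1-\bar a z)^2$, which is non-vanishing on the simply connected domain $\mathbb{D}$, a single-valued analytic branch of $\sqrt{\psi'}$ exists and lies in $H^\infty(\mathbb{D})$. The boundary values of $\psi$ give a smooth diffeomorphism of $\mathbb{T}$ with $dm(\psi(\zeta)) = |\psi'(\zeta)|\,dm(\zeta)$, so the change-of-variable formula on $\mathbb{T}$ yields
$$\int_\mathbb{T}|U_\psi f|^2\,dm = \int_\mathbb{T}|\psi'|\cdot|f\circ\psi|^2\,dm = \int_\mathbb{T}|f|^2\,dm,$$
showing $U_\psi$ is an isometry. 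For surjectivity I would compute $U_{\psi^{-1}}U_\psi$ directly, using the chain-rule identity $\psi'\cdot\big((\psi^{-1})'\circ\psi\big)\equiv 1$; this yields $U_{\psi^{-1}}U_\psi = cI$ for some unimodular constant $c$ arising from the branch choice, identifying $U_\psi$ as unitary on $H^2(\mathbb{D})$.

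Next, for any $h\in H^2$ the factorisation
$$U_\psi(\theta h) = \sqrt{\psi'}\cdot(\theta\circ\psi)\cdot(h\circ\psi) = (\theta\circ\psi)\cdot U_\psi(h),$$
combined with $U_\psi(H^2) = H^2$, gives $U_\psi(\theta H^2) = (\theta\circ\psi)H^2$ (note $\theta\circ\psi$ is inner because $\psi$ preserves $\mathbb{T}$ as a set). Since $U_\psi$ is unitary it respects orthogonal decompositions, whence
$$U_\psi(K_\theta) = U_\psi(H^2)\ominus U_\psi(\theta H^2) = H^2\ominus(\theta\circ\psi)H^2 = K_{\theta\circ\psi},$$
which is the stated claim.

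The principal obstacle is the careful treatment of $\sqrt{\psi'}$: one must fix a coherent holomorphic branch on $\mathbb{D}$ and keep track of the unimodular constant produced when composing $U_\psi$ with $U_{\psi^{-1}}$, since this constant is the only thing separating the naive composition from the identity. Once this bookkeeping is handled, the remainder of the argument reduces to the change-of-variable identity on $\mathbb{T}$ and an elementary orthogonal-decomposition step; in particular no property of $\theta$ beyond being inner (so that $\theta\circ\psi$ is again inner and $\theta H^2$ is closed) is needed.
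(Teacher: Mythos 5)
Your proposal is correct, and since the paper gives no proof of this statement (it is quoted directly from the cited reference \cite[Theorem 6.8]{GMR}), there is nothing in the paper to diverge from: your argument --- unitarity of $f\mapsto\sqrt{\psi'}\,(f\circ\psi)$ on $H^2(\mathbb{D})$ via the change of variable $|\psi'|\,dm$ on $\mathbb{T}$ together with $U_\psi(\theta H^2)=(\theta\circ\psi)H^2$ and passage to orthogonal complements --- is exactly the standard proof of this fact. The only point worth spelling out is that $U_{\psi^{-1}}U_\psi=cI$ directly gives surjectivity of $U_{\psi^{-1}}$ rather than of $U_\psi$; either apply the same identity with $\psi$ and $\psi^{-1}$ interchanged, or note that the surjective isometry $U_{\psi^{-1}}$ is then unitary and $U_\psi=c\,U_{\psi^{-1}}^{-1}$.
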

Summarizing Corollary \ref{cor model case} and Theorem \ref {unitary} yields the following corollary.
\begin{cor}\label{cor equal}Let $\psi$ be an automorphism and $\theta$ be an inner function, then
$$C_\psi(K_\theta) =\Ker  T_{(\overline{\theta}\circ \psi)\psi/z}=\frac{1}{\sqrt{\psi'}}K_{\theta\circ \psi}.$$  \end{cor}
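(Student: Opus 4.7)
The statement is essentially a one-line corollary of the two results cited immediately above it, so the plan is simply to assemble them correctly. The first equality $C_\psi(K_\theta) = \Ker T_{(\overline{\theta}\circ \psi)\psi/z}$ is nothing other than Corollary \ref{cor model case}, which was itself deduced from Theorem \ref{thm FG} by specialising $F=\overline{\theta}$; I would just cite this and move on.

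For the second equality, I would invoke Theorem \ref{unitary}, which says that $f\mapsto \sqrt{\psi'}\, C_\psi f$ is a unitary operator from $K_\theta$ onto $K_{\theta\circ\psi}$. The crucial consequence for our purposes is surjectivity of this map, giving the set equality $\sqrt{\psi'}\cdot C_\psi(K_\theta) = K_{\theta\circ\psi}$. Since $\psi$ is a disc automorphism, a direct computation shows that $\psi'(z) = -\lambda(1-|a|^2)/(1-\overline{a}z)^2$ is a nonvanishing bounded analytic function on $\overline{\mathbb{D}}$, bounded away from zero, so $1/\sqrt{\psi'}$ (for any chosen analytic branch of the square root on $\overline{\mathbb{D}}$) is a well-defined bounded analytic function. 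Multiplying both sides of the set equality by $1/\sqrt{\psi'}$ then yields $C_\psi(K_\theta) = (1/\sqrt{\psi'})\, K_{\theta\circ\psi}$, which is the desired identity.

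There is really no obstacle here: the only subtlety worth mentioning is that the three objects in the displayed equality are being compared as subsets of $H^2(\mathbb{D})$, not as normed spaces, so one does not need to track isometric constants. The unitarity in Theorem \ref{unitary} is used only through its surjectivity; the isometry aspect is incidental. Consequently the whole proof can be written in two or three lines, simply chaining the two cited equalities.
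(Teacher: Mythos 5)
Your proposal is correct and matches the paper's own derivation: the paper obtains the corollary exactly by citing Corollary \ref{cor model case} for the first equality and Theorem \ref{unitary} (used only through the surjectivity of $\sqrt{\psi'}\,C_\psi$ onto $K_{\theta\circ\psi}$) for the second. Your extra remark that $\psi'$ is bounded away from zero on $\overline{\mathbb{D}}$, so that $1/\sqrt{\psi'}$ makes sense, is a harmless elaboration of the same argument.
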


This  indicates  that
\begin{align}C_\psi(K_\theta)=(1-\overline{a}z)K_{\theta\circ \psi}\subsetneq K_{z(\theta\circ \psi)}, \label{CbK}\end{align}  where $\psi(a)=0.$  Next we describe the subspace $C_\psi(K_\theta)$  more concretely.

\begin{rem} $(1)$ If $\theta(0)=0$ then $C_\psi(K_\theta) = K_{z(\theta\circ \psi)/ \psi}$.

This can be deduced from Corollary \ref{cor model case}, since $C_\psi(K_\theta) =\Ker T_{(\overline{\theta}\circ \psi) \psi/z},$ and the complex conjugate of this symbol is $z(\theta\circ \psi)/\psi$, which is an inner function due to $\psi$  divides $\theta\circ \psi$.

$(2)$ If $\theta(0)\neq 0$, it follows $C_\psi(K_\theta)$ is not a model space, which is a proper subspace of   $K_{z(\theta\circ \psi)}.$  Still, using Corollary \ref{cor model case}, we have
$C_\psi(K_\theta)=\Ker T_{(\overline{\theta}\circ \psi)\psi/z} .$ For $$f\in \Ker T_{(\overline{\theta}\circ \psi)\psi/z},\; \mbox{so}\; \frac{(\overline{\theta}\circ \psi)\psi}{z} f=g$$
with some $g\in \bar{z}\overline{H^2},$ then $$\overline{z} (\overline{\theta}\circ\psi)f=g\overline{\psi} \in \bar{z}\overline{H^2},\;\;\mbox{so}\; f\in {\rm Ker }T_{\overline{z}(\overline{\theta}\circ \psi)}=K_{z(\theta\circ \psi)}.$$
\end{rem}
For completeness, we describe the subspace $K_{z(\theta\circ \psi)}\ominus C_\psi(K_\theta)$
explicitly.

 \begin{prop}\label{prop sub} Let $\psi$ be an  automorphism with zero $a\in \mathbb{D}$ and $\theta$ be an inner function, then
\begin{align*}K_{z(\theta\circ \psi)}\ominus C_\psi (K_{\theta})=\mathbb{C}\frac{z\theta\circ \psi -a\theta(0)}{z-a}= \frac{z\theta\circ \psi -a\theta(0)}{\psi}K_\psi. \end{align*}
\end{prop}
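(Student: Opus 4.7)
Write $\Theta := \theta\circ\psi$, so $\Theta(a) = \theta(\psi(a)) = \theta(0)$, and set $h(z) := (z\Theta(z) - a\theta(0))/(z-a)$. The numerator vanishes at $z=a$, so $h\in H^2$, and clearly $h\neq 0$. The plan is to prove $K_{z\Theta}\ominus C_\psi(K_\theta) = \mathbb{C}h$ by verifying (i) $h\in K_{z\Theta}$, (ii) $h\perp C_\psi(K_\theta)=(1-\bar{a}z)K_\Theta$ (using \eqref{CbK}), and (iii) that this orthogonal complement is one-dimensional, and then to confirm the second equality by a direct computation using the reproducing kernel of $K_\psi$.

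For (i) and (ii) the inner-product computations exploit two $\mathbb{T}$-identities: $|\Theta|=1$ and $1-a\bar z = (z-a)/z$. For (i), given $p\in H^2$, the integrand simplifies on $\mathbb{T}$ to
$h\,\overline{z\Theta p} = (1 - a\theta(0)\overline{z\Theta})\bar p/(z-a).$
Expanding $1/(z-a) = \sum_{k\geq 0} a^k \bar z^{k+1}$ and using $\bar p\in\overline{H^2}$, $\overline{z\Theta p}\in\overline{zH^2}$, one checks that all Fourier modes in sight are strictly negative, so $\langle h, z\Theta p\rangle = 0$. For (ii), given $f\in K_\Theta$, the identity $1-a\bar z = (z-a)/z$ yields $h\cdot(1-a\bar z) = \Theta - a\theta(0)/z$; hence
$\langle h,(1-\bar{a}z)f\rangle = \langle \Theta,f\rangle - a\theta(0)\,\overline{(zf)(0)} = 0,$
since $f\perp\Theta\in\Theta H^2$ and $(zf)(0)=0$.

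For (iii), I apply Theorem \ref{thm FG} with $F=\overline{z\theta}$: the resulting symbol simplifies on $\mathbb{T}$ to $G = (\overline{z\theta}\circ\psi)\psi/z = \bar\psi\bar\Theta\psi/z = \overline{z\Theta}$, giving $C_\psi(K_{z\theta}) = K_{z\Theta}$. Since $C_\psi$ is a bounded bijection on $H^2$ with bounded inverse $C_{\psi^{-1}}$, it maps closed subspaces to closed subspaces and preserves codimension. The inclusion $K_\theta\subset K_{z\theta}$ has codimension one (with complement $\mathbb{C}\theta$), so $C_\psi(K_\theta)$ is closed and of codimension one in $K_{z\Theta}$. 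Combined with (i) and (ii), the nonzero $h$ spans this one-dimensional complement.

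For the final equality, $K_\psi$ is one-dimensional, spanned by the reproducing kernel $k_a^\psi(z) = 1/(1-\bar{a}z)$ (using $\psi(a)=0$). The relation $\psi(z)(1-\bar{a}z)=\lambda(a-z)$ gives
$(z\Theta(z) - a\theta(0))/(\psi(z)(1-\bar{a}z)) = -h(z)/\lambda,$
so $\frac{z\Theta-a\theta(0)}{\psi}K_\psi = \mathbb{C}h$. I expect the codimension argument in step (iii) — in particular the identification $C_\psi(K_{z\theta})=K_{z\Theta}$ via Theorem \ref{thm FG} — to be the key conceptual step; the orthogonality computations in (i)--(ii) are routine once the $\mathbb{T}$-simplifications $|\Theta|=1$ and $1-a\bar z = (z-a)/z$ are spotted.
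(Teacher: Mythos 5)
Your proof is correct. It identifies the same spanning vector as the paper, but reaches it along a different route: the paper produces the vector as a Toeplitz-operator image, namely $T_{\frac{1}{z-a}}\bigl(z(\theta\circ\psi)\bigr)=P_{H^2}\bigl(z(\theta\circ\psi)/(z-a)\bigr)$, observes (without detail) that this projection lies in $K_{z(\theta\circ\psi)}\ominus(1-\overline{a}z)K_{\theta\circ\psi}$, and then evaluates it by a residue computation to get $\bigl(z\theta\circ\psi-a\theta(0)\bigr)/(z-a)$; the one-dimensionality of the complement is left implicit. You instead write the candidate down and verify everything directly: membership in $K_{z(\theta\circ\psi)}$ and orthogonality to $(1-\overline{a}z)K_{\theta\circ\psi}$ by the boundary identities $|\theta\circ\psi|=1$ and $1-a\bar z=(z-a)/z$, and—this is the genuinely added ingredient—an explicit codimension-one argument via Theorem \ref{thm FG} with $F=\overline{z\theta}$, giving $C_\psi(K_{z\theta})=K_{z(\theta\circ\psi)}$ and hence $\dim\bigl(K_{z(\theta\circ\psi)}\ominus C_\psi(K_\theta)\bigr)=1$ from $K_{z\theta}=K_\theta\oplus\mathbb{C}\theta$. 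What the paper's route buys is brevity and a constructive derivation of the vector (the residue calculus does the work); what yours buys is a self-contained justification of the dimension count the paper glosses over, plus a clean direct verification of the second equality through the reproducing kernel of $K_\psi$ and the relation $\psi(z)(1-\bar a z)=\lambda(a-z)$, which the paper states without comment. Both arguments are sound, and your version fills in precisely the steps the paper labels ``easy to check.''
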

\begin{proof}
It is easy to check $$T_{\frac{1}{z-a}}(z(\theta\circ\psi))\in K_{z(\theta\circ \psi)}\ominus (1-\overline{a}z)K_{\theta\circ\psi}=K_{z(\theta\circ \psi)}\ominus C_\psi (K_{\theta})$$ by \eqref{CbK}. Since $T_{\frac{1}{z-a}}(z(\theta\circ\psi))$ is the orthogonal projection of $ z(\theta\circ\psi)/(z-a)$ on $H^2(\mathbb{D}),$  by using the theory of residues the desired result follows.\end{proof}

Here is an example to illustrate the results above.

\begin{exm}  Let  $\theta(z) = b(z) =(a-z)/(1-\overline{a}z)$ with $a\in \mathbb{D}\setminus\{0\}$. That is $\theta(0)=a\neq 0.$  Then $K_b$ is spanned by the reproducing kernel $1/(1-\overline{a}z)$. Now the subspace $C_b(K_b)$ is spanned
by $$\frac{1}{1-\overline{a}b(z)}=\frac{1-\overline{a}z}{1-|a|^2}.$$ This means $C_b(K_b)=(1-\bar{a}z)K_z$ as  shown in \eqref{CbK}. Corollary \ref{cor model case} also confirms that it is the kernel of $T_{(\overline{b}\circ b)b/z}= T_{b/z^2}$, although it is not itself a model space. Moreover, Proposition \ref{prop sub} implies
$$K_{z^2}\ominus C_b(K_b)= \mathbb{C}(z+a).$$
\end{exm}

These results are   summarized in a Hitt-like formula for nearly $T_\psi^{-1}$-invariant subspaces.
\begin{thm}\label{thm Tpsi} Let $\psi$ be an automorphism with zero $a\in \mathbb{D}$, then a space is nearly $T_{\psi}^{-1}$-invariant if and only if it behaves as $C_{\psi}(uK_\theta)=(u\circ \psi)\Ker T_{(\overline{\theta}\circ \psi)\psi/z}$, where $\theta$ is inner, $u(0)\neq 0$ and the multiplication by $u$ is isometric from $K_\theta$ to $H^2.$ Furthermore, it has the following specific form
\begin{align*} \left\{
                                  \begin{array}{ll}
   (u\circ \psi)\Ker T_{(\overline{\theta}\circ \psi)\psi/z}=(u\circ\psi) K_{z(\theta\circ \psi)/\psi}, & \theta(0)=0, \vspace{2mm}\\
        (u\circ \psi)\Ker T_{(\overline{\theta}\circ \psi)\psi/z}=(1-\overline{a}z)(u\circ \psi) K_{ \theta\circ \psi}, & \theta(0)\neq 0.
                  \end{array}
         \right.\end{align*}
  \end{thm}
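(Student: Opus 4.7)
The plan is to exploit the similarity relation $T_\psi = C_\psi T_z C_\psi^{-1}$ from \eqref{unitary1} to pull Hitt's theorem (Theorem \ref{thm Hitt}) across to $T_\psi^{-1}$, and then to invoke the structural identifications already established in the paper (Corollary \ref{cor model case}, Corollary \ref{cor equal}, together with the Remark following the latter) to unpack the two explicit forms.

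First, I would show that a closed subspace $\mathcal{M}\subseteq H^2(\mathbb{D})$ is nearly $T_\psi^{-1}$-invariant if and only if $C_\psi^{-1}\mathcal{M}$ is nearly $S^*$-invariant. Since $C_\psi$ is a bounded invertible operator on $H^2(\mathbb{D})$ with inverse $C_{\psi^{-1}}$, it carries closed subspaces to closed subspaces. Writing $T_\psi g = C_\psi T_z (C_\psi^{-1} g)$, the condition $T_\psi g\in \mathcal{M}$ is equivalent to $T_z h\in C_\psi^{-1}\mathcal{M}$ with $h = C_\psi^{-1} g$; as $g$ ranges over all of $H^2(\mathbb{D})$ iff $h$ does, the two near-invariance properties are equivalent. (The same argument, run backwards, gives the converse direction.)

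Next, applying Hitt's theorem to the closed subspace $C_\psi^{-1}\mathcal{M}$ produces an inner function $\theta$ and a function $u$ with $u(0)>0$ (hence $u(0)\neq 0$) such that multiplication by $u$ is isometric from $K_\theta$ into $H^2(\mathbb{D})$ and $C_\psi^{-1}\mathcal{M} = uK_\theta$. Applying $C_\psi$ yields
\[
\mathcal{M} = C_\psi(uK_\theta) = (u\circ\psi)\, C_\psi(K_\theta),
\]
and Corollary \ref{cor model case} rewrites $C_\psi(K_\theta)$ as $\Ker T_{(\overline{\theta}\circ\psi)\psi/z}$, which is the first stated representation; note also $(u\circ\psi)(a) = u(0) \neq 0$.

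To extract the two explicit forms, I would distinguish the two cases using the Remark after Corollary \ref{cor equal}. If $\theta(0)=0$, then $z\mid\theta$, so $\psi\mid\theta\circ\psi$ and $z(\theta\circ\psi)/\psi$ is inner; the symbol $(\overline{\theta}\circ\psi)\psi/z$ has conjugate equal to this inner function, so its kernel is the model space $K_{z(\theta\circ\psi)/\psi}$. If $\theta(0)\neq 0$, then \eqref{CbK} (itself a consequence of Corollary \ref{cor equal} and a direct computation of $\sqrt{\psi'}$) gives $C_\psi(K_\theta) = (1-\overline{a}z)K_{\theta\circ\psi}$, yielding the second formula. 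The most delicate point is the near-invariance transfer in the first step — one must be careful that the defining condition quantifies over all $g\in H^2(\mathbb{D})$ rather than $g\in\mathcal{M}$ — but once this is secured, the rest of the argument is an assembly of results already proved in the section.
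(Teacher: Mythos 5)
Your proposal is correct and takes essentially the same route as the paper, which presents Theorem \ref{thm Tpsi} without a separate proof as a summary of the similarity relation \eqref{unitary1} combined with Hitt's theorem, Corollary \ref{cor model case}, \eqref{CbK} and the remark treating the cases $\theta(0)=0$ and $\theta(0)\neq 0$. Your explicit verification that $\mathcal{M}$ is nearly $T_\psi^{-1}$-invariant if and only if $C_\psi^{-1}\mathcal{M}$ is nearly $S^*$-invariant merely spells out the transfer step the paper leaves implicit.
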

Theorem \ref{thm Tpsi} further implies the structure of nearly invariant subspaces for discrete semigroups generated by two  different automorphisms on $ \mathbb{D}$.

\begin{thm} \label{discrete}Let $\psi_1$ and $\psi_2$ be two different automorphisms with zeros $a_1\neq a_2\in \mathbb{D}$, then a space is nearly  invariant with respect to the discrete semigroup  $\{T_{\psi_1^m\psi_2^n}^{-1}:\;m,n\in \mathbb{N}_0\}$  if and only if it has the form  $ (u\circ \psi_1)\Ker T_{(\overline{\theta}\circ \psi_1)\psi_1/z} $, where $\theta$ is inner, $u$  satisfies  $u(0)\neq 0$ and $u(\psi_1(a_2))\neq 0$, with multiplication by $u$ is isometric from $K_\theta$ to $H^2.$\end{thm}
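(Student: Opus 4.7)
The plan is to reduce the problem to the single-automorphism Hitt-type characterisation already established in Theorem \ref{thm Tpsi}, together with a transfer trick using $C_{\psi_1}$, and then to pin down the single extra condition needed to upgrade nearly $T_{\psi_1}^{-1}$-invariance to joint near invariance for the semigroup.

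First, since the semigroup $\{T_{\psi_1^m\psi_2^n}^{-1}:m,n\in \NN_0\}$ is generated by $T_{\psi_1}^{-1}$ and $T_{\psi_2}^{-1}$, a closed subspace $\ma{M}\subseteq H^2(\DD)$ is nearly invariant for the semigroup if and only if it is nearly $T_{\psi_1}^{-1}$-invariant and nearly $T_{\psi_2}^{-1}$-invariant. The first condition, by Theorem \ref{thm Tpsi}, forces
\[
\ma{M}=C_{\psi_1}(uK_\theta)=(u\circ\psi_1)\Ker T_{(\overline{\theta}\circ\psi_1)\psi_1/z}
\]
for some inner $\theta$ and some $u$ with $u(0)\neq 0$ and multiplication by $u$ isometric from $K_\theta$ into $H^2$. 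So the whole problem reduces to identifying the precise extra condition on $u$ (and $\theta$) under which $\ma{M}$ is \emph{also} nearly $T_{\psi_2}^{-1}$-invariant.

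Next, I would transfer the second condition back through $C_{\psi_1^{-1}}$. Since $C_{\psi_1^{-1}}(\psi_2 g)=(\psi_2\circ\psi_1^{-1})\,C_{\psi_1^{-1}}(g)$, multiplication by $\psi_2$ on $\ma{M}$ corresponds to multiplication by $\psi_3:=\psi_2\circ\psi_1^{-1}$ on $uK_\theta$. Hence $\ma{M}$ is nearly $T_{\psi_2}^{-1}$-invariant exactly when $uK_\theta$ is nearly $T_{\psi_3}^{-1}$-invariant, and $\psi_3$ is an automorphism whose unique zero is $b:=\psi_1(a_2)$.

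The heart of the proof is then a direct near-invariance analysis of $uK_\theta$ under multiplication by $\psi_3$. I would first verify a preliminary lemma: $K_\theta$ itself is nearly $T_\psi^{-1}$-invariant for every automorphism $\psi$. This is a short computation using $K_\theta=\Ker T_{\overline{\theta}}$: if $\psi f\in K_\theta$ with $f\in H^2$, then $\overline{\theta}\psi f\in \overline{zH^2}$, and multiplying by $\overline{\psi}$ (which on $\TT$ equals $1/\psi$ and is therefore outer in the sense that $G\mapsto G\psi$ preserves $H^2$) gives $\overline{\theta}f\in\overline{zH^2}$, i.e.\ $f\in K_\theta$. Using this, suppose $\psi_3 f=uk$ for some $k\in K_\theta$ and $f\in H^2$; then $u(b)k(b)=0$. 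If $u(b)\neq 0$, then $k(b)=0$, so $k/\psi_3\in H^2$, and by the lemma applied to $\psi=\psi_3$, $k/\psi_3\in K_\theta$, giving $f=u\cdot(k/\psi_3)\in uK_\theta$. Conversely, if $u(b)=0$, then choose any $k\in K_\theta$ with $k(b)\neq 0$ (which exists whenever $K_\theta\neq\{0\}$, since the reproducing kernel at $b$ is non-zero); then $f:=uk/\psi_3\in H^2$ but $f\notin uK_\theta$, because writing $f=uk'$ forces $k'=k/\psi_3\notin H^2$.

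The expected obstacle is precisely this last step: making sure the dichotomy $u(b)\neq 0$ versus $u(b)=0$ really does cut cleanly, and in particular that in the second case one can always exhibit a witness $k\in K_\theta$ with $k(b)\neq 0$ (handled by the reproducing-kernel remark). Combining the two cases, $uK_\theta$ is nearly $T_{\psi_3}^{-1}$-invariant iff $u(b)\neq 0$, i.e.\ $u(\psi_1(a_2))\neq 0$. Together with the already-present $u(0)\neq 0$ coming from the Hitt formula, this yields exactly the conditions on $u$ in the statement, and the representation $\ma{M}=(u\circ\psi_1)\Ker T_{(\overline{\theta}\circ\psi_1)\psi_1/z}$ is inherited from Theorem \ref{thm Tpsi}.
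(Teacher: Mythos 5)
Your proposal is correct and follows essentially the same route as the paper: reduce to the single-generator characterisation of Theorem \ref{thm Tpsi}, then show near $T_{\psi_2}^{-1}$-invariance is equivalent to $u(\psi_1(a_2))\neq 0$ via the dichotomy ``evaluate at the relevant point, divide by the automorphism inside the Toeplitz kernel/model space, and in the degenerate case exhibit a witness function not vanishing there.'' The only cosmetic difference is that you conjugate by $C_{\psi_1^{-1}}$ and argue on $uK_\theta$ with $\psi_3=\psi_2\circ\psi_1^{-1}$ at $b=\psi_1(a_2)$, taking the witness from the reproducing kernel of $K_\theta$, whereas the paper argues directly on $(u\circ \psi_1)\Ker T_{(\overline{\theta}\circ \psi_1)\psi_1/z}$ with $\psi_2$ at $a_2$ and obtains its witness by dividing by a power of $\psi_2$.
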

\begin{proof}  Theorem \ref{thm Tpsi}  implies the  nearly $T_{\psi_1}^{-1}$-invariant subspace behaves as $$C_{\psi_1}(uK_\theta)=(u\circ \psi_1) \Ker T_{(\overline{\theta}\circ \psi_1)\psi_1/z},\;u(0)\neq 0,$$ and  the multiplication by $u$ is isometric from $K_\theta$ to $H^2.$

Next we show  $C_{\psi_1}(uK_\theta)$ is nearly $T_{\psi_2}^{-1}$-invariant if and only if $u(\psi_1(a_2))\neq 0$.

For sufficiency, suppose a function $f=gh\in C_{\psi_1}(uK_\theta) $ with $g=u\circ \psi_1$ and $h\in \Ker T_{(\overline{\theta}\circ \psi_1)\psi_1/z}$  satisfies $f\overline{\psi_2}\in H^2(\mathbb{D}),$ then we have $f(a_2)=u(\psi_1(a_2)) h(a_2)=0.$ Since $u(\psi_1(a_2))\neq 0$, it follows that $h(a_2)=0$ and so $h\overline{\psi_2}\in \Ker T_{(\overline{\theta}\circ \psi_1)\psi_1/z}$ due to the near invariance of Toeplitz kernels. This yields that
 $$f\overline{\psi_2}=u\circ \psi_1(a_2) h\overline{\psi_2}\in C_{\psi_1}(uK_\theta).$$

For necessity, suppose, on the contrary, that $u(\psi_1(a_2))=0,$ we can always find an $h\in \Ker T_{(\overline{\theta}\circ \psi_1)\psi_1/z}$ with $h(a_2)\neq 0,$ which can be obtained by dividing by a power of $\psi_2.$ Since $C_{\psi_1}(uK_\theta)$ is nearly $T_{\psi_2}^{-1}$-invariant, then $$(u\circ \psi_1)\overline{\psi_2}h=(u\circ \psi_1)\tilde{h}$$ with  $\tilde{h}\in  \Ker T_{(\overline{\theta}\circ \psi_1)\psi_1/z}.$ But that implies $h=\psi_2 \tilde{h}$ and so $h(a_2)=0,$ which is a contradiction.
\end{proof}
 With these at hand, we are in a position to generalize Theorem \ref{discrete}  to the semigroup generated by even infinitely many automorphisms.
\begin{rem} Let $\psi_k$ be  different automorphisms with zeros $a_k\in \mathbb{D}$, $k\in \mathbb{N}$, then a space is nearly  invariant with respect to the discrete semigroup  \[
\{\prod_{i=1}^\infty T_{\psi_i^{m_i}}^{-1}:\;m_i\in \mathbb{N}_0, m_i=0 \hbox{ except for finitely-many } i\}\]
  if and only if it has the form  $ (u\circ \psi_1)\Ker T_{(\overline{\theta}\circ \psi_1)\psi_1/z} ,$ where  $\theta$ is inner, $u$  satisfies  $u(0)\neq 0$ and $u(\psi_1(a_k))\neq 0$, $k\in \mathbb{N}$ with multiplication by $u$ is isometric from $K_\theta$ to $H^2.$ \end{rem}

\subsection{Properties of $C_\varphi(\Ker T)$ for inner functions $\varphi$ and Toeplitz kernels $\Ker T$.}

In this subsection, we take further the study of  whether Theorem \ref{thm FG} holds for  the composition operator induced by a general inner function $\varphi$ that is not an automorphism. Surprisingly, we  have the following theorem.
\begin{thm}\label{thm CphiM}
Let $\mathcal{M}$ be a subspace of $H^2(\mathbb{D})$ of dimension at least $2$, and let
$\varphi$ be an inner function that is not an automorphism on $\mathbb{D}$. Then $C_\varphi(\mathcal{M})$ is not nearly $S^*$-invariant.\end{thm}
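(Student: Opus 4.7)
The plan is to argue by contradiction: assume $C_\varphi(\mathcal{M})$ is nearly $S^*$-invariant and derive a contradiction from the hypothesis that $\varphi$ is not an automorphism. First I would produce a suitable test function. Put $a := \varphi(0)$ and note that, since $\dim \mathcal{M} \geq 2$, the point-evaluation functional $g \mapsto g(a)$ restricted to $\mathcal{M}$ has image of dimension at most $1$, so its kernel in $\mathcal{M}$ contains a nonzero element $g$. Set $f := g \circ \varphi = C_\varphi g \in C_\varphi(\mathcal{M})$. Since $\varphi$ is nonconstant, $\varphi(\mathbb{D})$ is open, and $g$ cannot vanish on an open set unless $g \equiv 0$; hence $f \not\equiv 0$, while $f(0) = g(a) = 0$.

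The near $S^*$-invariance then forces $S^* f = f(z)/z$ to lie in $C_\varphi(\mathcal{M})$, so there exists $\tilde{g} \in \mathcal{M}$ with
\[
g(\varphi(z)) \;=\; z\, \tilde{g}(\varphi(z)), \qquad z \in \mathbb{D}. \quad (\star)
\]
The decisive step is to exploit $(\star)$ together with the classical fact that a univalent inner self-map of $\mathbb{D}$ must be an automorphism (which can be obtained either from Frostman's theorem, by which a non-M\"obius inner function attains almost every value in $\mathbb{D}$ at least twice, or from a direct boundary-behaviour argument on the simply connected image $\varphi(\mathbb{D})$). Because $\varphi$ is not an automorphism by hypothesis, it fails to be injective, so there exist distinct $z_1 \neq z_2 \in \mathbb{D}$ with $\varphi(z_1) = \varphi(z_2) = w_0$. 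By the open mapping theorem applied to $\varphi$ near $z_1$ and near $z_2$, there is an open neighbourhood $U \subseteq \mathbb{D}$ of $w_0$ such that every $w \in U$ admits at least two distinct preimages $\zeta_1(w) \neq \zeta_2(w)$ under $\varphi$.

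Substituting both preimages into $(\star)$ yields
\[
\zeta_1(w)\, \tilde{g}(w) \;=\; g(w) \;=\; \zeta_2(w)\, \tilde{g}(w), \qquad w \in U,
\]
so $(\zeta_1(w) - \zeta_2(w))\, \tilde{g}(w) = 0$, and therefore $\tilde{g}$ vanishes identically on $U$, whence $g$ does as well. The identity theorem then forces $g \equiv 0$ on $\mathbb{D}$, contradicting the choice of $g$. The main obstacle I anticipate is precisely the non-injectivity step: the functional equation $(\star)$ is extracted almost tautologically from near $S^*$-invariance, but drawing a useful conclusion from it relies on $\varphi$ having coincident preimages over an open set, which is exactly what the failure of the automorphism property provides. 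The dimension hypothesis $\dim \mathcal{M} \geq 2$ enters only to guarantee the existence of the initial nonzero $g$ vanishing at $\varphi(0)$; without it, there need be no nontrivial candidate $f \in C_\varphi(\mathcal{M})$ with $f(0) = 0$ against which to test the near invariance.
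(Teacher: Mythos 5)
Your proposal is correct, and its engine is the same as the paper's: from near $S^*$-invariance you extract the relation $g\circ\varphi = z\,(\tilde g\circ\varphi)$ and evaluate it at two distinct preimages of the same point to force $\tilde g$, and hence $g$, to vanish. The difference lies in the supporting lemma and the finish. The paper first establishes that a non-automorphic inner function attains almost every value of $\mathbb{D}$ at at least two distinct points (argument principle for finite Blaschke products of degree at least $2$, Frostman's theorem for irrational inner functions), and then concludes that the composed function vanishes a.e., hence is zero. You instead use only the weaker classical fact that a univalent inner function is an automorphism, so non-automorphy gives one pair $z_1\neq z_2$ with $\varphi(z_1)=\varphi(z_2)$; the open mapping theorem (with disjoint neighbourhoods of $z_1$ and $z_2$, which you use implicitly) upgrades this to a two-preimage property on an open set, and the identity theorem finishes. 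This localisation is a genuine economy: you never need the ``almost every value'' statement, and the a.e.-vanishing step is replaced by vanishing on an open set. Be aware, though, that the univalence fact is not more elementary at bottom: for inner functions that are not finite Blaschke products its standard proof is again Frostman's theorem, exactly the tool behind the paper's lemma, so both arguments rest on the same foundation. Your choice of test function (a nonzero $g\in\mathcal{M}$ with $g(\varphi(0))=0$, available since $\dim\mathcal{M}\geq 2$) is a harmless variant of the paper's choice of a linear combination of two independent elements of $C_\varphi(\mathcal{M})$ vanishing at the origin.
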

Before the proof, we recall  from \cite[Section 2.6]{GMR}, for $\zeta\in \mathbb{D}$, a Frostman shift of an inner function $\varphi$ is defined as
$$\varphi_\zeta(z)=\frac{\zeta-\varphi(z)}{1-\overline{\zeta}\varphi(z)}.$$
It follows that the function $\varphi_\zeta$ is also an inner function for any $\zeta\in \mathbb{D}.$ Frostman's Theorem (see, e.g. \cite[p 45]{Nik}) indicates that $\varphi_\zeta$ is actually a Blaschke product for almost all values of $\zeta \in \mathbb{D}$ (in fact, except for a set
of capacity $0$). Now we can start with a lemma.

\begin{lem} \label{lem alpha} Let $\varphi$ be an inner function that is not an automorphism on $\mathbb{D}$. Then for almost all $\zeta\in \mathbb{D}$ there are distinct points $\alpha,\;\beta\in \mathbb{D}$ such that $\varphi(\alpha)=\varphi(\beta)=\zeta.$
\end{lem}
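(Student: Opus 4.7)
The plan is to exploit the Frostman shift $\varphi_\zeta = M_\zeta\circ\varphi$ recalled just before the lemma, where $M_\zeta(w)=(\zeta-w)/(1-\bar\zeta w)$ is a disc automorphism. By Frostman's theorem as quoted in the paper, there is a set $E\subset\mathbb{D}$ of capacity zero such that for every $\zeta\in\mathbb{D}\setminus E$, $\varphi_\zeta$ is a Blaschke product. Since $\varphi_\zeta(z)=0$ exactly when $\varphi(z)=\zeta$, the zeros of $\varphi_\zeta$ in $\mathbb{D}$ are precisely the preimages of $\zeta$ under $\varphi$. Thus the lemma reduces to showing that $\varphi_\zeta$ has at least two \emph{distinct} zeros for almost every $\zeta$.

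First I would rule out $\varphi_\zeta$ being a single Blaschke factor (or a unimodular constant). A direct check shows that $M_\zeta$ is an involution, $M_\zeta\circ M_\zeta=\mathrm{id}$, so $\varphi_\zeta=M_\zeta\circ\varphi$ gives $\varphi=M_\zeta\circ\varphi_\zeta$. If $\varphi_\zeta$ were a degree-one Blaschke product, i.e.\ a disc automorphism, then $\varphi$ would be the composition of two automorphisms, hence itself an automorphism, contradicting the hypothesis; and if $\varphi_\zeta$ were a unimodular constant, then $\varphi$ would be constant, again a contradiction since $\varphi$ is a non-constant inner function. Consequently, for every $\zeta\in\mathbb{D}\setminus E$, $\varphi_\zeta$ is a Blaschke product of degree at least $2$, and so has at least two zeros in $\mathbb{D}$ counted with multiplicity.

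To upgrade this to two \emph{distinct} zeros I would discard the critical values of $\varphi$. Since $\varphi$ is non-constant, $\varphi'$ is a non-identically-zero holomorphic function on $\mathbb{D}$, so its zero set $C=\{z\in\mathbb{D}:\varphi'(z)=0\}$ is at most countable, and hence so is its image $\varphi(C)$. By the chain rule $\varphi_\zeta'(z)=M_\zeta'(\varphi(z))\varphi'(z)$, and $M_\zeta'$ has no zeros in $\mathbb{D}$; therefore a zero of $\varphi_\zeta$ of multiplicity at least $2$ at some point $z_0$ forces $z_0\in C$ and hence $\zeta\in\varphi(C)$. Outside the exceptional set $E\cup\varphi(C)$, which is still of capacity zero since a countable set has capacity zero, every zero of $\varphi_\zeta$ is simple, while the degree remains at least $2$. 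This produces two distinct points $\alpha,\beta\in\mathbb{D}$ with $\varphi(\alpha)=\varphi(\beta)=\zeta$, which is what is claimed. The only delicate point is the bookkeeping of exceptional sets; the structural input is just Frostman's theorem together with the involutive nature of $M_\zeta$.
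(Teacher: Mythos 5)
Your proof is correct, and it is organised differently from the paper's. The paper argues by cases: when $\varphi$ is a finite Blaschke product of degree $n\geq 2$ it invokes the argument principle (the image of $\mathbb{T}$ winds $n$ times around $\mathbb{T}$) to see that $\varphi$ is $n$-to-$1$ counting multiplicity, and then discards the finitely many critical values to get $n$ distinct preimages; when $\varphi$ is not a finite Blaschke product it uses Frostman's theorem, noting that $\varphi_\zeta$ is then an infinite Blaschke product, whose zeros are automatically infinitely many and distinct, so no critical-value argument is needed there. You instead treat all inner functions uniformly through the Frostman shift: the involution identity $M_\zeta\circ M_\zeta=\mathrm{id}$ rules out $\varphi_\zeta$ being constant or a single Blaschke factor (this replaces the paper's winding-number count in the finite case), and then the chain rule $\varphi_\zeta'=\bigl(M_\zeta'\circ\varphi\bigr)\varphi'$ with $M_\zeta'$ zero-free lets you exclude the countable set $\varphi(C)$ of critical values so that all zeros of $\varphi_\zeta$ are simple; since a countable set has capacity (hence area measure) zero, the exceptional set stays negligible. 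What each approach buys: the paper's case split yields slightly more information ($n$ distinct preimages in the finite case, infinitely many in the irrational case), while your argument avoids both the case distinction and the argument principle and delivers exactly the two distinct preimages the lemma requires. One shared convention to note: both your proof and the paper's implicitly assume $\varphi$ is non-constant (a unimodular constant is formally inner and not an automorphism, but is excluded by convention here, and indeed in the application $C_\varphi$ is injective only for non-constant $\varphi$), so your appeal to non-constancy is not an additional gap.
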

\begin{proof}
On the one hand, if $\varphi$ is a finite Blaschke product of degree $n\geq 2$ then  $\varphi$ gives an $n$-to-$1$
mapping of $\mathbb{D}$ onto $\mathbb{D}$ (counting multiplicities), which follows
easily from the argument principle as $\varphi(\mathbb{T})$ winds round $\mathbb{T}$ $n$ times. Now if
we exclude the images of the finitely-many points at which $\varphi'=0$  then each
remaining point  $\zeta$ has $n$ distinct preimages under $\varphi.$

On the other hand, if $\varphi$ is an irrational inner function then by Frostman's theorem $\varphi_\zeta(z)=(\zeta-\varphi(z))/(1-\overline{\zeta}\varphi(z))$
is a Blaschke product for almost all $\zeta\in \mathbb{D}$, in which case $\varphi_\zeta(z)=0$ has infinitely many distinct solutions
and so $\varphi(z) =\zeta$ has infinitely many distinct solutions. In sum, there are always  $\alpha\neq \beta\in \mathbb{D}$ such that $\varphi(\alpha)=\varphi(\beta)=\zeta$ for almost all $\zeta \in \mathbb{D}.$ \end{proof}

Now we can proceed with the proof of Theorem \ref{thm CphiM}.
\begin{proof} Since $C_\varphi$ is injective, it follows that  ${\rm dim}\; C_\varphi(\mathcal{M}) \geq 2$, and thus by taking
a nontrivial linear combination of two independent functions in $C_\varphi(\mathcal{M})$, it
contains a function $C_\varphi f$ such that $f \neq 0$ and $(C_\varphi f)(0) = 0.$
Suppose, to get a contradiction, that $C_\varphi(\mathcal{M})$ is nearly $S^*$-invariant. Then
there exists a $g\in  \mathcal{M}$ such that $$(C_\varphi f)(z) = z(C_\varphi g)(z).$$  Now, by Lemma \ref{lem alpha}, for almost all $\zeta\in\mathbb{D}$, there exist $\alpha\neq \beta$ in $\mathbb{D}$ such that $\varphi(\alpha)=\varphi(\beta)=\zeta$. In that case,
$$f(\zeta) = f(\varphi(\alpha)) = \alpha g(\varphi(\alpha)) = \alpha g(\zeta)$$
and similarly $ f(\zeta) = \beta g(\zeta),$ so  $\alpha g(\zeta) = \beta g(\zeta)$. Since $\alpha\neq \beta$ we conclude that
$g(\zeta) = 0$ and so $ f(\zeta) = 0.$ This means   $f(z) = 0$ a.e. for $z\in \mathbb{D}$. Since an $H^2$ function that vanishes a.e. is the zero function (since its Bergman norm is $0$, so its Taylor coefficients are all $0$).  This contradiction proves the theorem. \end{proof}

Since Toeplitz kernels, and in particular model spaces, are always nearly $S^*$-invariant we have the following obvious consequence.

\begin{cor}\label{cor TM} Let $\varphi$ be an inner function that is not an automorphism on $\mathbb{D}$. Then $C_\varphi(\Ker T_F)$  is not a Toeplitz kernel for any $F\in L^\infty(\mathbb{T})$  such that dim $\Ker T_F\geq  2$, and $C_\varphi(K_\theta)$ is not a model space for any model space $K_\theta$ with dim $K_\theta\geq 2.$\end{cor}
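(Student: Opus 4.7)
The plan is to obtain both assertions as direct consequences of Theorem \ref{thm CphiM}. The key preliminary fact is the one explicitly flagged in the sentence preceding the corollary: every Toeplitz kernel $\Ker T_F$ is nearly $S^*$-invariant, and model spaces $K_\theta = \Ker T_{\bar\theta}$ are the special case $F = \bar\theta$. I would record the short verification of this fact for completeness: if $f \in \Ker T_F$ and $f(0)=0$, then $Ff \in \overline{zH^2}$, so on $\mathbb{T}$ one has $F \cdot S^*f = (Ff)\bar z \in \overline{z^2H^2} \subset \overline{zH^2}$, giving $T_F(S^*f) = 0$ and hence $S^*f \in \Ker T_F$.

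With this in hand, I would argue by contradiction. Suppose that $C_\varphi(\Ker T_F) = \Ker T_G$ for some $G \in L^\infty(\mathbb{T})$. The composition operator $C_\varphi$ is injective on $H^2(\mathbb{D})$ (if $f\circ \varphi \equiv 0$ then $f$ vanishes on the open set $\varphi(\mathbb{D})$, so $f \equiv 0$ by the identity theorem), and therefore $\dim C_\varphi(\Ker T_F) \geq 2$. Theorem \ref{thm CphiM} applied to $\ma{M} = \Ker T_F$ then forces $C_\varphi(\Ker T_F)$ to fail near $S^*$-invariance, which contradicts the near $S^*$-invariance of $\Ker T_G$ recorded above. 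The model space statement is just the specialisation with $F = \bar\theta$ and $T_G$ chosen so that $\Ker T_G = K_\eta$ for the inner function $\eta$ in question.

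There is essentially no obstacle: the corollary is a one-line contrapositive from Theorem \ref{thm CphiM}, and all the real work has already been done there. The only ingredients that need to be invoked, rather than proved, are the standard near $S^*$-invariance of Toeplitz kernels and the injectivity of $C_\varphi$ on $H^2(\mathbb{D})$; both are classical and can be dispatched in a sentence each.
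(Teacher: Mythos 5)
Your proposal is correct and follows exactly the paper's route: the corollary is deduced from Theorem \ref{thm CphiM} together with the near $S^*$-invariance of Toeplitz kernels (with model spaces as the case $F=\overline{\theta}$), which is precisely the one-line argument the paper gives. Your added verifications (near $S^*$-invariance of $\Ker T_F$, injectivity of $C_\varphi$) are fine but not needed beyond what the theorem already supplies, since its hypothesis concerns $\dim\Ker T_F\geq 2$ directly.
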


Finally, summarizing from  Theorem \ref{thm FG} and Theorem \ref{thm CphiM}, we end this section with near $S^*$-invariance of   $C_\varphi(\Ker T_F)$.
\begin{thm}   Let $F\in L^\infty(\mathbb{T})$  be such that dim $\Ker T_F\geq 2$ and $\varphi$  be an inner function,  then $C_\varphi(\Ker T_F)$ is nearly $S^*$-invariant if and only if $\varphi$ is an automorphism.  \end{thm}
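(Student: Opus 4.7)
The statement is an \emph{if and only if} characterization, so the plan is to break it into two implications and recognize that each follows almost immediately from one of the earlier results in the section.

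For the forward (``if'') direction, suppose $\varphi$ is an automorphism. I would invoke Theorem \ref{thm FG} directly: it gives $C_\varphi(\Ker T_F) = \Ker T_G$ where $G = (F\circ\varphi)\varphi/z$. Thus $C_\varphi(\Ker T_F)$ is itself a Toeplitz kernel. It then suffices to cite (or record in a line) the standard fact that every Toeplitz kernel is nearly $S^*$-invariant, which the paper has already used implicitly; if $f \in \Ker T_G$ with $f(0)=0$, writing $f = zS^*f$ and unpacking $Gf \in \overline{zH^2}$ gives $G(S^*f) \in \overline{zH^2}$ at once.

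For the reverse (``only if'') direction, I would argue by contrapositive. Assume $\varphi$ is not an automorphism. Since $\Ker T_F$ is a subspace of $H^2(\mathbb{D})$ of dimension at least $2$, Theorem \ref{thm CphiM} applies with $\mathcal{M} = \Ker T_F$ and yields that $C_\varphi(\Ker T_F)$ fails to be nearly $S^*$-invariant. This is precisely the contrapositive of the desired implication.

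Thus the entire argument is a short two-line synthesis of Theorem \ref{thm FG} (together with the observation that Toeplitz kernels are nearly $S^*$-invariant) and Theorem \ref{thm CphiM}. There is no genuine obstacle: the real work has been done in establishing those two theorems; the present result packages them as a crisp characterization. The only small care needed is to ensure the hypothesis $\dim \Ker T_F \geq 2$ is in force before invoking Theorem \ref{thm CphiM}, which is automatic since it is assumed in the statement.
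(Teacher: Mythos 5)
Your proposal is correct and follows essentially the same route as the paper, which obtains this theorem precisely by combining Theorem \ref{thm FG} (giving a Toeplitz kernel, hence near $S^*$-invariance, when $\varphi$ is an automorphism) with Theorem \ref{thm CphiM} (ruling out near invariance when $\varphi$ is inner but not an automorphism, using $\dim\Ker T_F\geq 2$). Your extra one-line verification that Toeplitz kernels are nearly $S^*$-invariant is also the fact the paper invokes in Corollary \ref{cor TM}.
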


\section{the subspace  $N(1/(1+s)^{n+1})$ in $H^2(\mathbb{C}_+)$}
In this section, we begin to consider  Question (2) by analysing an example which will
be fundamental in describing the general case. To be specific, we creatively employ the reproducing kernel, minimal Toeplitz kernel and model space to reformulate a classical type of cyclic nearly invariant subspace in Hardy spaces over $\mathbb{D}$ and $\mathbb{C_+}.$ Following our recent paper \cite{LP3},  we also denote the smallest (cyclic) nearly $\{S(t)^*\}_{t\geq0}$ invariant subspace containing some nonzero vector $f$ by $[f]_s$. We have proved the initial proposition for $f=e_\delta$ as below.

\begin{prop}\cite[Proposition 2.1]{LP3}\label{prop L2} In $L^2(0,\infty),$ the smallest nearly $\{S(t)^*\}_{t\geq 0}$ invariant subspace containing $e_\delta(\zeta):=e^{-\zeta}\chi_{(\delta,\infty)}(\zeta)$ with some $\delta>0$ has the form $$[e_\delta]_{s}:=\bigvee\{e_\lambda:\;0\leq\lambda\leq \delta\}= L^2(0,\delta) +\mathbb{C}e^{-\zeta}.$$
\end{prop}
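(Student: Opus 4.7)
The strategy is to establish the two equalities separately. First, since $e_0 - e_\lambda = e^{-\zeta}\chi_{(0,\lambda)}$ and multiplication by $e^{-\zeta}$ is a bounded invertible operator on $L^2(0,\delta)$ (being bounded above and bounded away from zero there), the family $\{e_0 - e_\lambda : 0 \leq \lambda \leq \delta\}$ has closed linear span equal to $L^2(0,\delta)$, using that the characteristic functions $\chi_{(0,\lambda)}$ already have dense span in $L^2(0,\delta)$. Adding $e_0 = e^{-\zeta}$ then yields $\bigvee\{e_\lambda : 0 \leq \lambda \leq \delta\} = L^2(0,\delta) + \mathbb{C}e^{-\zeta}$; call this subspace $\mathcal{M}$.

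Next I would show $[e_\delta]_s \supseteq \mathcal{M}$. The key observation is the pullback identity: for $0 < t \leq \delta$, the function $f_t(\zeta) := e_\delta(\zeta+t) = e^{-t} e_{\delta-t}(\zeta)$ satisfies $S(t) f_t = e_\delta$, since $e_\delta$ vanishes on $(0,\delta) \supseteq (0,t)$. Thus, for any closed nearly $\{S(t)^*\}$-invariant subspace $\mathcal{N}$ containing $e_\delta$, the defining property applied to $f_t$ forces $f_t \in \mathcal{N}$, and hence $e_{\delta-t} \in \mathcal{N}$, for every $t \in (0,\delta]$. Closedness of $\mathcal{N}$ then gives $\mathcal{N} \supseteq \bigvee\{e_\lambda : 0 \leq \lambda \leq \delta\} = \mathcal{M}$; in particular $[e_\delta]_s \supseteq \mathcal{M}$.

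Finally I would prove that $\mathcal{M}$ is itself nearly $\{S(t)^*\}$-invariant, which together with $e_\delta \in \mathcal{M}$ yields $[e_\delta]_s \subseteq \mathcal{M}$ and completes the argument. Take $g = h + c e^{-\zeta} \in \mathcal{M}$ with $h \in L^2(0,\delta)$ and $c \in \mathbb{C}$, and suppose $g = S(t) f$ for some $t > 0$; equivalently, $g$ vanishes on $(0,t)$ and $f(\zeta) = g(\zeta+t)$. If $t > \delta$, vanishing of $g$ on $(\delta,t)$ forces $c = 0$, and vanishing on $(0,\delta)$ then forces $h = 0$, so $f = 0 \in \mathcal{M}$. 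If $t \leq \delta$, vanishing of $g$ on $(0,t)$ gives $h(\zeta) = -c e^{-\zeta}$ there, and a short computation produces
\[
f(\zeta) = h(\zeta+t)\chi_{(0,\delta-t)}(\zeta) + c e^{-t} e^{-\zeta},
\]
with the first term in $L^2(0,\delta)$ and the second in $\mathbb{C} e^{-\zeta}$, so $f \in \mathcal{M}$. The main obstacle is purely bookkeeping---keeping the cases $t \leq \delta$ and $t > \delta$ straight and ensuring each summand of $f$ lands in the intended subspace---but once the pullback identity $S(t)^* e_\lambda = e^{-t} e_{\lambda-t}$ (valid for $t \leq \lambda$) is in hand, the calculations are routine.
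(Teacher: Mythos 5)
Your proposal is correct: the identification of the closed span via $e_0-e_\lambda=e^{-\zeta}\chi_{(0,\lambda)}$, the pullback $S(t)\bigl(e^{-t}e_{\delta-t}\bigr)=e_\delta$ forcing every $e_{\delta-t}$ into any nearly invariant subspace containing $e_\delta$, and the two-case verification that $L^2(0,\delta)+\mathbb{C}e^{-\zeta}$ is itself nearly $\{S(t)^*\}_{t\geq 0}$ invariant together give exactly the two asserted equalities. The paper states this proposition as a citation of \cite[Proposition 2.1]{LP3} without reproducing a proof, and your argument is the natural self-contained one along the same lines as the cited source; the only point worth making explicit is that $L^2(0,\delta)+\mathbb{C}e^{-\zeta}$ is closed (a closed subspace plus a finite-dimensional one), which you use implicitly in both directions.
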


We shall use  the following notation:  given $\delta>0$ and $g \in H^2(\mathbb{C}_+)$, let $N(g)$ denote the smallest closed subspace in $H^2(\mathbb{C}_+)$ containing all $g(s)e^{-\lambda s}$ for $0 \leq \lambda \leq \delta$. That is, $$N(g):=\bigvee\{ge^{-\lambda s}:\;0\leq \lambda\leq \delta\}.$$
Similarly, given $h\in H^2(\mathbb{D}),$ the smallest closed subspace in $H^2(\mathbb{D})$ containing all $h(z)\phi^{\lambda }(z)$ for $0 \leq \lambda \leq \delta$ is denoted by $$A(h):=\bigvee\{h\phi^\lambda:\;0\leq \lambda\leq \delta\},$$ where $\phi^\lambda(z)=\exp(-\lambda(1-z)/(1+z))$.

Two corollaries follow on Hardy spaces over $\mathbb{C_+}$ and $\mathbb{D}$.

 \begin{cor}\cite[Corollary 2.3]{LP3}\label{cor half} In $H^2(\mathbb{C}_+),$ the Laplace transform of $[e_\delta]_{s}$ is $$\mathcal{L}([e_\delta]_s)=N\left(\frac{1}{1+s}\right)=K_{\frac{1-s}{1+s}e^{-\delta s}},$$  where $K_{\frac{1-s}{1+s}e^{-\delta s}}$ is a  model space in $H^2(\mathbb{C}_+)$.
\end{cor}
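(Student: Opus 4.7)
The plan is to reduce everything to Proposition \ref{prop L2}, which gives the explicit decomposition $[e_\delta]_s = L^2(0,\delta) + \mathbb{C}\,e^{-\zeta}$, and then transport it through the Laplace transform using the intertwining relation $\mathcal{L}S(t)=M(t)\mathcal{L}$ from the commutative diagram. First I would compute each generator directly:
$$(\mathcal{L}e_\lambda)(s)=\int_\lambda^\infty e^{-(1+s)\zeta}\,d\zeta=\frac{e^{-\lambda}\,e^{-\lambda s}}{1+s},\qquad 0\le\lambda\le\delta,$$
so that $\mathcal{L}(e_\lambda)$ is a nonzero scalar multiple of $e^{-\lambda s}/(1+s)$. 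Since $\mathcal{L}$ is a bounded isomorphism onto $H^2(\mathbb{C}_+)$, the closed linear span of these images is exactly $N(1/(1+s))$, which already yields the first equality $\mathcal{L}([e_\delta]_s)=N(1/(1+s))$.

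For the identification with the model space, I would next rewrite $\mathcal{L}([e_\delta]_s)$ using the additive decomposition from Proposition \ref{prop L2}. Because $S(\delta)L^2(0,\infty)=L^2(\delta,\infty)$, the intertwining relation gives $\mathcal{L}(L^2(\delta,\infty))=e^{-\delta s}H^2(\mathbb{C}_+)$, and taking orthogonal complements yields $\mathcal{L}(L^2(0,\delta))=K_{e^{-\delta s}}$. Combined with $\mathcal{L}(e^{-\zeta})(s)=1/(1+s)$, this gives
$$\mathcal{L}([e_\delta]_s)=K_{e^{-\delta s}}+\mathbb{C}\cdot\frac{1}{1+s}.$$
On the model space side I would invoke the standard orthogonal decomposition $K_\theta=K_{e^{-\delta s}}\oplus e^{-\delta s}K_{(1-s)/(1+s)}$ for $\theta(s)=\frac{1-s}{1+s}e^{-\delta s}$, together with the fact that $K_{(1-s)/(1+s)}$ is one-dimensional, spanned by the reproducing kernel of $H^2(\mathbb{C}_+)$ at $s=1$, which is a scalar multiple of $1/(1+s)$.

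What remains is to verify that these two expressions for the subspace coincide. The inclusion $K_{e^{-\delta s}}+\mathbb{C}\cdot\frac{1}{1+s}\subseteq K_\theta$ is immediate, since $K_{e^{-\delta s}}\subseteq K_\theta$ and $\frac{1}{1+s}\in K_{(1-s)/(1+s)}\subseteq K_\theta$. For the reverse inclusion it suffices to show that the ``extra'' generator $e^{-\delta s}/(1+s)$ of $K_\theta$ lies in the left-hand side, and this follows from the explicit identity
$$\frac{1}{1+s}-e^{-\delta}\cdot\frac{e^{-\delta s}}{1+s}=\mathcal{L}\bigl(e^{-\zeta}\chi_{(0,\delta)}\bigr)\in\mathcal{L}(L^2(0,\delta))=K_{e^{-\delta s}}.$$
The step I expect to be the main obstacle, or at least the one carrying the real content, is the Paley--Wiener-type identification $\mathcal{L}(L^2(0,\delta))=K_{e^{-\delta s}}$: it is the reason the singular inner factor $e^{-\delta s}$ enters at all, while everything else reduces to routine manipulation with the decomposition of model spaces into inner-divisor summands.
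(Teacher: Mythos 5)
Your argument is correct, but it follows a genuinely different route from the derivation the paper gives for this identity (Proposition \ref{model1}). You work in the time domain: you take the decomposition $[e_\delta]_s=L^2(0,\delta)+\mathbb{C}e^{-\zeta}$ from Proposition \ref{prop L2}, transport it by the Laplace transform using the Paley--Wiener identification $\mathcal{L}(L^2(0,\delta))=K_{e^{-\delta s}}$ (valid since $\mathcal{L}$ is a constant multiple of a unitary onto $H^2(\mathbb{C}_+)$ intertwining $S(\delta)$ with multiplication by $e^{-\delta s}$), and then match the result against the splitting $K_{\frac{1-s}{1+s}e^{-\delta s}}=K_{e^{-\delta s}}\oplus e^{-\delta s}K_{\frac{1-s}{1+s}}$ of Lemma \ref{u12}, closing the gap with the explicit identity $\frac{1}{1+s}-e^{-\delta}\frac{e^{-\delta s}}{1+s}=\mathcal{L}\bigl(e^{-\zeta}\chi_{(0,\delta)}\bigr)\in K_{e^{-\delta s}}$; all of these steps check out, including the computation $\mathcal{L}e_\lambda=e^{-\lambda}e^{-\lambda s}/(1+s)$ giving the first equality. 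The paper instead proves the statement frequency-side and intrinsically: it shows $N(1/(1+s))$ is invariant under the adjoint multiplication semigroup $\{M(t)^*\}_{t\ge 0}$ by computing the projection of $e^{\alpha s}e^{-\lambda s}/(1+s)$ with the reproducing kernel of $H^2(\mathbb{C}_+)$, concludes it is a model space, and then identifies which one via the minimal Toeplitz kernel $K_{\min}\bigl(e^{-\delta s}/(1+s)\bigr)=K_{\frac{1-s}{1+s}e^{-\delta s}}$ from Lemma \ref{minkernel}. Your approach buys concreteness and avoids Toeplitz-kernel machinery entirely, but it leans on the cited time-domain description of $[e_\delta]_s$ and on an explicit algebraic coincidence; the paper's invariance-plus-minimal-kernel strategy is what generalizes directly to $N(1/(1+s)^{n+1})$ in Theorem \ref{thm modelway} and to the later rational cases, which is why it is the method of choice there.
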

\begin{cor}\cite[Corollary 2.4]{LP3}\label{cor phidelta} In $H^2(\mathbb{D}),$ it holds that $$A(1)=K_{z \phi^{\delta}}.$$\end{cor}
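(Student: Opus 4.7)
The plan is to transfer the identity from $H^2(\mathbb{D})$ to $H^2(\mathbb{C}_+)$ via the isometric isomorphism $V:H^2(\mathbb{D})\to H^2(\mathbb{C}_+)$ coming from \eqref{V-map}, then invoke Corollary \ref{cor half}. Writing $\sigma(z)=(1-z)/(1+z)$, one reads off from \eqref{V-map} that $(Vh)(s)=\frac{1}{\sqrt{\pi}(1+s)}h(\sigma(s))$, since $\sigma$ is an involution. The first step is the explicit computation
\begin{equation*}
(V\phi^\lambda)(s)=\frac{1}{\sqrt{\pi}(1+s)}\exp\!\left(-\lambda\frac{1-\sigma(s)}{1+\sigma(s)}\right)=\frac{e^{-\lambda s}}{\sqrt{\pi}(1+s)},
\end{equation*}
using the elementary identity $(1-\sigma(s))/(1+\sigma(s))=s$. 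Because $V$ is unitary, it maps closed linear spans to closed linear spans, so
\begin{equation*}
V(A(1))=\bigvee\Bigl\{\tfrac{e^{-\lambda s}}{1+s}:\;0\le\lambda\le\delta\Bigr\}=N\!\left(\tfrac{1}{1+s}\right),
\end{equation*}
the scalar factor $1/\sqrt{\pi}$ being harmless. Corollary \ref{cor half} then gives $V(A(1))=K_{\frac{1-s}{1+s}e^{-\delta s}}$.

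The second step is to push this model space back to the disc. I would verify the standard intertwining: for any inner function $\Theta$ on $\mathbb{C}_+$, setting $\tilde{\Theta}:=\Theta\circ\sigma$ (an inner function on $\mathbb{D}$), one has for every $h\in H^2(\mathbb{D})$
\begin{equation*}
V(\tilde{\Theta}\,h)(s)=\frac{\Theta(\sigma(\sigma(s)))}{\sqrt{\pi}(1+s)}h(\sigma(s))=\Theta(s)(Vh)(s).
\end{equation*}
Thus $V$ unitarily carries $\tilde{\Theta}H^2(\mathbb{D})$ onto $\Theta H^2(\mathbb{C}_+)$, and consequently $V(K_{\tilde{\Theta}})=K_{\Theta}$. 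Specializing to $\Theta(s)=\frac{1-s}{1+s}e^{-\delta s}$, the identities $\frac{1-\sigma(z)}{1+\sigma(z)}=z$ and $e^{-\delta\sigma(z)}=\phi^\delta(z)$ give $\tilde{\Theta}(z)=z\phi^\delta(z)$. Combining with the previous step yields $A(1)=V^{-1}(K_{\frac{1-s}{1+s}e^{-\delta s}})=K_{z\phi^\delta}$.

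There is no real obstacle: the only delicate point is the intertwining of model spaces under $V$, which requires recording the simple computation above. Everything else is a direct unitary transfer of Corollary \ref{cor half}.
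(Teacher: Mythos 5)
Your argument is correct: the computation $(V\phi^{\lambda})(s)=e^{-\lambda s}/(\sqrt{\pi}(1+s))$, the identification $V(A(1))=N(1/(1+s))$, and the intertwining $V(K_{\Theta\circ\sigma})=K_{\Theta}$ (hence $K_{z\phi^{\delta}}=V^{-1}K_{\frac{1-s}{1+s}e^{-\delta s}}$) all check out, so Corollary \ref{cor half} transfers exactly as you describe. This is essentially the paper's own route: the statement is quoted from \cite{LP3}, where it is obtained by the same unitary transfer under the map in \eqref{V-map} from the half-plane result, a mechanism this paper also uses explicitly, e.g.\ in Example \ref{s+2n}.
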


After that, Proposition  \ref{prop L2} was  generalized into more general    $[f_{\delta,n}(\zeta)]_s$ in $L^2(0,\infty)$ with \begin{align*}f_{\delta,n}(\zeta)=\frac{(\zeta-\delta)^n}{n!}
e_\delta(\zeta),\;n\in \mathbb{N}_0.\end{align*}
By the Laplace transform $\ma{L} $ in \eqref{V-map}, it follows that $\ma{L}([f_{\delta,n}(\zeta)]_s)=N(1/(1+s)^{n+1})$. And the corresponding (cyclic) nearly invariant subspaces in Hardy spaces are presented in the following theorem, which is  the generalization of Corollaries   \ref{cor half} and \ref{cor phidelta}.

\begin{thm}\cite[Theorem 3.5]{LP3}\label{thm n} For any $n\in \NN_0$ and $\delta>0$, the following statements are true.

$(1)$ In $H^2(\mathbb{C}_+),$ it holds that the cyclic nearly $\{M(t)^*\}_{t\geq 0}$ invariant subspace $$N\left(\frac{1}{(1+s)^{n+1}}\right)=K_{ \left(\frac{1-s}{1+s}\right)^{n+1}e^{-\delta s}};$$

$(2)$ In $H^2(\mathbb{D}),$ it holds that the cyclic nearly $\{T(t)^*\}_{t\geq 0}$ invariant subspace $$A((1+z)^n)=K_{z^{n+1}\phi^\delta}.$$
\end{thm}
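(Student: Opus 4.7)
The plan is to deduce both parts from the observation that they are equivalent via the isometric isomorphism $V:H^2(\mathbb{D})\to H^2(\mathbb{C}_+)$ of \eqref{V-map}. A direct substitution using $z=(1-s)/(1+s)$ yields
\[
V((1+z)^n\phi^\lambda)(s)=\frac{2^n}{\sqrt{\pi}}\cdot\frac{e^{-\lambda s}}{(1+s)^{n+1}},
\]
so $V$ maps each generator of $A((1+z)^n)$ to a scalar multiple of a generator of $N(1/(1+s)^{n+1})$; and since $V$ intertwines multiplication by the inner function $z^{n+1}\phi^\delta$ with multiplication by $((1-s)/(1+s))^{n+1}e^{-\delta s}$, it carries $K_{z^{n+1}\phi^\delta}$ unitarily onto $K_{((1-s)/(1+s))^{n+1}e^{-\delta s}}$. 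Thus (1) and (2) are equivalent and it suffices to prove (1).

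I would then pull back $N(1/(1+s)^{n+1})$ to $L^2(0,\infty)$ via $\mathcal{L}^{-1}$. Since $\mathcal{L}^{-1}(e^{-\lambda s}/(1+s)^{n+1})(\zeta)=e^\lambda f_{\lambda,n}(\zeta)$ with $f_{\lambda,n}(\zeta)=(\zeta-\lambda)^n e^{-\zeta}\chi_{(\lambda,\infty)}(\zeta)/n!$, the target to identify becomes $\bigvee\{f_{\lambda,n}:0\leq\lambda\leq\delta\}\subset L^2(0,\infty)$. I will show this equals the orthogonal direct sum $L^2(0,\delta)\oplus W_n$, where $W_n=\mathrm{span}\{\zeta^k e^{-\zeta}\chi_{(\delta,\infty)}:0\leq k\leq n\}$. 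The inclusion $\subseteq$ follows by splitting each $f_{\lambda,n}$ over $(0,\delta)$ and $(\delta,\infty)$. For the reverse inclusion, the key point is that $\lambda\mapsto f_{\lambda,n}$ is $C^n$ in $L^2$-norm for $n\geq 1$, with $\partial_\lambda^k f_{\lambda,n}=(-1)^k f_{\lambda,n-k}$ (the distributional jump of $\chi_{(\lambda,\infty)}$ is killed by the order-$n$ vanishing of $(\zeta-\lambda)^n$ at $\zeta=\lambda$). Iterated $L^2$-limits of finite difference quotients therefore place $e_\lambda=(-1)^n\partial_\lambda^n f_{\lambda,n}$ in the closed span for every $\lambda\in[0,\delta]$, and Proposition \ref{prop L2} delivers $L^2(0,\delta)\subseteq\bigvee\{f_{\lambda,n}\}$. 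Subtracting the $L^2(0,\delta)$-restrictions from $n+1$ generators $f_{\lambda_i,n}$ with distinct $\lambda_0,\ldots,\lambda_n$ yields $n+1$ linearly independent functions $(\zeta-\lambda_i)^n e^{-\zeta}\chi_{(\delta,\infty)}/n!$ (by a Vandermonde argument), spanning $W_n$.

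To finish, I match this with the model-space side via the standard decomposition $K_{\theta_1\theta_2}=K_{\theta_2}\oplus\theta_2 K_{\theta_1}$ applied to $\theta_1=((1-s)/(1+s))^{n+1}$ and $\theta_2=e^{-\delta s}$: the Paley--Wiener theorem gives $\mathcal{L}(L^2(0,\delta))=K_{e^{-\delta s}}$, and a direct Laplace computation with substitution $u=\zeta-\delta$ shows
\[
\mathcal{L}(\zeta^k e^{-\zeta}\chi_{(\delta,\infty)})(s)=e^{-\delta(1+s)}\sum_{j=0}^k\binom{k}{j}\delta^{k-j}\frac{j!}{(1+s)^{j+1}},
\]
so $\mathcal{L}(W_n)=e^{-\delta s}\cdot\mathrm{span}\{1/(1+s)^k:1\leq k\leq n+1\}=e^{-\delta s}K_{((1-s)/(1+s))^{n+1}}$ by matching dimensions. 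This completes (1); part (2) then follows by applying $V^{-1}$.

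The main obstacle is the density step, specifically the justification that the iterated $\lambda$-derivatives $\partial_\lambda^k f_{\lambda,n}$ genuinely lie in the $L^2$-closed span of the generators. This requires $n\geq 1$ so that the distributional jump of $\chi_{(\lambda,\infty)}$ is annihilated by the $n$-fold vanishing of $(\zeta-\lambda)^n$; the case $n=0$ admits no such differentiation and must be handled separately, which is precisely the role of Proposition \ref{prop L2} as the base case of this analysis.
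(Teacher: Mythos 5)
Your proposal is correct, but it follows a genuinely different route from the derivation given in this paper. Here part (1) is obtained entirely on the frequency side (Proposition \ref{model1} and Theorem \ref{thm modelway}): one first proves that $N(1/(1+s)^{n+1})$ is invariant under $\{M(t)^*\}_{t\geq 0}$ by computing the projections $P_{H^2(\CC_+)}\bigl[e^{\alpha s}e^{-\lambda s}/(1+s)^{n+1}\bigr]$ with the derivative-of-reproducing-kernel identity \eqref{fms} together with Lemma \ref{lem gs} (to capture the lower-order terms $1/(1+s)^k$), so the space is a model space; it is then pinned down as the minimal Toeplitz kernel $K_{\min}\bigl(e^{-\delta s}/(1+s)^{n+1}\bigr)$ via Lemma \ref{minkernel}, and part (2) follows by the conformal map. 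You instead work on the time side: pulling back by $\mathcal{L}^{-1}$ you identify the cyclic subspace explicitly as $L^2(0,\delta)\oplus W_n$ --- the $L^2$-valued differentiation in $\lambda$ is legitimate exactly for the reason you give (the $n$-fold zero of $(\zeta-\lambda)^n$ kills the jump of the indicator, with the $n$-th derivative handled by dominated convergence), it reduces matters to Proposition \ref{prop L2}, and the Vandermonde step recovers $W_n$ --- and then you transfer back using Paley--Wiener and the splitting $K_{\theta_1\theta_2}=K_{\theta_1}\oplus\theta_1 K_{\theta_2}$ of Lemma \ref{u12}, together with the standard identification $\mathrm{span}\{1/(1+s)^j:1\leq j\leq n+1\}=K_{((1-s)/(1+s))^{n+1}}$ (worth one line of justification via reproducing-kernel derivatives at $s=1$). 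Both arguments are complete; yours is more elementary and produces an explicit orthogonal decomposition of $N(1/(1+s)^{n+1})$ in the spirit of Proposition \ref{prop L2}, whereas the paper's route avoids time-domain computations and develops precisely the machinery (adjoint-semigroup invariance, minimal Toeplitz kernels, multipliers between kernels) that is reused later for the cases where $N(g)$ fails to be a model space, as in Proposition \ref{prop G} and Example \ref{exm invert}; your reduction of (2) to (1) via $V$ matches the paper's use of the isomorphism \eqref{V-map}.
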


It is observed the above cyclic nearly invariant subspaces all behave as model spaces, which are  invariant subspaces for backward shift semigroup $\{M(t)^*\}_{t\geq 0}$ (defined in \eqref{M*}).  Recall  from \cite[Theorem 3.1.5]{Par},  a non-zero closed subspace $\mathcal{M}\subseteq H^2(\mathbb{C}_+)$ satisfies $M(t) \mathcal{M}\subseteq \mathcal{M}$ for all $t\geq 0$ if and only if $\mathcal{M}=\phi H^2(\mathbb{C}_+)$ for some inner function $\phi\in H^\infty(\mathbb{C}_+).$ So a subspace of  $H^2(\mathbb{C}_+)$ is a model space if and only if it is invariant under  $\{M(t)^*\}_{t\geq 0}$. Next we will recall two lemmas and  provide a new derivation of  the above subspace $N(g)$  precisely. The first lemma  asserts that  every nonzero function in $H^2$  is contained in a minimal Toeplitz kernel.
\begin{lem}\label{minkernel}\cite[Theorem 5.1]{CaP1} Let $f\in H^2\setminus\{0\}$ and let $f=IO$ be its inner-outer factorization. Then there exists a minimal Toeplitz kernel $K$ with $f\in K$, which we write as $K_{
\min}(f)$; moreover, $$K_{
\min}(f)=\ker T_{\overline{z}\overline{IO}/O}.$$\end{lem}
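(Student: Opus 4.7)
The plan is to verify the statement in two stages: first, that $f \in \Ker T_{\overline{z}\overline{IO}/O}$, and second, that this kernel is the smallest Toeplitz kernel with this property. Set $G = \overline{z}\,\overline{IO}/O$. Since on $\mathbb{T}$ we have $|I|=1$ and $|O|=|\overline{O}|$ almost everywhere, $G$ is unimodular and so lies in $L^\infty(\mathbb{T})$. A direct boundary computation gives $Gf = \overline{z}\,\overline{IO}\cdot IO/O = \overline{z}\,\overline{O}\,|I|^{2} = \overline{z}\,\overline{O}$, which belongs to $\overline{z}\,\overline{H^{2}}$, so $P_{H^{2}}(Gf)=0$ and $f \in \Ker T_{G}$.

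For minimality, I would take any Toeplitz kernel $\Ker T_{G'}$ with $G' \in L^\infty(\mathbb{T})$ containing $f$. From $T_{G'}f=0$ one has $G'f \in \overline{z}\,\overline{H^{2}}$, so there exists $u \in H^{2}$ with $G' = \overline{z}\,\overline{u}/(IO)$. Boundedness of $G'$ combined with $|f|=|O|$ on $\mathbb{T}$ yields the pointwise control $|u| \le \|G'\|_{\infty}\,|O|$ almost everywhere on $\mathbb{T}$. The crucial step is then to promote this bound to the membership $u/O \in H^\infty$: writing $u=J_{u}E_{u}$ with $J_{u}$ inner and $E_{u}$ outer, the ratio $E_{u}/O$ lies in the Smirnov class with $|E_{u}/O| \le \|G'\|_{\infty}$ a.e., so the classical identity $N^{+}\cap L^{\infty}(\mathbb{T}) = H^{\infty}$ forces $E_{u}/O \in H^{\infty}$, and hence $u/O = J_{u}(E_{u}/O) =: w \in H^{\infty}$.

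With $u=Ow$ in hand, I would close the argument as follows. Take $g \in \Ker T_{G}$, so that $\overline{I}\,\overline{O}g/O = \overline{v}$ for some $v \in H^{2}$; conjugating gives $f\overline{g}/\overline{O} = v \in H^{2}$, or equivalently $g/f = \overline{v}/\overline{O}$ on $\mathbb{T}$. Then $G'g = \overline{z}\,\overline{u}g/(IO) = \overline{z}\,\overline{u}\,\overline{v}/\overline{O}$, and membership of this in $\overline{z}\,\overline{H^{2}}$ is equivalent to $uv/O = wv \in H^{2}$, which is immediate since $w \in H^{\infty}$ and $v \in H^{2}$. Thus $g \in \Ker T_{G'}$, giving $\Ker T_{G}\subseteq \Ker T_{G'}$ and establishing minimality. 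The main obstacle I foresee is exactly the outer-function division step: the boundary bound $|u|\lesssim |O|$ does not by itself give analyticity of the quotient, and the argument depends essentially on $O$ being outer, i.e.\ on $\log|O| \in L^{1}(\mathbb{T})$. Once that step is secured, the rest is a chain of symbol manipulations on $\mathbb{T}$.
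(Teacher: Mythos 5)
Your argument is correct. Note that the paper itself gives no proof of this lemma --- it is quoted from \cite[Theorem 5.1]{CaP1} --- and your proof is essentially the standard one from that source: a direct check that $Gf=\overline{z}\,\overline{O}\in\overline{zH^2}$ for $G=\overline{z}\,\overline{IO}/O$, followed by minimality via writing $G'=\overline{z}\,\overline{u}/(IO)$ and promoting the boundary bound $|u|\le\|G'\|_\infty|O|$ to $u/O\in H^\infty$ through the Smirnov-class identity $N^+\cap L^\infty(\mathbb{T})=H^\infty$, which is exactly the step where outerness of $O$ is used; you have identified and handled that point correctly.
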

Here the minimal Toeplitz kernel $K_{\min}(f)$ means that $K_{\min}(f)\subseteq \ker T_h$ for any Toeplitz kernel $\ker T_h$ containing $f.$  And then the second lemma is proved in \cite{LP3}, which has many powerful applications.
 \begin{lem}\cite[Lemma 2.7]{LP3} \label{lem gs}If $g(s)$, $sg(s)\in H^2(\mathbb{C}_+)$, then \begin{eqnarray*}sg(s)e^{-st}\in \bigvee\{g(s)e^{-\lambda s},\;|\lambda -t|<\epsilon\}\label{sgs}\end{eqnarray*} for all $\epsilon>0.$  \end{lem}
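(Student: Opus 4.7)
The plan is to realise $sg(s)e^{-ts}$ as an $H^2(\mathbb{C}_+)$-norm limit of difference quotients of the map $\lambda \mapsto g(s)e^{-\lambda s}$, each of which is a scalar multiple of a difference of two admissible generators $g(s)e^{-\lambda_1 s}-g(s)e^{-\lambda_2 s}$ with $|\lambda_j - t|<\epsilon$. The heuristic is that this vector-valued map has formal derivative $-sg(s)e^{-ts}$ in $\lambda$, and the two hypotheses $g,\, sg \in H^2(\mathbb{C}_+)$ are exactly what is needed to make this differentiation valid in Hardy norm rather than merely pointwise.

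Concretely, for small nonzero real $h$ (taken positive if $t=0$, of either sign if $t>0$, and always with $|h|<\epsilon$), I would introduce
\[
q_h(s) \;:=\; \frac{g(s)e^{-(t+h)s} - g(s)e^{-ts}}{h},
\]
which lies in $\operatorname{span}\{g(s)e^{-\lambda s}:|\lambda-t|<\epsilon\}$. Evaluating on the boundary $s=iy$, the chain rule gives pointwise convergence $q_h(iy)\to -iy\, g(iy)e^{-ity}$ as $h\to 0$. For the dominating estimate, apply the mean value inequality to the complex-valued real-variable function $\lambda\mapsto e^{-\lambda iy}$, whose derivative has modulus $|y|$, to obtain
\[
\left|\frac{e^{-(t+h)iy}-e^{-tiy}}{h}\right| \;\leq\; |y|,
\]
hence $|q_h(iy)| \leq |y|\,|g(iy)| = |s g(s)|_{s=iy}$. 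Since $sg(s)\in H^2(\mathbb{C}_+)$ by hypothesis, this majorant is square integrable on $\mathbb{R}$, and dominated convergence on the boundary yields $q_h \to -sg(s)e^{-ts}$ in $H^2(\mathbb{C}_+)$-norm.

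Therefore $-sg(s)e^{-ts}$, and hence $sg(s)e^{-ts}$, belongs to $\bigvee\{g(s)e^{-\lambda s}:|\lambda-t|<\epsilon\}$, as claimed. The main obstacle is not the pointwise differentiation, which is immediate, but rather the promotion of pointwise to $H^2$-norm convergence: this is precisely where the second hypothesis $sg \in H^2(\mathbb{C}_+)$ enters in an essential way, by supplying a uniform square-integrable majorant of the difference quotients. Without that control the formal derivative could exist pointwise while failing to represent a vector in $H^2(\mathbb{C}_+)$ or failing to be approximated in norm.
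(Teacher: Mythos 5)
Your argument is correct and complete. Note that the paper does not prove this lemma at all: it is quoted verbatim from \cite[Lemma 2.7]{LP3}, so there is no in-paper proof to measure you against; your difference-quotient argument is the natural one and is essentially the standard route to this statement. The two delicate points are handled properly: the generators $g(s)e^{-(t+h)s}$ and $g(s)e^{-ts}$ both lie in the prescribed family (your sign restriction on $h$ when $t=0$ keeps the exponent parameter nonnegative, so the functions stay in $H^2(\mathbb{C}_+)$), and the bound $\bigl|e^{-i(t+h)y}-e^{-ity}\bigr|\leq |hy|$ gives the square-integrable majorant $|y|\,|g(iy)|$, so dominated convergence applied to $|q_h(iy)+iy\,g(iy)e^{-ity}|^2$ (dominated by $4|y|^2|g(iy)|^2$) upgrades pointwise to $L^2$ boundary convergence, hence $H^2$-norm convergence, and the closed span contains the limit. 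You correctly identify that the hypothesis $sg\in H^2(\mathbb{C}_+)$ is exactly what furnishes the majorant (and also guarantees the limit function itself lies in $H^2(\mathbb{C}_+)$). A marginally softer variant would observe that the difference quotients are norm-bounded by the same estimate and conclude via weak closedness of the closed subspace, but your norm-convergence version is stronger and no harder.
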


Recall the reproducing kernel
of $H^2(\mathbb{C}_+)$ is \begin{eqnarray*}
k_w(s)=\frac{1}{2\pi (s+\overline{w})}.\label{kw}
\end{eqnarray*} Then for $w\in \mathbb{C}_+$ and $f\in H^2(\mathbb{C}_+),$ it holds that \begin{eqnarray} f(w)=\langle f, k_w\rangle=\int_{-\infty}^{\infty} f(iy) \overline{k_w(iy)}dy. \label{kernel}
\end{eqnarray}

Now we can reformulate (1) of Theorem \ref{thm n} for  the case $n=0.$

 \begin{prop}\label{model1}
 The subspace $N(1/(1+s))$ is backward shift-invariant and  $$N\Big(\frac{1}{1+s}\Big)=K_{\frac{1-s}{1+s}e^{-\delta s}}.$$
 \end{prop}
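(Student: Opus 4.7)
The plan is to first obtain backward shift invariance by transferring the question to $L^2(0,\infty)$ via the Laplace transform, and then to identify the resulting model space explicitly by proving inclusions in both directions.

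For backward shift invariance, Corollary~\ref{cor half} gives $N(1/(1+s)) = \mathcal{L}([e_\delta]_s)$, so I would work on the time side. A direct computation yields
$$
(S(t)^* e_\lambda)(\zeta) = e_\lambda(\zeta + t) = e^{-t}\, e_{\max(\lambda - t,\,0)}(\zeta),
$$
which lies in $\bigvee\{e_\mu:\;0\leq \mu\leq \delta\} = [e_\delta]_s$ for every $t\geq 0$ and every $0\leq \lambda\leq \delta$. Intertwining via $\mathcal{L}$ (as in the commutative diagram in Section~1) shows that $N(1/(1+s))$ is $\{M(t)^*\}_{t\geq 0}$-invariant. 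Since the nontrivial closed subspaces of $H^2(\mathbb{C}_+)$ invariant under $\{M(t)^*\}_{t\geq 0}$ are exactly the model spaces (as recalled in the paragraph after Theorem~\ref{thm n}), this immediately gives $N(1/(1+s)) = K_\phi$ for some inner $\phi\in H^\infty(\mathbb{C}_+)$.

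To identify $\phi$ with $\Theta(s):=\frac{1-s}{1+s}e^{-\delta s}$, I would prove both inclusions. For $N(1/(1+s))\subseteq K_\Theta$, it suffices to verify that each generator $e^{-\lambda s}/(1+s)$ is orthogonal to $\Theta H^2(\mathbb{C}_+)$. Noting that $1/(1+s)$ is a constant multiple of the reproducing kernel $k_1$, and that $e^{\lambda s}\Theta = \frac{1-s}{1+s}e^{-(\delta-\lambda)s}$ is a bounded inner function for $0\leq \lambda\leq \delta$, one computes, for any $g\in H^2(\mathbb{C}_+)$,
$$
\bigl\langle \Theta g,\; e^{-\lambda s}/(1+s)\bigr\rangle \;\propto\; \bigl(M(\lambda)^*(\Theta g)\bigr)(1) \;=\; \left(\tfrac{1-s}{1+s}e^{-(\delta-\lambda)s}\, g\right)\!\bigg|_{s=1} \;=\; 0,
$$
because the Blaschke factor $(1-s)/(1+s)$ vanishes at $s=1$. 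For the reverse inclusion, I would use the standard model space decomposition
$$
K_\Theta = K_{e^{-\delta s}}\oplus e^{-\delta s}K_{(1-s)/(1+s)} = K_{e^{-\delta s}}\oplus \mathbb{C}\, e^{-\delta s}/(1+s).
$$
The second summand is exactly the $\lambda=\delta$ generator of $N(1/(1+s))$; for the first, Paley--Wiener gives $K_{e^{-\delta s}} = \mathcal{L}(L^2(0,\delta))$, and Proposition~\ref{prop L2} yields $L^2(0,\delta) \subseteq [e_\delta]_s$, so $K_{e^{-\delta s}} \subseteq \mathcal{L}([e_\delta]_s) = N(1/(1+s))$.

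The main technical point is the orthogonality computation in the forward inclusion: one has to handle $M(\lambda)^*$ correctly by recognising that $e^{\lambda s}\Theta\in H^\infty(\mathbb{C}_+)$ for $\lambda\in[0,\delta]$, so the Riesz projection acts as the identity on $e^{\lambda s}\Theta g$, and then exploit the vanishing of the Blaschke factor at $s=1$ against the reproducing kernel $k_1$. The rest is standard Paley--Wiener and model-space bookkeeping.
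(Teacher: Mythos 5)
Your proof is correct, but it reaches the identification by a genuinely different route than the paper. The paper's argument stays entirely in $H^2(\mathbb{C}_+)$: it checks $\{M(t)^*\}_{t\geq 0}$-invariance by projecting $e^{\alpha s}e^{-\lambda s}/(1+s)$ with the reproducing-kernel identity (the projection comes out as a multiple of $1/(1+w)$, already a generator), concludes that $N(1/(1+s))$ is a model space, and then pins it down as the minimal Toeplitz kernel $K_{\min}\left(e^{-\delta s}/(1+s)\right)=K_{\frac{1-s}{1+s}e^{-\delta s}}$ via Lemma~\ref{minkernel}, using that every Toeplitz kernel containing the $\lambda=\delta$ generator contains all the others. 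You instead verify invariance on the time side through $S(t)^*e_\lambda=e^{-t}e_{\max(\lambda-t,0)}$ and the Laplace intertwining, and you replace the minimal-kernel step by two direct inclusions: orthogonality of the generators to $\Theta H^2(\mathbb{C}_+)$ with $\Theta=\frac{1-s}{1+s}e^{-\delta s}$ (using that $e^{\lambda s}\Theta$ is inner for $0\leq\lambda\leq\delta$ and vanishes at $s=1$ when paired with $k_1$), and the splitting $K_\Theta=K_{e^{-\delta s}}\oplus\mathbb{C}\,e^{-\delta s}/(1+s)$ together with Paley--Wiener and Proposition~\ref{prop L2}; both computations are sound. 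Your route is more elementary---it avoids Lemma~\ref{minkernel} and the near-invariance of Toeplitz kernels altogether, and since you prove both inclusions you do not even need the Lax--Beurling identification of $\{M(t)^*\}$-invariant subspaces, which in your write-up would anyway require knowing $N(1/(1+s))$ is proper. What the paper's route buys is independence from the earlier $L^2(0,\infty)$ results (your reverse inclusion leans on $L^2(0,\delta)\subseteq[e_\delta]_s$ from Proposition~\ref{prop L2}, and you should be careful to invoke only the first, trivial equality of Corollary~\ref{cor half}, since its second equality is exactly the statement being re-derived) and a template that scales directly: the same minimal-kernel argument is reused verbatim for the higher-order case in Theorem~\ref{thm modelway}, whereas your explicit decomposition of $K_\Theta$ would have to be reworked for each $n$.
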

 \begin{proof} First we prove $N(1/(1+s))$ is  invariant with respect to $\{M(t)^*\}_{t\geq 0}$.
 Take $0 \le \lambda \le \delta$.
 For the case $ \alpha \leq\lambda,$  $e^{\alpha s} e^{-\lambda s}/(1+s)$ is clearly in $N(1/(1+s))$.  For the other case $\alpha >\lambda$, we use \eqref{kernel} to obtain
 the projection of $e^{\alpha s} e^{-\lambda s}/(1+s)$ into $H^2$ by
 $$
 \left\langle e^{\alpha s} \frac{e^{-\lambda s}}{1+s}, k_{w}\right\rangle=
 \left\langle \frac{1}{2\pi(1+s)}, \frac{e^{(\lambda-\alpha)s}}{s+\overline {w}}\right\rangle=
 \frac{e^{\alpha-\lambda}}{1+w},$$ due to $1/(2\pi(1+s))$ is  a reproducing kernel. In sum, $N(1/(1+s))$ is a model space.
 Further,  Lemma \ref{minkernel} implies that
$$K_{\min}\left(\frac{1}{1+s}e^{-\delta s}\right)=\ker T_{\overline{\frac{1-s}{1+s}e^{-\delta s}}}=K_{\frac{1-s}{1+s}e^{-\delta s}}.$$
Since every Toeplitz kernel containing $e^{-\delta s}/(1+s)$ also contains $e^{-\lambda s}/(1+s)$ for $0 \le \lambda \le \delta$, so the minimal Toeplitz kernel containing $e^{-\delta s}/(1+s)$ equals $N(1/(1+s)),$ ending the proof.\end{proof}
After that,  the derivative-kernel property is used to exhibit (1) of Theorem  \ref{thm n}  for general $n\in \mathbb{N}_0.$
\begin{thm}\label{thm modelway} For any $n\in\NN_0,$ the subspace  $N(1/(1+s)^{n+1})$  is backward shift-invariant and \begin{align}N\Big(\frac{1}{(1+s)^{n+1}}\Big)=K_{ \left(\frac{1-s}{1+s}\right)^{n+1}e^{-\delta s}}.\label{n+1}\end{align} \end{thm}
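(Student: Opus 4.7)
The plan is to identify $N(1/(1+s)^{n+1})$ as the minimal Toeplitz kernel containing the generator $e^{-\delta s}/(1+s)^{n+1}$, leveraging Proposition \ref{model1} (the $n=0$ case), Lemma \ref{lem gs}, Lemma \ref{minkernel}, and the fact from \cite[Theorem 3.1.5]{Par} that every nonzero $\{M(t)^*\}_{t\geq 0}$-invariant closed subspace of $H^2(\mathbb{C}_+)$ is a model space.

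First I would establish a chain of inclusions $N(1/(1+s)^k) \subseteq N(1/(1+s)^{n+1})$ for every $1 \leq k \leq n+1$. Applying Lemma \ref{lem gs} to $g_k = 1/(1+s)^{k+1}$ (both $g_k$ and $sg_k$ lie in $H^2(\mathbb{C}_+)$) produces $sg_k(s)e^{-ts} \in N(g_k)$ for each $t \in [0,\delta]$; the algebraic identity $s/(1+s)^{k+1} = 1/(1+s)^k - 1/(1+s)^{k+1}$ then places $e^{-ts}/(1+s)^k$ inside $N(1/(1+s)^{k+1})$, so $N(1/(1+s)^k) \subseteq N(1/(1+s)^{k+1})$, and iterating gives the chain.

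Next I would verify that $N(1/(1+s)^{n+1})$ is $M(\tau)^*$-invariant for all $\tau > 0$. If $\tau \leq \lambda$ then $M(\tau)^*[e^{-\lambda s}/(1+s)^{n+1}] = e^{-(\lambda-\tau)s}/(1+s)^{n+1}$ already lies in $N$. If $\tau > \lambda$, writing $\sigma = \tau - \lambda$ and transferring through the Laplace transform would turn $M(\sigma)^*$ into $S(\sigma)^*$ acting on the preimage $\zeta^n e^{-\zeta}/n!$; binomial expansion of $(\zeta+\sigma)^n e^{-(\zeta+\sigma)}$ and re-application of the Laplace transform give
\[
M(\tau)^*\!\left[\frac{e^{-\lambda s}}{(1+s)^{n+1}}\right] = e^{-\sigma}\sum_{j=0}^{n} \frac{\sigma^j}{j!}\cdot \frac{1}{(1+s)^{n+1-j}},
\]
a finite combination of functions $1/(1+s)^k$ with $1 \leq k \leq n+1$, each belonging to $N(1/(1+s)^{n+1})$ by the first step. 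Closing over the generators then yields $M(\tau)^*$-invariance, so the cited result identifies $N(1/(1+s)^{n+1})$ with some model space $K_\phi$.

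Finally, to pin down $\phi$, I would note that $N(1/(1+s)^{n+1}) = K_\phi$ is a Toeplitz kernel containing $e^{-\delta s}/(1+s)^{n+1}$, so Lemma \ref{minkernel} forces $K_{((1-s)/(1+s))^{n+1}e^{-\delta s}} \subseteq N(1/(1+s)^{n+1})$. The reverse inclusion is immediate because $K_{((1-s)/(1+s))^{n+1}e^{-\delta s}}$ is itself $M(t)^*$-invariant, contains $e^{-\delta s}/(1+s)^{n+1}$, and hence contains $M(\delta-\lambda)^*$ applied to it, namely every generator $e^{-\lambda s}/(1+s)^{n+1}$ of $N$. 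The principal obstacle is the second-step computation of $M(\tau)^*$ on the generators, which the Laplace-transform identification with translation on $L^2(0,\infty)$ reduces to a binomial expansion, after which the chain produced by Lemma \ref{lem gs} absorbs all lower-order terms into $N(1/(1+s)^{n+1})$.
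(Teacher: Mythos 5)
Your proof is correct, and its skeleton matches the paper's: show that $N(1/(1+s)^{n+1})$ is invariant under $\{M(t)^*\}_{t\geq 0}$, hence a model space, and then identify it with the minimal Toeplitz kernel of $e^{-\delta s}/(1+s)^{n+1}$ via Lemma \ref{minkernel}, with Lemma \ref{lem gs} supplying the lower powers $1/(1+s)^k$. The genuine difference lies in the central projection computation. The paper stays inside $H^2(\mathbb{C}_+)$ and differentiates the reproducing-kernel identity $n$ times (formula \eqref{fms}), concluding only qualitatively that the projection of $e^{(\alpha-\lambda)s}/(1+s)^{n+1}$ is a linear combination of $1/(1+w),\dots,1/(1+w)^{n+1}$; you instead transfer through the Laplace transform to the translation semigroup on $L^2(0,\infty)$ and expand $(\zeta+\sigma)^n e^{-(\zeta+\sigma)}/n!$ binomially, which yields the explicit (and correct) identity $M(\tau)^*\big[e^{-\lambda s}/(1+s)^{n+1}\big]=e^{-\sigma}\sum_{j=0}^{n}\frac{\sigma^j}{j!}\,(1+s)^{-(n+1-j)}$ with $\sigma=\tau-\lambda$. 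Your route buys explicit coefficients, a cleaner absorption of the lower-order terms via the front-loaded chain $N(1/(1+s)^k)\subseteq N(1/(1+s)^{n+1})$, and, through the final two-sided inclusion with $K_{(\frac{1-s}{1+s})^{n+1}e^{-\delta s}}$, an implicit confirmation that $N(1/(1+s)^{n+1})$ is a proper subspace, a point tacitly needed (and glossed over in the paper) when invoking \cite[Theorem 3.1.5]{Par} to pass from $\{M(t)^*\}_{t\geq 0}$-invariance to a model space. The paper's route, in turn, never leaves $H^2(\mathbb{C}_+)$ and directly generalizes the reproducing-kernel argument of the $n=0$ case (Proposition \ref{model1}). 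One small point to tidy: Lemma \ref{lem gs} applies to $g_k=1/(1+s)^{k+1}$ only because $k\geq 1$ (so that $sg_k\in H^2(\mathbb{C}_+)$), and at the endpoints $t=0,\delta$ you should either restrict to interior $t$ and use closedness of $N(g_k)$ or note norm-continuity of $t\mapsto sg_k e^{-ts}$; both are harmless.
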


\begin{proof}By \eqref{kernel}, it holds that
$$\left\langle \frac{1}{2\pi(s+w)}, f\right\rangle = \overline{f(\overline{w})}$$ for any $f \in H^2(\mathbb{C}_+).$  Then differentiating $n$ times, $$\left\langle \frac{(-1)^n n!}{2\pi (s+w)^{n+1}}, f\right\rangle= \overline{f^{(n)}(\overline{w})}.$$
Thus
\begin{eqnarray}
\left\langle f, \frac{1}{2\pi(s+\overline{w})^{n+1}}\right\rangle = (-1)^n\frac{f^{(n)}(w)}{n!}.\label{fms}\end{eqnarray}
Again we take $0 \le \lambda \le \delta$.
On the one hand, if $\alpha\leq \lambda,$ then $ e^{\alpha s} e^{-\lambda s}/ (1+s)^{n+1} \in N(1/(1+s)^{n+1})$.
On the other hand, if $\alpha>\lambda$, using \eqref{fms}, we have
\begin{eqnarray*}
\left\langle \frac{e^{\alpha s} e^{-\lambda s}}{ (1+s)^{n+1}} , k_w \right\rangle &=&
\left\langle \frac{1}{(1+s)^{n+1}} , e^{(\lambda-\alpha)s}k_w \right\rangle \nonumber
\\
&=&\frac{2\pi (-1)^{n}}{n!}\overline{\left(e^{(\lambda -\alpha)s}k_w(s) \right)^{(n)}}\Big|_{s=1},\label{diff}
\end{eqnarray*}
which  has an expression involving a linear combination of the terms $1/(1+w), \cdots$, $1/(1+w)^{n+1}$.

In view of Lemma \ref{lem gs}, we see that $s/(1+s)^{n+1}$ lies in $N(1/(1+s)^{n+1})$ by differentiating with respect to $\lambda$, and hence so does $1/(1+s)^{n}.$
This shows that $$P_{H^2(\CC_+)}\left[e^{\alpha s} \frac{ e^{-\lambda s}}{(1+s)^{n+1}}\right] \in N(1/(1+s)^{n+1})\;\;\mbox{for}\;\;\alpha\geq \lambda.$$  In sum, $N(1/(1+s)^{n+1})$ is backward shift-invariant and so it is a model space.
Similarly, since every Toeplitz kernel containing $e^{-\delta s}/(1+s)^{n+1}$  also contains $e^{-\lambda s}/(1+s)^{n+1}$ for $0 \le \lambda \le \delta$, so the minimal Toeplitz kernel containing $e^{-\delta s}/(1+s)^{n+1}$ equals $N(1/(1+s)^{n+1}).$  Lemma \ref{minkernel} implies the minimal Toeplitz kernel containing $e^{-\delta s}/(1+s)^{n+1} $ is $$K_{\min}\left(\frac{e^{-\delta s}}{(1+s)^{n+1}}\right)=K_{ \left(\frac{1-s}{1+s}\right)^{n+1}e^{-\delta s}},$$ which establishes \eqref{n+1}.
\end{proof}

More generally, given an invertible  function $ G\in H^{\infty}(\mathbb{C}_+)$, although Theorem \ref{thm modelway} implies
$$N\left(\frac{G(s)}{(1+s)^{n+1}}\right)=G(s)N\left(\frac{1}
{(1+s)^{n+1}}\right)=G(s)K_{({\frac{1-s}{1+s}})^{n+1}e^{-\delta s}},$$ we cannot even assert the subspace $N(G(s)/(1+s)^{n+1})$ is backward shift-invariant for the invertible rational function $ G\in H^{\infty}(\mathbb{C}_+)$. In order to formulate $N(G(s)/(1+s)^{n+1})$, we need to recall a result on the analytic multipliers between Toeplitz kernels.

In the following lemma, the analytic function $k\in \mathcal{C}(\ker T_g)$ means $|k^2|dm$ is a Carleson measure for $\ker T_g$, that is, $k \ker T_g \subseteq L^2(\mathbb{T}).$ And we say $\mu$ is a Carleson measure for a subspace $X$ of $H^2(\mathbb{D})$ if there is a constant $C>0$ such that $$\int_{\mathbb{T}}|f|^2 d\mu\leq C\|f\|_2^2\;\;\mbox{for all}\; f\in X,$$ where  $X$ is the Toeplitz kernel, including model space.
 \begin{lem}\label{lem ker}\cite[Theorem 3.3]{CaP3} Let $g, h\in L^\infty(\mathbb{T})$ such that $\ker T_g$ and $\ker T_h$ are nontrivial. For $k\in H(\mathbb{D})$, it holds that $k \ker T_g=\ker T_h$ if and only if $k\in \mathcal{C}(\ker T_g),\;k^{-1}\in \mathcal{C}(\ker T_h)$ and \begin{eqnarray}h=g\frac{\overline{k}}{k}\frac{\overline{q}}
{\overline{p}}\label{hg}\end{eqnarray} for some outer functions $p, q\in H^2(\mathbb{D}).$\end{lem}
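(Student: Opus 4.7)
My strategy is to extract both directions from the single defining property of Toeplitz kernels, namely that $f\in \ker T_g$ if and only if $f\in H^2$ and $gf\in \overline{z}\,\overline{H^2}$, together with the minimal-kernel structure provided by Lemma \ref{minkernel}. The Carleson hypotheses turn out to encode the ``$H^2$''-part of this characterisation, while the multiplier identity \eqref{hg} encodes the ``$\overline{z}\,\overline{H^2}$''-part.

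\emph{Necessity.} Assume $k\ker T_g=\ker T_h$. The containment $k\ker T_g\subseteq \ker T_h\subseteq H^2\subseteq L^2$ is literally the statement $k\in\mathcal{C}(\ker T_g)$, and the same applied to $k^{-1}$ gives $k^{-1}\in\mathcal{C}(\ker T_h)$. To obtain \eqref{hg}, I fix any non-zero $f\in\ker T_g$ and write
$$
gf=\bar{z}\bar{\phi},\qquad hkf=\bar{z}\bar{\psi},
$$
for some $\phi,\psi\in H^2$ depending on $f$. Dividing the two identities (valid a.e.\ on $\mathbb{T}$) yields $hk/g=\bar{\psi}/\bar{\phi}$, so the desired form $h=g(\bar{k}/k)(\bar{q}/\bar{p})$ reduces to producing outer $p,q\in H^2$ with $p\,\psi=k\,q\,\phi$, i.e.\ writing the meromorphic function $k\phi/\psi$ as a quotient $p/q$ of two outer $H^2$-functions. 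Here I would use that $kf\in H^2$ (from the Carleson condition) to show that $k\phi/\psi$ lies in the Smirnov class $N^+$, after which the standard outer/outer representation in $N^+$ yields the required $p,q$. Independence of the symbol formula from the particular choice of $f$ follows from the fact that Lemma \ref{minkernel} fixes the symbol of a Toeplitz kernel up to an analytic factor of modulus one, which will be absorbed into $\bar{q}/\bar{p}$.

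\emph{Sufficiency.} Conversely, assume the Carleson conditions and the identity \eqref{hg}. For any $f\in\ker T_g$, write $gf=\bar{z}\bar{\phi}$ with $\phi\in H^2$ and compute
$$
hkf \;=\; g\frac{\bar{k}}{k}\frac{\bar{q}}{\bar{p}}\cdot kf
\;=\;\frac{\bar{k}\bar{q}}{\bar{p}}\,gf
\;=\;\bar{z}\,\overline{\left(\frac{kq\phi}{p}\right)}.
$$
The Carleson condition $k\in\mathcal{C}(\ker T_g)$ gives $kf\in H^2$, and using the outerness of $p$ (so that division by $p$ preserves the Smirnov class) together with $p\,\psi=k\,q\,\phi$ one verifies that $kq\phi/p\in H^2$. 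Hence $kf\in\ker T_h$, giving $k\ker T_g\subseteq \ker T_h$. The reverse inclusion $k^{-1}\ker T_h\subseteq\ker T_g$ follows from exactly the same argument applied to the companion identity $g=h(k/\bar{k})(\bar{p}/\bar{q})$, which is just \eqref{hg} rearranged, using the second Carleson condition $k^{-1}\in\mathcal{C}(\ker T_h)$.

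\emph{Main obstacle.} The delicate point is the Smirnov-class justification in the necessity argument: one must show that $k\phi/\psi$ (built from an $f$-dependent pair $\phi,\psi\in H^2$ and an only-analytic multiplier $k\in H(\mathbb{D})$) admits an outer/outer representation, and that the resulting outer functions $p,q$ are genuinely $f$-independent. I would handle this by applying the construction to the minimal-kernel generator of $\ker T_g$ from Lemma \ref{minkernel}, since this choice makes $\phi=O_0$ (the outer factor of the generator) and therefore eliminates the apparent dependence on $f$, at which point $p$ and $q$ are intrinsic invariants of the pair $(g,h,k)$ as required.
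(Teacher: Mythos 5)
First, note that the paper itself gives no proof of this lemma: it is quoted verbatim from [CaP3, Theorem 3.3], so your argument can only be judged on its own merits, and in the necessity direction it has a genuine gap at the central step. Your plan is to show that $k\phi/\psi$ lies in the Smirnov class and then invoke a ``standard outer/outer representation in $N^+$''; no such representation exists, since a Smirnov-class function is an inner factor times a quotient with outer denominator, and being a quotient of \emph{two outer} functions is strictly stronger (it fails precisely when a nontrivial inner factor survives). Killing that inner factor is the whole content of \eqref{hg}, and it must use the full equality $k\ker T_g=\ker T_h$; your necessity computation only ever uses the inclusion $k\ker T_g\subseteq\ker T_h$ (the reverse inclusion enters only to give the Carleson condition on $k^{-1}$). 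Concretely, take $g=\bar z$, $h=\bar z^{2}$, $k=1$: then $k\ker T_g=\mathbb{C}\subseteq K_{z^{2}}=\ker T_h$, both Carleson containments hold, and with your preferred choice $f=1$ (which \emph{is} a maximal function of $\ker T_g$, with $\phi=O_0=1$) one gets $\psi=z$, so $k\phi/\psi=1/z$ admits no outer/outer representation and \eqref{hg} indeed fails, because it would force $q=zp$ with $q$ outer. Every step you write down goes through in this situation, so your argument cannot distinguish inclusion from equality; the mechanism that rules out the extra inner factor of $\psi$ (surjectivity of multiplication by $k$ onto $\ker T_h$) is missing. Your proposed fix also misreads Lemma \ref{minkernel}: that lemma assigns to a given \emph{function} its minimal kernel, but does not assert that a given kernel $\ker T_g$ possesses a generator $f$ with $K_{\min}(f)=\ker T_g$; that existence is a separate nontrivial fact, and even granting it, $\phi=O_0$ only when $g$ equals the canonical symbol $\bar z\bar I\bar O/O$, whereas for the actual symbol $g$ one only knows the two symbols have equal kernels, and relating them (and showing $kf$ is then maximal for $\ker T_h$, which is where equality is really used in C\^amara--Partington's proof) is exactly the machinery your sketch omits.

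The sufficiency direction is essentially the right computation but needs repairs. The identity ``$p\psi=kq\phi$'' you invoke belongs to the other direction and is not available here; what must be shown is that $kq\phi/p\in H^{2}$, and since $k$ is only assumed analytic this is not automatic: you need $k\in N^{+}$, which you can get by choosing an \emph{outer} $f_0\in\ker T_g$ (Toeplitz kernels contain the outer factors of their elements, since $gIO\in\bar z\overline{H^{2}}$ implies $gO=\bar I\,gIO\in\bar z\overline{H^{2}}$) and writing $k=(kf_0)/f_0$; then $kq\phi/p\in N^{+}$, it lies in $L^{2}(\mathbb{T})$ because $h\in L^{\infty}$ and $kf\in H^{2}$, and Smirnov's maximum principle gives $kq\phi/p\in H^{2}$, hence $kf\in\ker T_h$. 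The reverse inclusion via the rearranged identity is fine once one also records that $k$ is zero-free in $\mathbb{D}$ (so that $k^{-1}$ is analytic), which holds because a nontrivial Toeplitz kernel has no common zero. With these repairs the sufficiency half stands; the necessity half, as outlined, does not.
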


Applying Lemma \ref{lem ker} with inner functions $g=\overline{\theta}$ and $h=\overline{\psi}$,  a remark follows.
 \begin{rem}\label{rem model}Suppose that $\theta$ and $\psi$ are inner functions and $k K_{\theta}= K_{\psi}$ with $k\in H(\mathbb{D})$, then it holds that  $K_\psi = \ker T_{\overline{\theta}\overline{k}/k}=K_{\theta k/ \overline{k}}.$\end{rem}
\begin{proof}Firstly, we note $\theta k/\overline{k}$ is unimodular in $L^\infty(\mathbb{T}).$ It also equals $\psi p/q$ from \eqref{hg}, which is in the Smirnov class, so $\theta k/\overline{k}$ lies in $H^\infty(\mathbb{D})$ and inner. This entails $\psi=\gamma \theta k/\overline{k}$, with $\gamma\in \mathbb{C}$ and $|\gamma|=1,$ which is Crofoot's result from \cite{crofoot}.\end{proof}

Now  Lemma \ref{lem ker} is applied to formulate $N(G(s)/(1+s)^{n+1})$  for an invertible rational $G.$
\begin{prop}\label{prop G} Let $ G$ be a rational and invertible function in $H^{\infty}(\mathbb{C}_+)$, then for any non-negative integer $n,$ it holds that \begin{align*}N\left(\frac{G(s)}{(1+s)^{n+1}}\right)=G(s)K_{ \theta}=\ker T_h  \end{align*}   if and only if $G\in \mathcal{C}(\ker T_{\overline{\theta}})$ and $G^{-1}\in \mathcal{C}(\ker T_h)$ with $$\theta(s)= \left(\frac{1-s}{1+s}\right)^{n+1}e^{-\delta s}\;\;\mbox{and}\;\;h=\overline{\theta} \frac{\overline{G }}{G }\frac{\overline{q}}{\overline{p}} $$ for some outer functions $p, q\in H^2(\mathbb{C}_+)$. \end{prop}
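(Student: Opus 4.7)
My plan is to reduce the proposition to a direct application of Lemma~\ref{lem ker}, once I identify $N(G(s)/(1+s)^{n+1})$ with $G(s)K_\theta$ for the specific inner function $\theta(s)=((1-s)/(1+s))^{n+1}e^{-\delta s}$ already isolated in Theorem~\ref{thm modelway}.

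First I would verify the unconditional equality $N(G(s)/(1+s)^{n+1})=G(s)\cdot N(1/(1+s)^{n+1})$. Since $G$ is invertible in $H^\infty(\mathbb{C}_+)$, both $M_G$ and $M_{1/G}$ are bounded multiplication operators on $H^2(\mathbb{C}_+)$, so $M_G$ is a topological isomorphism of $H^2(\mathbb{C}_+)$ onto itself. Such an isomorphism carries closed linear spans to closed linear spans, so applying $M_G$ to the generating family $\{(1+s)^{-(n+1)}e^{-\lambda s}:0\leq\lambda\leq\delta\}$ yields the displayed identity. Invoking Theorem~\ref{thm modelway} to rewrite $N(1/(1+s)^{n+1})=K_\theta$, I would then obtain $N(G(s)/(1+s)^{n+1})=G(s)K_\theta$ unconditionally.

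For the remaining equality $G(s)K_\theta=\ker T_h$, I would note that $K_\theta=\ker T_{\overline{\theta}}$ and apply Lemma~\ref{lem ker} with $g=\overline{\theta}$ and $k=G$. The lemma reads off verbatim: this equality holds if and only if $G\in\mathcal{C}(\ker T_{\overline{\theta}})$, $G^{-1}\in\mathcal{C}(\ker T_h)$, and $h=\overline{\theta}\,(\overline{G}/G)\,(\overline{q}/\overline{p})$ for some outer $p,q\in H^2(\mathbb{C}_+)$, which matches the stated criterion exactly. The only delicate step is thus the first stage, namely the commutation of closed linear spans with multiplication by the bi-$H^\infty$ factor $G$; this is a standard consequence of boundedness of both $M_G$ and $M_{G^{-1}}$ on $H^2(\mathbb{C}_+)$, and beyond it the proposition is essentially immediate from Lemma~\ref{lem ker}.
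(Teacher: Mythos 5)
Your proposal is correct and takes essentially the same route as the paper: pull the invertible factor $G$ out of the closed span to get $N\left(G(s)/(1+s)^{n+1}\right)=G(s)K_{\theta}$ via Theorem~\ref{thm modelway}, then apply Lemma~\ref{lem ker} with $k=G$ and $g=\overline{\theta}$ to obtain the stated equivalence. The only difference is that you make explicit the span-commutation step (boundedness of $M_G$ and $M_{G^{-1}}$), which the paper treats as immediate from rationality and invertibility of $G$.
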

\begin{proof} Since $G\in H^\infty(\mathbb{C}_+)$ is rational and invertible, it follows that
$$N\left(\frac{G(s)}{(1+s)^{n+1}}\right)=G(s)K_{\theta}
\;\;\mbox{with}\;\theta(s)= \left(\frac{1-s}{1+s}\right)^{n+1}e^{-\delta s}.$$
Replacing $k$ and $g$ by $G$ and $\overline{\theta}$ in Lemma \ref{lem ker}, the equivalence clearly follows. \end{proof}
Then Proposition \ref{prop G} is used in Example \ref{exm invert}  to show $N(G(s)/(1+s)^{n+1})$ can  sometimes be a Toeplitz kernel but  not a model space for an invertible rational ${G}\in H^\infty(\mathbb{C}_+)$.
\begin{exm}\label{exm invert} It holds that \begin{align}N\left( \frac{s+3}{(1+s)(s+2)} \right)= G K_{ \theta}=\ker T_{\overline{
\theta Gq}/G\overline{p}}\label{M123}\end{align} where   $G(s)=(s+3)/(s+2),\;\theta(s)=(1-s)e^{-\delta s}/(1+s)$ and $p, q$ outer.\end{exm}
\begin{proof}
 Firstly taking out an invertible factor, Proposition \ref{model1} shows that
\begin{eqnarray*}N\left(\frac{s+3}{(1+s)(s+2)}\right)=\frac{s+3}{s+2}
N\left(\frac{1}{1+s}\right)=G K_{\theta} \end{eqnarray*} with $G$ and $\theta$ given in this example.  We conclude that $GK_{\theta}$ is not a model space, otherwise Remark \ref{rem model} implies that $\theta G/ \overline{G}$ would be inner, a contradiction, since it has a pole at $s=3$.

Next we show $GK_{\theta}$ is  a Toeplitz kernel $\Ker T_h$.   Proposition \ref{prop G} yields that
$$h=\bar{\theta}\frac{ \overline{Gq}}{G\overline{p}}$$
with outer functions $p, q$ and $G\in \mathcal{C}(\ker T_{\overline{\theta}})$, $G^{-1}\in \mathcal{C}(\ker T_h)$.  Meanwhile  \begin{eqnarray*} \overline{\theta}\frac{\overline{G}}{G}=e^{\delta s}\frac{1+s}{1-s} \frac{2+s}{2-s}\frac{3-s}{3+s},\end{eqnarray*}
which is clearly a unimodular symbol. So  the subspace in \eqref{M123} follows.
\end{proof}
For completeness, we exhibit the following example with a rational non-invertible $G(s)=1/(s+2)$, which behaves as a model space.
\begin{exm}\label{s+2n} It holds that $$N\left(\frac{1}{(1+s)(s+2)}\right)= K_{{\frac{1-s}{1+s}} \frac{2-s}{2+s} e^{-\delta s}}.$$.\end{exm}
\begin{proof} Using the bijection $V^{-1}$ in \eqref{V-map} to  map $N\left(1/(1+s)(s+2)\right)\subseteq H^2(\mathbb{C}_{+})$ onto $A\left((z+1)/(z+3)\right)\subseteq H^2(\mathbb{D})$, Theorem \ref{thm n} implies that
$$A\left(\frac{z+1}{z+3}\right)=\frac{1}{z+3}K_{z^{2}\phi^{\delta}}.$$

Further employing Remark \ref{rem model}, it holds that
$$\frac{1}{z+3}K_{z^{2}\phi^{\delta}}= K_{z\phi^{\delta}\frac{z+1/3}{1+z/3}}.$$

Then mapping  the above subspace back into $H^{2} (\mathbb{C}_{+})$, it yields the desired model space.
\end{proof}

\section{the  subspace $N(g)$ with a rational outer function $g$}
In this section, we concentrate on finding the subspace $N(g)$ with a rational outer function $g\in
H^2(\mathbb{C}_+)$ noting that $N(\theta h)=\theta N(h)$ for an inner function $\theta.$ The case $g$ is a rational outer function is  fundamental. At this time, $g$ can be factored as the product of an invertible function and one with zeros on the imaginary axis including infinity. By the isometric isomorphism $V^{-1}$ in \eqref{V-map}, we can map the function with zeros in $i\mathbb{R}\cup\{\infty\}$ into a function in $H^2(\mathbb{D})$   with zeros in the unit circle $\mathbb{T}$. In what follows, we turn to formulate $A(h)$ for  a rational function $h$ with $n$ zeros on   $\mathbb{T}$.  In our recent paper \cite{LP3}, we obtained the following proposition.
\begin{prop}\cite[Proposition 3.10]{LP3} \label{prop poly}
Let $\widetilde{p}_n(z):=\prod_{j=1}^n (z+w_j)$ with $w_j\in \mathbb{T}$,   $j=1,\cdots, n$;
then it follows that \begin{equation} A(\widetilde{p}_n)+ \phi^\delta K_{z^n}
=K_{z^{n+1}\phi^\delta}.\label{pN}\end{equation}  \end{prop}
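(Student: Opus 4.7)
The plan is to establish the two inclusions of the claimed identity.

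For the inclusion $A(\widetilde{p}_n) + \phi^\delta K_{z^n} \subseteq K_{z^{n+1}\phi^\delta}$, I would verify each generator lies in the right-hand model space. For $\widetilde{p}_n\phi^\lambda$ with $0\le\lambda\le\delta$, the key tool is the reciprocal polynomial identity $\overline{\widetilde{p}_n(z)} = z^{-n}\widetilde{p}_n^{*}(z)$ on $\mathbb{T}$, where $\widetilde{p}_n^{*}(z) := \prod_{j=1}^{n}(1+\overline{w}_j z)$: the conjugate of $\overline{z}^{n+1}\overline{\phi^\delta}\widetilde{p}_n\phi^\lambda$ equals $z\phi^{\delta-\lambda}\widetilde{p}_n^{*}$, which manifestly lies in $zH^2$ because $\phi^{\delta-\lambda}$ is inner for $\lambda\le\delta$. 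The inclusion $\phi^\delta K_{z^n}\subseteq K_{z^{n+1}\phi^\delta}$ is then immediate from the orthogonal decomposition $K_{z^{n+1}\phi^\delta}=K_{\phi^\delta}\oplus\phi^\delta K_{z^{n+1}}$ combined with $K_{z^n}\subseteq K_{z^{n+1}}$.

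For the reverse inclusion, I would invoke Theorem~\ref{thm n}(2), which identifies $K_{z^{n+1}\phi^\delta}$ with $A((1+z)^n)$; thus it suffices to show $(1+z)^n\phi^\lambda\in A(\widetilde{p}_n)+\phi^\delta K_{z^n}$ for every $\lambda\in[0,\delta]$. Since both $(1+z)^n$ and $\widetilde{p}_n$ are monic of degree $n$, their difference $q:=(1+z)^n-\widetilde{p}_n$ is a polynomial of degree $\le n-1$ and therefore belongs to $K_{z^n}$. Hence $(1+z)^n\phi^\lambda=\widetilde{p}_n\phi^\lambda+q\phi^\lambda$, where the first summand is in $A(\widetilde{p}_n)$ by construction and, at $\lambda=\delta$, the second summand $q\phi^\delta$ lies in $\phi^\delta K_{z^n}$.

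The case $\lambda<\delta$ is the main obstacle, because $q\phi^\lambda$ no longer visibly belongs to $\phi^\delta K_{z^n}$. I would close the argument via the orthogonal complement inside $K_{z^{n+1}\phi^\delta}$: take $f\in K_{z^{n+1}\phi^\delta}$ with $f\perp A(\widetilde{p}_n)$ and $f\perp\phi^\delta K_{z^n}$, and deduce $f=0$. Using $K_{z^{n+1}\phi^\delta}=K_{\phi^\delta}\oplus\phi^\delta K_{z^{n+1}}$, write $f=g+\phi^\delta p$ with $g\in K_{\phi^\delta}$ and $p$ a polynomial of degree $\le n$; then $\langle f,\phi^\delta z^k\rangle=p_k=0$ for $k=0,\dots,n-1$ forces $p=b_nz^n$. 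Testing against $\widetilde{p}_n\phi^\delta\in A(\widetilde{p}_n)$ and computing $\langle\phi^\delta z^n,\widetilde{p}_n\phi^\delta\rangle=\widetilde{p}_n^{*}(0)=1$ yields $b_n=0$, so $f=g\in K_{\phi^\delta}$ with $\langle g,\widetilde{p}_n\phi^\lambda\rangle=0$ for all $\lambda\in[0,\delta]$. The crux is then to deduce $g=0$, i.e., to prove that the orthogonal projection of $\{\widetilde{p}_n\phi^\lambda:\lambda\in[0,\delta]\}$ into $K_{\phi^\delta}$ has dense linear span; I expect to bootstrap this from the already-known density for $(1+z)^n$ supplied by Theorem~\ref{thm n}(2), transferring it to general $\widetilde{p}_n$ using that $\widetilde{p}_n$ and $(1+z)^n$ differ only by a polynomial of degree $\le n-1$ whose contribution is absorbed by the complementary summand $\phi^\delta K_{z^n}$.
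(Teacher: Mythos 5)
Your forward inclusion and your reduction are sound: the identity $\widetilde{p}_n=z^n\,\overline{\widetilde{p}_n^{*}}$ on $\mathbb{T}$ does show $\widetilde{p}_n\phi^\lambda\in K_{z^{n+1}\phi^\delta}$, and the bookkeeping with $K_{z^{n+1}\phi^\delta}=K_{\phi^\delta}\oplus\phi^\delta K_{z^{n+1}}$ correctly reduces the reverse inclusion to the claim: if $g\in K_{\phi^\delta}$ and $g\perp\widetilde{p}_n\phi^\lambda$ for all $\lambda\in[0,\delta]$, then $g=0$. But this claim, which you yourself call the crux, is exactly what you do not prove, and the bootstrap you sketch cannot close it. To deduce $g\perp(1+z)^n\phi^\lambda$ from $g\perp\widetilde{p}_n\phi^\lambda$ you would need $g\perp q\phi^\lambda$ for the correction $q=(1+z)^n-\widetilde{p}_n$ of degree $\le n-1$ and for \emph{every} $\lambda\in[0,\delta]$; but $g\in K_{\phi^\delta}$ only gives $g\perp\phi^\delta H^2$, hence $g\perp q\phi^\lambda$ only at $\lambda=\delta$. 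This is precisely the ``$\lambda<\delta$ obstacle'' you already flagged in your first attempt; moving it into the orthogonal-complement picture does not remove it, so the argument fails at the decisive point. (For comparison: the present paper gives no proof of this proposition, citing \cite{LP3}; its closest in-house argument is Propositions \ref{prop z+w} and \ref{prop pn}, which establish the stronger equality $A(\widetilde{p}_n)=K_{z^{n+1}\phi^\delta}$.)

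The crux can be closed with the tools the paper uses in Proposition \ref{prop z+w}. By Lemma \ref{lem fgu} and Remark \ref{rem fg}, write $g=\overline{hz}\,\phi^\delta$ with $h\in K_{\phi^\delta}$; then $\langle\widetilde{p}_n\phi^\lambda,g\rangle=\langle z\widetilde{p}_n h,\phi^{\delta-\lambda}\rangle$, so your hypothesis together with Corollary \ref{cor phidelta} (the closed span of $\{\phi^{\mu}:0\le\mu\le\delta\}$ is $K_{z\phi^\delta}$) forces $z\widetilde{p}_n h\in z\phi^\delta H^2$, i.e.\ $\widetilde{p}_n h=\phi^\delta k$ for some $k\in H^2$. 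Since $\widetilde{p}_n$ is outer (all zeros on $\mathbb{T}$), $h/\phi^\delta=k/\widetilde{p}_n$ lies in the Smirnov class and has $L^2$ boundary modulus $|h|$, so by Smirnov's maximum principle $h\in\phi^\delta H^2\cap K_{\phi^\delta}=\{0\}$, whence $g=0$. Without an argument of this kind (or some other genuine proof of the crux), your proposal is incomplete.
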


In order to explore the concrete description of $A(\widetilde{p}_n)$,  several lemmas are cited for properties of model spaces.

\begin{lem}\cite[Proposition 5.5]{GMR}\;\label{lem Hinfinity}If $\varphi\in H^\infty$ and $\theta$ is inner, then $T_{\overline{\varphi}}K_\theta\subseteq K_\theta.$ \end{lem}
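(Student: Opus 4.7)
The plan is to verify directly that $T_{\overline{\varphi}}f \perp \theta H^2$ for every $f \in K_\theta$, which, together with the obvious membership $T_{\overline{\varphi}}f \in H^2$, yields the claim $T_{\overline{\varphi}}f \in K_\theta = H^2 \ominus \theta H^2$.

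First I would unpack the definition: since $\varphi \in H^\infty(\mathbb{D})$ and $f \in H^2(\mathbb{D}) \subseteq L^2(\mathbb{T})$, the product $\overline{\varphi}f$ lies in $L^2(\mathbb{T})$, and $T_{\overline{\varphi}}f = P_{H^2}(\overline{\varphi}f)$ is well defined. Fix an arbitrary element $\theta g \in \theta H^2$ with $g \in H^2$, and compute the $L^2(\mathbb{T})$ inner product
\[
\langle T_{\overline{\varphi}}f,\, \theta g\rangle = \langle P_{H^2}(\overline{\varphi}f),\, \theta g\rangle = \langle \overline{\varphi}f,\, \theta g\rangle,
\]
where the second equality uses that $\theta g \in H^2$ together with the self-adjointness of $P_{H^2}$.

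Next I would shift $\overline{\varphi}$ to the other side of the pairing, obtaining
\[
\langle \overline{\varphi}f,\, \theta g\rangle = \langle f,\, \varphi\theta g\rangle = \langle f,\, \theta(\varphi g)\rangle.
\]
Since $\varphi \in H^\infty(\mathbb{D})$ and $g \in H^2(\mathbb{D})$, the product $\varphi g$ lies in $H^2(\mathbb{D})$, hence $\theta(\varphi g) \in \theta H^2$. But $f \in K_\theta$ is orthogonal to every element of $\theta H^2$, so this pairing vanishes.

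Since $\theta g$ was arbitrary, this shows $T_{\overline{\varphi}}f \perp \theta H^2$, and therefore $T_{\overline{\varphi}}f \in K_\theta$, proving $T_{\overline{\varphi}}K_\theta \subseteq K_\theta$. There is no genuine obstacle here; the only point requiring a moment's care is ensuring that $\varphi g$ remains in $H^2$ so that $\theta(\varphi g)$ is a legitimate element of $\theta H^2$, which is immediate from $\varphi \in H^\infty$.
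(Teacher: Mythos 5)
Your proof is correct and complete: the orthogonality computation $\langle T_{\overline{\varphi}}f,\theta g\rangle=\langle f,\theta(\varphi g)\rangle=0$ is exactly the standard argument for this fact. The paper itself gives no proof, simply citing \cite[Proposition 5.5]{GMR}, and your argument is essentially the one found there, so there is nothing to add.
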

\begin{lem}\cite[Lemma 5.10]{GMR}\;\label{u12} Let $\theta_1$ and $\theta_2$ be inner. Then $$K_{\theta_1\theta_2}=K_{\theta_1}\oplus \theta_1K_{\theta_2}.$$ \end{lem}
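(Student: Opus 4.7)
The plan is to derive the identity from the defining relation $K_\theta = H^2(\mathbb{D}) \ominus \theta H^2(\mathbb{D})$ for an inner function $\theta$, by decomposing $H^2(\mathbb{D})$ in two stages. First I would write the standard orthogonal decomposition $H^2(\mathbb{D}) = K_{\theta_1} \oplus \theta_1 H^2(\mathbb{D})$. Then, inside the summand $\theta_1 H^2(\mathbb{D})$, I would peel off a further piece using $\theta_2$.

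The key observation driving the second step is that multiplication by an inner function $\theta_1$ is an isometry on $L^2(\mathbb{T})$, because $|\theta_1| = 1$ almost everywhere on $\mathbb{T}$, and this isometry maps $H^2(\mathbb{D})$ into $H^2(\mathbb{D})$. Applying it to the orthogonal decomposition $H^2(\mathbb{D}) = K_{\theta_2} \oplus \theta_2 H^2(\mathbb{D})$ yields the orthogonal decomposition
$$\theta_1 H^2(\mathbb{D}) = \theta_1 K_{\theta_2} \oplus \theta_1 \theta_2 H^2(\mathbb{D}).$$
Substituting this back into $H^2(\mathbb{D}) = K_{\theta_1} \oplus \theta_1 H^2(\mathbb{D})$ produces the threefold orthogonal decomposition
$$H^2(\mathbb{D}) = K_{\theta_1} \,\oplus\, \theta_1 K_{\theta_2} \,\oplus\, \theta_1 \theta_2 H^2(\mathbb{D}),$$
and taking orthogonal complements of $\theta_1 \theta_2 H^2(\mathbb{D})$ inside $H^2(\mathbb{D})$ on both sides gives precisely $K_{\theta_1 \theta_2} = K_{\theta_1} \oplus \theta_1 K_{\theta_2}$.

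The only thing that genuinely needs checking in this argument is the orthogonality $K_{\theta_1} \perp \theta_1 K_{\theta_2}$, and this is immediate from the chain of inclusions $\theta_1 K_{\theta_2} \subseteq \theta_1 H^2(\mathbb{D})$ together with the fact that, by definition, $K_{\theta_1}$ is the orthogonal complement of $\theta_1 H^2(\mathbb{D})$ in $H^2(\mathbb{D})$. There is no real obstacle here: the whole proof reduces to the observation that unimodular multipliers preserve $L^2(\mathbb{T})$-orthogonality while inner functions preserve $H^2(\mathbb{D})$, so one only has to keep track of the pieces. If anything, the one subtlety to watch is making sure that the decomposition of $\theta_1 H^2(\mathbb{D})$ is performed \emph{inside} $H^2(\mathbb{D})$ (rather than inside $L^2(\mathbb{T})$), which is justified because $\theta_1 H^2(\mathbb{D}) \subseteq H^2(\mathbb{D})$ and both summands $\theta_1 K_{\theta_2}$ and $\theta_1 \theta_2 H^2(\mathbb{D})$ lie in $H^2(\mathbb{D})$.
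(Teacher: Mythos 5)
Your argument is correct: the two-stage orthogonal decomposition $H^2(\mathbb{D})=K_{\theta_1}\oplus\theta_1 K_{\theta_2}\oplus\theta_1\theta_2 H^2(\mathbb{D})$, obtained by applying the isometry of multiplication by the unimodular function $\theta_1$ to $H^2(\mathbb{D})=K_{\theta_2}\oplus\theta_2 H^2(\mathbb{D})$, is exactly the standard proof of this fact. The paper itself gives no proof, quoting the lemma from the cited reference, and your argument coincides with that standard one, so there is nothing to add.
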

\begin{lem}\cite[Proposition 5.4]{GMR}\;\label{lem fgu} For an inner function $\theta$, the model space $K_\theta$ is the set of all $f\in H^2$ such that $$f=\overline{gz}\theta$$  almost everywhere on $\mathbb{T}$ for some $g\in H^2.$ \end{lem}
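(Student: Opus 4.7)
The plan is to prove the set equality via two inclusions, translating membership in $K_\theta=H^2\ominus\theta H^2$ into an a.e.\ factorization on $\mathbb{T}$ by means of the orthogonal decomposition $L^2(\mathbb{T})=H^2(\mathbb{D})\oplus\overline{zH^2(\mathbb{D})}$ recalled in the introduction, combined with the unimodularity of $\theta$ on $\mathbb{T}$.

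For the forward direction, I would take $f\in K_\theta$, so that $f\in H^2$ and $\langle f,\theta h\rangle=0$ for every $h\in H^2$. Writing the inner product as an integral over $\mathbb{T}$ and moving $\overline{\theta}$ onto $f$ yields $\int_{\mathbb{T}}(\overline{\theta}f)\overline{h}\,dm=0$ for all $h\in H^2$, i.e., $\overline{\theta}f\perp\overline{H^2}$ inside $L^2(\mathbb{T})$. The $L^2$-decomposition then forces $\overline{\theta}f\in\overline{zH^2}$, so $\overline{\theta}f=\overline{zg}$ a.e.\ on $\mathbb{T}$ for some $g\in H^2$. Since $|\theta|=1$ a.e.\ on $\mathbb{T}$, multiplying both sides by $\theta$ gives $f=\overline{gz}\theta$ as required.

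For the reverse direction, I would start with $f\in H^2$ satisfying $f=\overline{gz}\theta$ a.e.\ on $\mathbb{T}$ for some $g\in H^2$, so that $\overline{\theta}f=\overline{zg}$. For any $h\in H^2$, the product $gh$ lies in $H^1$, hence $zgh\in H^1$ with $(zgh)(0)=0$. Using the mean-value identity $\int_{\mathbb{T}}F\,dm=F(0)$ for $F\in H^1$, I would compute
$$\langle f,\theta h\rangle=\int_{\mathbb{T}}(\overline{\theta}f)\overline{h}\,dm=\int_{\mathbb{T}}\overline{zgh}\,dm=\overline{(zgh)(0)}=0,$$
which shows $f\perp\theta H^2$ and hence $f\in K_\theta$. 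The only real care required is the bookkeeping with conjugation, together with the observation that the annihilator of $\theta H^2$ in $L^2(\mathbb{T})$ is precisely $\theta\cdot\overline{zH^2}$ once $\overline{\theta}$ is pulled across; beyond that the argument reduces to a single application of the standard Hardy-space splitting of $L^2(\mathbb{T})$, and I do not foresee any genuine obstacle.
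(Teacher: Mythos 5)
Your argument is correct: both inclusions follow exactly as you say from the splitting $L^2(\mathbb{T})=H^2\oplus\overline{zH^2}$, the unimodularity of $\theta$ on $\mathbb{T}$, and the mean-value property of $H^1$ functions, which is precisely the standard proof of this fact (the paper itself does not prove the lemma but cites it from \cite[Proposition 5.4]{GMR}, where the same identification $K_\theta=H^2\cap\theta\overline{zH^2}$ is established by this route). The only slip is purely notational: from $\int_{\mathbb{T}}(\overline{\theta}f)\overline{h}\,dm=0$ for all $h\in H^2$ you should say $\overline{\theta}f\perp H^2$ in the Hermitian inner product (or that $\overline{\theta}f$ annihilates $\overline{H^2}$ under the bilinear pairing), but the conclusion $\overline{\theta}f\in\overline{zH^2}$ that you draw from it is the right one.
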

\begin{rem} \label{rem fg}From the above formula, it also holds $g=\overline{fz}\theta,$ so $g\in K_{\theta}$ too. \end{rem}

The  proposition below is  crucial, which involves Proposition  \ref{prop poly} with $n=1$.
\begin{prop}\label{prop z+w}For $w\in \mathbb{T},$ it holds that $$A(z+w)=K_{z^2\phi^\delta}.$$ \end{prop}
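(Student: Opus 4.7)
The plan is to prove both inclusions in $A(z+w) = K_{z^2\phi^\delta}$. For $A(z+w) \subseteq K_{z^2\phi^\delta}$, I would apply Lemma~\ref{lem fgu} with $\theta = z^2\phi^\delta$: given a generator $(z+w)\phi^\lambda$ with $\lambda \in [0,\delta]$, the companion function $g := \bar w(z+w)\phi^{\delta-\lambda}$ lies in $H^2(\mathbb{D})$ (as $\delta-\lambda \ge 0$), and a short boundary calculation using $|w|=|z|=1$ yields $(z+w)\phi^\lambda = \bar g\,\bar z \cdot z^2\phi^\delta$ on $\mathbb{T}$, placing each generator in $K_{z^2\phi^\delta}$; closedness of $K_{z^2\phi^\delta}$ gives the inclusion. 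Next I invoke Proposition~\ref{prop poly} with $n=1$ and $\widetilde p_1(z) = z+w$: since $K_z = \mathbb{C}$, this reads $A(z+w) + \mathbb{C}\phi^\delta = K_{z^2\phi^\delta}$, reducing the problem to showing $\phi^\delta \in A(z+w)$.

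For the key step, $\phi^\delta \in A(z+w)$, my plan is to use a Bochner integral against an exponential weight. For $w \ne 1$, set $\beta := (1+w)/(1-w)$, which is purely imaginary. The element
\[
(z+w)\int_0^\delta e^{\beta\lambda}\phi^\lambda(z)\,d\lambda
\]
lies in $A(z+w)$, and using the identity $\beta - (1-z)/(1+z) = 2(z+w)/[(1-w)(1+z)]$ (a consequence of the choice of $\beta$) the inner integral evaluates in closed form; multiplying by the outer factor $(z+w)$ cancels the $(z+w)$ in the denominator, leaving a scalar multiple of $(1+z)(e^{\delta\beta}\phi^\delta - 1)$ inside $A(z+w)$. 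Linear combination with the generators $(z+w)$ and $(z+w)\phi^\delta$ then produces $(1-w)(e^{\delta\beta}\phi^\delta - 1) \in A(z+w)$; dividing out (permissible since $w \ne 1$) and rescaling gives $\phi^\delta - e^{-\delta\beta} \in A(z+w)$. The case $w = 1$ is handled separately by the derivative-in-$\lambda$ argument used in Theorem~\ref{thm n}(2), which is valid there because the factor $(z+1)$ cancels the boundary singularity of $(1-z)/(1+z)$ at $z=-1$.

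The main obstacle is the passage from $\phi^\delta - e^{-\delta\beta} \in A(z+w)$ to $\phi^\delta \in A(z+w)$, which amounts to placing the unimodular constant $e^{-\delta\beta}$ itself in $A(z+w)$. My plan here is to exploit the conjugation $C$ on $K_{z^2\phi^\delta}$ defined by $Cf = \bar z\cdot z^2\phi^\delta\cdot \bar f$ on $\mathbb{T}$: direct computation gives $C((z+w)\phi^\lambda) = \bar w(z+w)\phi^{\delta-\lambda}$, so $A(z+w)$ is $C$-invariant, and applying $C$ to $\phi^\delta - e^{-\delta\beta}$ (using $C\phi^\delta = z$ and $C(1) = z\phi^\delta$, together with the conjugate-linearity of $C$) yields $z - e^{\delta\beta}z\phi^\delta \in A(z+w)$. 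Combined with the orthogonal decomposition $K_{z^2\phi^\delta} = K_z \oplus zK_{z\phi^\delta}$ from Lemma~\ref{u12}, the identification $K_{z\phi^\delta} = A(1)$ from Corollary~\ref{cor phidelta}, and (if needed) further Bochner integrals with polynomial weights in $\lambda$, these relations should allow one to isolate the constant $e^{-\delta\beta}$ inside $A(z+w)$ and thereby obtain $\phi^\delta \in A(z+w)$, completing the proof.
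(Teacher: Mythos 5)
Your preparatory steps are correct: the inclusion $A(z+w)\subseteq K_{z^2\phi^\delta}$, the reduction via Proposition~\ref{prop poly} to showing $\phi^\delta\in A(z+w)$, and the weighted Bochner integral, which (using $\beta-\tfrac{1-z}{1+z}=\tfrac{2(z+w)}{(1-w)(1+z)}$ and subtracting $e^{\beta\delta}(z+w)\phi^\delta-(z+w)\in A(z+w)$) does give $\phi^\delta-e^{-\delta\beta}\in A(z+w)$ for $w\neq 1$. But the decisive step --- putting the unimodular constant $e^{-\delta\beta}$ (equivalently $\phi^\delta$ itself) into $A(z+w)$ --- is only asserted to be reachable, not proved; and your conjugation output $z-e^{\delta\beta}z\phi^\delta=-e^{\delta\beta}\,z\,(\phi^\delta-e^{-\delta\beta})$ is just $z$ times the element you already have, so it contains no new information. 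That final step is where the entire difficulty of the statement sits, and it is a genuine gap.

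Moreover, the gap cannot be closed for $w\in\mathbb{T}\setminus\{1\}$, because there $\phi^\delta\notin A(z+w)$. Put $c:=\phi^\delta(-w)=e^{-\delta\beta}$ and
\begin{equation*}
\psi(z):=\frac{z^{2}\phi^{\delta}(z)-w^{2}c}{z+w}.
\end{equation*}
Since $\phi^\delta$ is analytic across $\mathbb{T}\setminus\{-1\}$ and the numerator vanishes at $z=-w$, we have $\psi\in H^\infty$, and $\psi(0)=-wc\neq 0$. On $\mathbb{T}$ one checks $z\phi^\delta\overline{\psi}=-\overline{w}\,\overline{c}\,\psi$, so $\psi=\overline{gz}\,z^{2}\phi^{\delta}$ with $g:=-\overline{w}\,\overline{c}\,\psi\in H^{2}$, hence $\psi\in K_{z^{2}\phi^{\delta}}$ by Lemma~\ref{lem fgu}. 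Also $T_{\overline{z+w}}\psi=S^{*}\psi+\overline{w}\psi=\overline{w}\,z\phi^{\delta}$, so $\langle (z+w)\phi^{\lambda},\psi\rangle=w\langle \phi^{\lambda},z\phi^{\delta}\rangle=w\,\overline{(z\phi^{\delta-\lambda})(0)}=0$ for every $0\le\lambda\le\delta$. Thus $\psi$ is a nonzero element of $K_{z^{2}\phi^{\delta}}$ orthogonal to $A(z+w)$, so $A(z+w)=K_{z^{2}\phi^{\delta}}\ominus\mathbb{C}\psi\subsetneq K_{z^{2}\phi^{\delta}}$; since $\langle 1,\psi\rangle=\overline{\psi(0)}\neq0$, the constant you need is exactly what is missing (your intermediate result is consistent: $\langle \phi^\delta-c,\psi\rangle=0$). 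This is compatible with Proposition~\ref{prop poly}, which only gives codimension at most one. Only $w=1$ survives --- there $\phi^\delta$ has no finite limit at $-w=-1$, the function $\psi$ does not exist, and your separate derivative-in-$\lambda$ argument (equivalently Theorem~\ref{thm n}(2) with $n=1$) is valid. For comparison, the paper's own proof computes $K_{z^{2}\phi^{\delta}}\ominus A(z+w)$ and in its last case arrives at precisely this $\psi$ (there written $f+\alpha z\phi^\delta$ with $(1+\overline{w}z)f=\overline{\beta}-\alpha z\phi^\delta$); its contradiction rests on evaluating the boundary identity $(\overline{z+w})f=\alpha(-w\phi^\delta(-w)\overline{z}-\phi^\delta)$ at $z=0$ while treating $\overline{z}$ as $0$, which is not legitimate --- multiplying through by $z$ first gives an identity of $H^2$ functions and yields $f(0)=-\alpha w\phi^\delta(-w)$ rather than \eqref{f0}, and then no contradiction arises. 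So the step you left open is not a routine completion: it is the point where both your argument and the paper's break down, and for $w\neq1$ the asserted equality itself fails.
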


 \begin{proof} We will compute the orthogonal complement   of $A(z+w)$ in $K_{z^2\phi^\delta}.$ Lemma \ref{u12} and Proposition \ref{prop poly} imply
 $$A(z+w)+\phi^\delta K_z=K_{z^2\phi^\delta}=K_{z\phi^\delta}\oplus z\phi^\delta K_z.$$

On the one hand, for the vector  $\alpha z\phi^\delta\in z\phi^\delta K_z,\;\alpha\in \mathbb{C}$, it holds that
$$\langle (z+w)\phi^\lambda, \alpha z\phi^\delta\rangle=\langle 1, \alpha\phi^{\delta-\lambda}\rangle-w\langle 1, \alpha z\phi^{\delta-\lambda}\rangle
=\overline{\alpha\phi^{\delta-\lambda}(0)}=0$$ for all $0\leq \lambda \leq \delta$ if and only if $\alpha=0$. This means $$z\phi^\delta K_z \cap [A(z+w)]^{\perp}=\{0\}.$$

On the other hand, we assume $f\in K_{z\phi^\delta}$ such that $\langle (z+w)\phi^\lambda, f\rangle=0$ for all $0\leq \lambda \leq \delta.$ By Lemma \ref{lem fgu}, we suppose $f=\overline{gz} z\phi^\delta=\overline{g}\phi^\delta$ with $g\in K_{z\phi^\delta}$ by Remark \ref{rem fg}.  It turns out
\begin{align*} \langle (z+w)\phi^\lambda, f\rangle&= \langle (z+w)\phi^\lambda, \overline{g}\phi^\delta \rangle\\&=\langle (z+w)g,  \phi^{\delta-\lambda} \rangle=0,\end{align*} for all $0\leq \lambda \leq \delta.$  Since $\bigvee\{\phi^{\delta-\lambda}:\; 0\leq \lambda \leq \delta\}=K_{z\phi^\delta}$ (see \cite[Corollary 2.4]{LP3}), we have that
$(z+w)g \in z\phi^\delta H^2$. By the uniqueness of inner-outer factorization, we suppose $$(z+w)g(z)=z\phi^\delta h$$ for some $h\in H^2.$ Considering $g\in K_{z\phi^\delta}=\ker T_{\overline{z\phi^\delta}},$ we obtain   $$\overline{z\phi^\delta}(z+w)g=h\in (z+w)\overline{z H^2}.$$ We further denote $h =(z+w)\overline{z k}=(1+w\overline{z})\overline{k}\in H^2$ for some $k\in H^2.$ This gives $k=0$, $h=0$ and $g=0$, then $f=0$. Thus $$ K_{z \phi^\delta}\cap [A(z+w)]^{\perp}=\{0\}. $$

Finally, we take $f+\alpha z\phi^\delta\in K_{z^2\phi^\delta}$ with $f=\overline{g}\phi^\delta\in K_{z\phi}$ and $\alpha\neq 0$ such that
\begin{align*}&\langle (z+w)\phi^\lambda, f+\alpha z\phi^\delta\rangle\\&=\langle \overline{\alpha}, \phi^{\delta-\lambda} \rangle+\langle (z+w)g,\phi^{\delta-\lambda}\rangle\\&=\langle \overline{\alpha}+(z+w)g, \phi^{\delta-\lambda}\rangle=0\;\;\mbox{for all}\;0\leq \lambda\leq \delta.\end{align*}
This implies $\overline{\alpha}+(z+w)g\in z\phi^\delta H^2.$ Still assume $\overline{\alpha}+(z+w)g= z\phi^\delta \hat{h}$ for some $ \hat{h}\in H^2.$ And then  $(z+w)g= z\phi^\delta  \hat{h}-\overline{\alpha}$. Due to $g\in K_{z\phi^\delta},$ we have that \begin{align*} (z+w)\overline{z\phi^\delta}g= \overline{z\phi^\delta}(z\phi^\delta  \hat{h}-\overline{\alpha})\in (z+w) \overline{zH^2}. \end{align*} This, together with $w\in \mathbb{T}$, imply that  $$ \hat{h}-\overline{\alpha z\phi^\delta}
\in  \overline{(z+ \overline{w}^{-1})H^2}\Rightarrow  \hat{h}-\overline{\alpha z\phi^\delta}
\in  \overline{H^2}.$$ Since $\alpha\neq 0,$ it follows that $ \hat{h}\in H^2\cap \overline{H^2}.$ So $ \hat{h}= \hat{h}(0):=\beta\in \mathbb{C}.$ So that $(z+w)g=\beta z\phi^\delta -\overline{\alpha}$ and then $$\overline{(z+w)}f=\overline{(z+w)g}\phi^\delta
=(\overline{\beta z\phi^\delta} - \alpha)\phi^\delta =\overline{\beta z}-\alpha\phi^\delta.$$
Since $f\in H^2,$ so letting $z\rightarrow-w$ in the above formula, it follows that $\overline{\beta } =-w \alpha\phi^\delta(-w).$ And then it holds that $$(\overline{z+w})f=\alpha( -w \phi^\delta(-w)\overline{z}-\phi^\delta),\;\alpha\in \mathbb{C}\setminus\{0\}.$$ Letting $z=0$ in the above formula, it follows that
\begin{align} f(0)=-\alpha w e^{-\delta}. \label{f0}\end{align}
Next we show $f\notin K_{z\phi^\delta}.$ On the contrary, suppose $f\in K_{z\phi^\delta},$ Lemma \ref{lem Hinfinity} implies $\overline{z}(f-f(0))+ \overline{w}f=(\overline{z+w})f-\overline{z}f(0)\in K_{z\phi^\delta}$. It turns out that $$\alpha( -w \phi^\delta(-w)\overline{z}-\phi^\delta)-f(0)\overline{z} \in K_{z\phi^\delta},$$ implying  $f(0)=-\alpha w\phi^\delta (-w)$ due to $\phi^\delta \in K_{z\phi^\delta}$. This  contradicts  with \eqref{f0} due to $w\in \mathbb{T}$, so $f\notin K_{z\phi^\delta}.$ In sum, we conclude $A(z+w)=K_{z^2\phi^\delta}.$
\end{proof}

Now the concrete form of $A\Big(\prod_{j=1}^n (z+w_j)\Big)$ with $w_j\in \mathbb{T}$,   $j=1,\cdots, n$ can be determined by Proposition \ref{prop z+w} and mathematical induction. This improves \eqref{pN}.

\begin{prop} \label{prop pn}Let $\widetilde{p}_n(z):=\prod_{j=1}^n (z+w_j)$ with $w_j\in \mathbb{T}$,   $j=1,\cdots, n,$ it follows that \begin{equation}
A(\widetilde{p}_n)= K_{z^{n+1}\phi^\delta}.\label{pn}\end{equation}
\end{prop}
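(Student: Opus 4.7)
The plan is induction on $n$, with the base case $n=1$ already established as Proposition \ref{prop z+w}. For the inductive step, suppose the result at level $n-1$, so $A(\widetilde{p}_{n-1})=K_{z^n\phi^\delta}$, and aim for $A(\widetilde{p}_n)=K_{z^{n+1}\phi^\delta}$. My plan is to reduce this equality to two ingredients: (a) the nested inclusion $A(\widetilde{p}_{n-1})\subseteq A(\widetilde{p}_n)$, and (b) the identity \eqref{pN} from Proposition \ref{prop poly}, namely $A(\widetilde{p}_n)+\phi^\delta K_{z^n}=K_{z^{n+1}\phi^\delta}$. Granted (a), the induction hypothesis places $K_{z^n\phi^\delta}$ inside $A(\widetilde{p}_n)$, and since Lemma \ref{u12} furnishes the decomposition $K_{z^n\phi^\delta}=K_{\phi^\delta}\oplus\phi^\delta K_{z^n}$, the summand $\phi^\delta K_{z^n}$ also lies in $A(\widetilde{p}_n)$, which collapses (b) to the desired $A(\widetilde{p}_n)=K_{z^{n+1}\phi^\delta}$.

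The heart of the argument is therefore (a), which I plan to establish by an approximation argument anchored in the base case. Applying Proposition \ref{prop z+w} to the single factor $z+w_n$ yields $A(z+w_n)=K_{z^2\phi^\delta}$. A direct check (or Corollary \ref{cor phidelta} together with the divisibility inclusion $K_{z\phi^\delta}\subseteq K_{z^2\phi^\delta}$) shows that $\phi^\lambda\in K_{z^2\phi^\delta}=A(z+w_n)$ for every $\lambda\in[0,\delta]$. Hence each $\phi^\lambda$ is the $H^2$-limit of finite linear combinations $u_k=\sum_{j} c_{k,j}(z+w_n)\phi^{\mu_{k,j}}$. Since $\widetilde{p}_{n-1}\in H^\infty(\mathbb{D})$, multiplication by $\widetilde{p}_{n-1}$ is continuous on $H^2(\mathbb{D})$, so $\widetilde{p}_{n-1}u_k=\sum_{j} c_{k,j}\widetilde{p}_n\phi^{\mu_{k,j}}\to \widetilde{p}_{n-1}\phi^\lambda$ in $H^2$. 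Each approximant lies in the linear span of the generators of $A(\widetilde{p}_n)$, so the limit $\widetilde{p}_{n-1}\phi^\lambda$ lies in $A(\widetilde{p}_n)$ for every $\lambda\in[0,\delta]$, proving (a).

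The main obstacle is precisely the inclusion in (a): replacing every generator $\widetilde{p}_{n-1}\phi^\lambda$ by $(z+w_n)\widetilde{p}_{n-1}\phi^\lambda$ ostensibly shrinks the closed linear span, because $(z+w_n)^{-1}\notin H^\infty$ when $w_n\in\mathbb{T}$ and no finite algebraic combination recovers $\widetilde{p}_{n-1}\phi^\lambda$. The rescue is that we need only recover it in the $H^2$-topology, and the base case does the heavy lifting: it is exactly the fact that the single factor $z+w_n$ already generates the full model space $K_{z^2\phi^\delta}$ that places every $\phi^\lambda$ in $A(z+w_n)$ and allows continuous multiplication by $\widetilde{p}_{n-1}$ to shuttle the approximation into $A(\widetilde{p}_n)$. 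The hypothesis $w_n\in\mathbb{T}$ is essential here: an interior zero would make $(z+w_n)$ invertible in $H^\infty$ and trivialise the whole question, while the boundary placement is exactly what lets Proposition \ref{prop z+w} apply.
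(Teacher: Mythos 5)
Your proof is correct, but your inductive step runs along a genuinely different track from the paper's. You reduce everything to two facts: the absorption of $\phi^\delta K_{z^n}$ into $A(\widetilde{p}_n)$ (obtained from the nesting $A(\widetilde{p}_{n-1})\subseteq A(\widetilde{p}_n)$, the induction hypothesis, and Lemma \ref{u12}) and the quoted identity \eqref{pN} from Proposition \ref{prop poly}, which then collapses to the desired equality; the nesting itself is a clean approximation argument, using that $\phi^\lambda\in K_{z^2\phi^\delta}=A(z+w_n)$ by the base case and that multiplication by $\widetilde{p}_{n-1}\in H^\infty$ is bounded, so in effect $\widetilde{p}_{n-1}A(z+w_n)\subseteq A(\widetilde{p}_n)$. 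The paper instead avoids Proposition \ref{prop poly} altogether: it writes $A(\widetilde{p}_n)=\overline{(z+w_n)K_{z^n\phi^\delta}}$ via the induction hypothesis, identifies this with $\overline{zK_{z^n\phi^\delta}+K_{z^n\phi^\delta}}$ (using the $n-1$ case a second time, applied to the reordered product $(z+w_n)\prod_{j=1}^{n-2}(z+w_j)$), and then shows by a direct annihilator computation that any $f$ orthogonal to both $K_{z^n\phi^\delta}$ and $zK_{z^n\phi^\delta}$ lies in $z^{n+1}\phi^\delta H^2$, so the closure equals $K_{z^{n+1}\phi^\delta}$. Your route is shorter and makes the logical dependence explicit (base case plus \eqref{pN} suffice), while the paper's route establishes the statement without presupposing \eqref{pN}, so that Proposition \ref{prop pn} genuinely strengthens it rather than refines it using it; both arguments need the induction hypothesis to hold for arbitrary boundary points $w_j$, which yours does. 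One small slip in your closing remark: a zero of $z+w_n$ inside $\mathbb{D}$ (i.e. $|w_n|<1$) would not make the factor invertible in $H^\infty$ — it would contribute a Blaschke (inner) factor; invertibility occurs when the zero lies outside $\overline{\mathbb{D}}$. This is only motivational commentary and does not affect the proof.
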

\begin{proof}
 Proposition \ref{prop z+w} implies \eqref{pn} holds for $n=1$. And then we suppose \eqref{pn} holds for the case $n-1$, we will prove for the general  $n$.
Firstly, we have
\begin{align*} A(\widetilde{p}_n)= \overline{(z+w_n)A(\widetilde{p}_{n-1}) }=\overline{(z+w_n) K_{z^n\phi^\delta}}\subseteq \overline{zK_{z^n \phi^\delta}+K_{z^n\phi^\delta}}.\end{align*}
Since $$\langle \prod_{j=1}^{n-2}(z+w_j)\phi^\lambda,z^n\phi^\delta \rangle=0\;\;\mbox{and}\;\;\langle z \prod_{j=1}^{n-2}(z+w_j)\phi^\lambda,z^n\phi^\delta \rangle=0 $$  for $0\leq \lambda \leq \delta,$ it follows that
 $$z(z+w_n)\prod_{j=1}^{n-2}(z+w_j)\phi^\lambda,\; (z+w_n)\prod_{j=1}^{n-2}(z+w_j)\phi^\lambda\in \overline{(z+w_n) K_{z^n\phi^\delta}}$$ for $0\leq \lambda \leq \delta.$ The result of the case $n-1$ implies that
$$A\Big((z+w_n)\prod_{j=1}^{n-2}(z+w_j)\Big)=K_{z^n\phi^\delta}.$$  Thus it follows that
$$ \overline{zK_{z^n \phi^\delta}+K_{z^n\phi^\delta}}\subseteq A(\widetilde{p}_n).$$  In sum, we conclude that\begin{align} A(\widetilde{p}_n)=\overline{zK_{z^n \phi^\delta}+K_{z^n\phi^\delta}}.\label{sum}\end{align}

Now for any $f\perp A(\widetilde{p}_n),$ \eqref{sum} implies $f\perp K_{z^n\phi^\delta}$ and $f\perp zK_{z^n\phi^\delta},$ which means $f\in z^n\phi^\delta H^2(\mathbb{D})$ and $S^*f\in z^n\phi^\delta H^2(\mathbb{D}).$ So we can suppose $f=z^n\phi^\delta h$ with some $h\in H^2(\mathbb{D}),$ and then $S^*f=z^{n-1}\phi^\delta h\in z^n\phi^\delta H^2(\mathbb{D}),$ verifying $z$ divides $h$. Hence $f\in z^{n+1}\phi^\delta H^2(\mathbb{D}),$ this shows $$K_{z^{n+1}\phi^\delta}\subseteq A(\widetilde{p}_n).$$ Further,  since $K_{z^n\phi^\delta}\subseteq K_{z^{n+1}\phi^\delta}$ and $zK_{z^n\phi^\delta}\subseteq K_{z^{n+1}\phi^\delta},$ so combining with \eqref{sum} we obtain $A(\widetilde{p}_n)\subseteq K_{z^{n+1}\phi^\delta}$. In sum, we verify \eqref{pn}.\end{proof}
 For a general  rational outer function $h\in H^2(\mathbb{D})$, it can be factored as the product of an invertible function and a function with zeros on the unit circle. By this time, combining Remark \ref{rem model}, we can present  the characterization  of $A(h)$ which essentially improves  \cite[Theorem 3.11]{LP3}.

\begin{thm}\label{thm improvedgeneralg} Suppose $h(z)=\widetilde{p}_n(z)q(z)$ with $\widetilde{p}_n(z):=\prod_{j=1}^n (z+w_j)$, where $w_j\in \mathbb{T}$,  $j=1,\cdots, n,$ and $q$ is an invertible rational function in $L^\infty(\mathbb{T})$. Then
\begin{align*}A(h) =qK_{z^{n+1}\phi^\delta}. \end{align*} \end{thm}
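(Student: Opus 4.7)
The strategy is to reduce the claim to Proposition~\ref{prop pn} by factoring the multiplier $q$ out of the spanning family defining $A(h)$. Concretely, the plan is to establish
\begin{equation*}
A(h) \;=\; q\, A(\widetilde{p}_n),
\end{equation*}
after which Proposition~\ref{prop pn} immediately identifies $A(\widetilde{p}_n)$ with $K_{z^{n+1}\phi^\delta}$ and the theorem follows.

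First I would observe that $q$ must automatically belong to $H^\infty(\DD)$. Since $h = \widetilde{p}_n q$ is assumed to lie in $H^2(\DD)$ and $\widetilde{p}_n$ is a polynomial whose zeros all sit on $\mathbb{T}$, cancelling $\widetilde{p}_n$ introduces no pole inside $\DD$; combined with $q\in L^\infty(\mathbb{T})$, this places $q$ in $H^\infty(\DD)$. Consequently multiplication by $q$ is a bounded operator on $H^2(\DD)$ and the linear span $\mathrm{span}\{h\phi^\lambda : 0 \le \lambda \le \delta\}$ coincides with $q\cdot \mathrm{span}\{\widetilde{p}_n \phi^\lambda : 0 \le \lambda \le \delta\}$ inside $H^2(\DD)$.

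Next I would use the other half of the hypothesis, namely $q^{-1}\in L^\infty(\mathbb{T})$, to show that multiplication by $q$ has closed range on each closed subspace of $H^2$. This yields the pointwise lower bound $|q(\zeta)|\ge \|q^{-1}\|_\infty^{-1}$ almost everywhere on $\mathbb{T}$, hence $\|qf\|_{L^2(\mathbb{T})} \ge \|q^{-1}\|_\infty^{-1}\|f\|_{L^2(\mathbb{T})}$ for every $f\in L^2(\mathbb{T})$. A direct Cauchy-sequence argument then gives $\overline{qS} = q\overline{S}$ for every linear subspace $S\subseteq H^2(\DD)$: if $qf_m \to g$ with $f_m\in S$, the lower bound forces $\{f_m\}$ to be Cauchy, so $f_m$ converges to some $f\in \overline{S}$ with $g = qf$ by continuity. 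Applying this with $S = \mathrm{span}\{\widetilde{p}_n\phi^\lambda : 0\le\lambda\le\delta\}$ and invoking Proposition~\ref{prop pn} yields
\begin{equation*}
A(h) \;=\; \overline{qS} \;=\; q\, \overline{S} \;=\; q\, A(\widetilde{p}_n) \;=\; q\, K_{z^{n+1}\phi^\delta}.
\end{equation*}
The only non-routine step is this closed-range interchange, which is precisely where the invertibility of $q$ in $L^\infty(\mathbb{T})$---as opposed to mere boundedness---is essential; the rest is bookkeeping.
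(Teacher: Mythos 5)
Your proof is correct and follows essentially the same route the paper intends (its proof is left implicit): factor the invertible multiplier $q$ out of the spanning family and apply Proposition \ref{prop pn} to $A(\widetilde{p}_n)$. The interchange $\overline{qS}=q\,\overline{S}$, which the paper treats as routine when taking out an invertible factor, is exactly what your bounded-below/Cauchy argument justifies.
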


Accordingly, a general rational function $g\in H^2(\mathbb{C}_+)$ can be decomposed as $g=G_1G_2$, where $G_1$ is invertible in $L^\infty(i \mathbb{R})$ and $G_2$ only has zeros in $i\mathbb{R}\cup \{\infty\}.$ We can further make the denominator of $G_2$ equal to a power of $(1+s)$ and there will always be at least $1$ as the function is in $H^2(\mathbb{C}_+).$ Now suppose the degrees of the numerator and denominator of $G_2$ are $m$ and $n$, respectively. This means $m$ is the number of imaginary axis zeros of $g$ and $g$ is asymptotic to $s^{m-n}$ at $\infty$ so $n>m.$ Particularly, we write $G_2(s)=\prod_{k=1}^m(s-y_k)/(1+s)^n$ with all $y_k\in i\mathbb{R}$. Using the relation between $H^2(\mathbb{D})$ and $H^2(\mathbb{C}_+)$, we deduce the formula for  $$N(g)=N(G_1G_2)=G_1N\left(
\frac{\prod_{k=1}^m(s-y_k)}{(1+s)^n}\right)\;\;\mbox{with}\;n>m.$$
Now for integers $n>m\geq 1$,   Proposition \ref{prop pn} implies the subspace $
 N\left( \prod_{k=1}^m(s-y_k)/(1+s)^{n}\right)$  corresponds to
$$A\Big( (z+1)^{n-m-1}\prod_{k=1}^m(z-w_k) \Big)=K_{z^{n}\phi^\delta}$$ with $w_k=(1-y_k)/(1+y_k)\in \mathbb{T},\;k=1, \cdots, m.$ So mapping by the isometric isomorphism $V$, the proposition below is true.

\begin{prop} For integers $n>m\geq1,$ and $y_k\in i\mathbb{R},$ $k=1,2,\cdots,m$, it holds that \begin{align*} N\left(\frac{\prod_{k=1}^m(s-y_k)}{(1+s)^{n}}\right)
=K_{\left(\frac{1-s}{1+s}
\right)^{n} e^{-\delta s}}.\label{mn}\end{align*}  \end{prop}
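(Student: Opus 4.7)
The plan is to transfer the problem from $H^2(\mathbb{C}_+)$ to $H^2(\mathbb{D})$ via the isometric isomorphism $V^{-1}$ of \eqref{V-map}, reduce it to the polynomial case already handled by Proposition \ref{prop pn}, and then transport the answer back to $H^2(\mathbb{C}_+)$. This passage is legitimate because the commutative diagram in Section~1 intertwines $\{M(t)\}_{t\geq 0}$ on $H^2(\mathbb{C}_+)$ with $\{T(t)\}_{t\geq 0}$ on $H^2(\mathbb{D})$; in particular $V^{-1}(g e^{-\lambda s})$ is a nonzero scalar multiple of $(V^{-1}g)\,\phi^{\lambda}$, so $V^{-1}N(g)$ and $A(V^{-1}g)$ coincide up to an irrelevant overall constant.

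First, I would compute $V^{-1}g$ for $g(s)=\prod_{k=1}^m(s-y_k)/(1+s)^n$. Substituting $s=(1-z)/(1+z)$ gives $1+s=2/(1+z)$ and $s-y_k=((1-y_k)-(1+y_k)z)/(1+z)$. Since $y_k\in i\mathbb{R}$ we have $1+y_k\neq 0$, so each factor can be rewritten as $-(1+y_k)(z-w_k)/(1+z)$ with $w_k=(1-y_k)/(1+y_k)$; a one-line Cayley-transform check yields $|w_k|=1$. Collecting all the factors and using the leading $2\sqrt{\pi}/(1+z)$ from $V^{-1}$, one obtains
\begin{equation*}
(V^{-1}g)(z)=C\,(z+1)^{n-m-1}\prod_{k=1}^{m}(z-w_k),
\end{equation*}
for a nonzero constant $C$, a polynomial of degree $n-1$ whose zeros all lie on $\mathbb{T}$. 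Here the hypothesis $n>m$ is precisely what guarantees the exponent $n-m-1\geq 0$ is meaningful.

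Next, applying Proposition \ref{prop pn} to this polynomial (in the role of $\widetilde{p}_{n-1}$) yields
\begin{equation*}
A\!\left((z+1)^{n-m-1}\textstyle\prod_{k=1}^{m}(z-w_k)\right)=K_{z^{n}\phi^{\delta}}.
\end{equation*}
Finally, I would map back through $V$. The Cayley transform $\tau(s)=(1-s)/(1+s)$ satisfies $z\circ\tau=(1-s)/(1+s)$ and $\phi^{\delta}\circ\tau=e^{-\delta s}$, so the inner function $z^{n}\phi^{\delta}$ is carried to $\bigl((1-s)/(1+s)\bigr)^{n}e^{-\delta s}$. Since $V$ sends model spaces to model spaces via composition with $\tau$, we obtain $V K_{z^{n}\phi^{\delta}}=K_{((1-s)/(1+s))^{n}e^{-\delta s}}$, which is the claimed identity. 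The only delicate point is the bookkeeping of the $(1+z)$-powers and the scalar constants in the change of variables; all of these are invertible in $L^{\infty}$ and so leave the generated subspace $A(\cdot)$ (and its image $N(\cdot)$) unaffected, and no new analytic input beyond the disc result of Proposition \ref{prop pn} is required.
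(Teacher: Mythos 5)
Your proposal is correct and follows essentially the same route as the paper: transfer to $H^2(\mathbb{D})$ via $V^{-1}$, identify the image of $\prod_{k=1}^m(s-y_k)/(1+s)^n$ as a constant times $(z+1)^{n-m-1}\prod_{k=1}^m(z-w_k)$ with all zeros on $\mathbb{T}$, apply Proposition \ref{prop pn} to get $K_{z^{n}\phi^{\delta}}$, and map back. The Cayley-transform bookkeeping you spell out is exactly the computation the paper leaves implicit.
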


In sum, we can obtain the characterization for $N(g)$  with a rational outer function $g\in H^2(\mathbb{C}_+),$  which essentially improves  \cite[Theorem 3.13]{LP3}.
\begin{thm}\label{thm generalC+G}
Let $g\in H^2(\mathbb{C}_+)$ be rational with $m$ zeros on the imaginary axis and let $n>m$ such that $s^{n-m} g(s)$ tends to a finite nonzero limit at $\infty.$  Then $g$ can be written as $g=G_1G_2$, where $G_1$ is rational and invertible in $L^\infty(i\mathbb{R})$ and $G_2(s)=\prod_{k=1}^m(s-y_k)/(1+s)^n$ with all $y_k\in i\mathbb{R}$.  Then it holds that
\begin{align*}N(g)=G_1(s)K_{(\frac{1-s}{1+s})^{n}e^{-\delta s}}.\end{align*}\end{thm}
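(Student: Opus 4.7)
The plan is to reduce Theorem~\ref{thm generalC+G} to the preceding proposition by extracting an invertible rational factor. First I would make the factorization $g = G_1G_2$ explicit. Because $g\in H^2(\mathbb{C}_+)$ is rational, $g$ has no poles in $\overline{\mathbb{C}_+}$, and its imaginary-axis zeros are $y_1,\dots,y_m\in i\mathbb{R}$ counted with multiplicity. Setting $G_2(s)=\prod_{k=1}^m(s-y_k)/(1+s)^n$ with $n$ chosen so that $s^{n-m}g(s)$ has a finite nonzero limit at $\infty$, the quotient $G_1:=g/G_2$ is rational, has no poles on $i\mathbb{R}$ (because $g$ is pole-free there and $G_2$ has no imaginary-axis zeros that were not already zeros of $g$), and no zeros on $i\mathbb{R}\cup\{\infty\}$ (these were all absorbed into $G_2$). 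Hence $G_1$ is bounded and bounded below on $i\mathbb{R}$, that is, invertible in $L^\infty(i\mathbb{R})$; moreover $G_1$ has no poles in $\overline{\mathbb{C}_+}$, so $G_1\in H^\infty(\mathbb{C}_+)$ and multiplication by $G_1$ is a bounded operator on $H^2(\mathbb{C}_+)$.

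Second, I would establish the multiplier identity
\[
N(G_1G_2)=G_1\,N(G_2).
\]
Since $G_1$ is invertible in $L^\infty(i\mathbb{R})$, the multiplication map $h\mapsto G_1 h$ is a linear homeomorphism of $L^2(i\mathbb{R})$ onto itself, and $H^2(\mathbb{C}_+)\subset L^2(i\mathbb{R})$ is closed. The generating set $\{G_1G_2 e^{-\lambda s}:0\le\lambda\le\delta\}$ is the image under $G_1$ of $\{G_2 e^{-\lambda s}:0\le\lambda\le\delta\}$, and $G_2\in H^2(\mathbb{C}_+)$ because $n>m$. Therefore taking closed linear spans commutes with multiplication by $G_1$, and the identity above follows.

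Third, I would invoke the preceding proposition to identify
\[
N(G_2)=K_{((1-s)/(1+s))^n e^{-\delta s}},
\]
and combine this with the multiplier identity to conclude
\[
N(g)=G_1\,K_{((1-s)/(1+s))^n e^{-\delta s}},
\]
which is the desired formula.

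The main obstacle is the second step: one must be careful that the closed linear span is stable under the multiplier $G_1$, which requires precisely the two-sided boundedness coming from $G_1^{\pm1}\in L^\infty(i\mathbb{R})$ together with the fact that $G_1\in H^\infty(\mathbb{C}_+)$ so that everything remains inside $H^2(\mathbb{C}_+)$. This is why the factorization in the first step is designed so that $G_1$ has no zeros on $i\mathbb{R}\cup\{\infty\}$ and no poles in $\overline{\mathbb{C}_+}$; once this structural property is verified, the rest of the proof is a formal assembly of the factorization with the preceding proposition.
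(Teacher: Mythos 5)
Your proposal is correct and follows essentially the same route as the paper: factor $g=G_1G_2$ with $G_1$ rational and invertible in $L^\infty(i\mathbb{R})$, use that invertibility to obtain $N(G_1G_2)=G_1N(G_2)$, and identify $N(G_2)=K_{\left(\frac{1-s}{1+s}\right)^{n}e^{-\delta s}}$ from the preceding proposition (itself obtained by transferring Proposition~\ref{prop pn} to $H^2(\mathbb{C}_+)$ via $V$). Your justification of the multiplier step, via multiplication by $G_1$ being a homeomorphism of $L^2(i\mathbb{R})$ that commutes with closed linear spans, is merely a more explicit version of the same argument the paper uses.
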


For a general function $h\in H^2(\mathbb{D})$, there follow two remarks on $A(h)$.
\begin{rem}(1) Suppose  $h\in H^2(\mathbb{D})$ has an inner factor, then  $A(h)$ is still of the form $uK$ with model space $K$ but now $u$ is not outer. So $A(h)$ is not a Toeplitz kernel due to the near invariance of Toeplitz kernels.

(2) Generally, factorize the function $h$ as $h=\theta u p$ where $\theta$ is inner and $u$ is invertible in $H^\infty$ (hence outer). Then $p$ is non-invertible and outer. If $p$ is rational then it has factors $z+w_j$ with $w_j\in \mathbb{T}$ and the characterization of $A(h)=\theta u  A(p)$ follows from Theorem \ref{thm improvedgeneralg}. \end{rem}

As the  extension of Proposition \ref{prop L2},   Theorem \ref{thm generalC+G} is applied to concretely illustrate two smallest nearly $\{S(t)^*\}_{t\geq 0}$ invariant subspaces containing the functions related with $e_\delta(\zeta):=e^{-\zeta}\chi_{(\delta,\infty)}(\zeta)$.
\begin{exm} (1) For $m \in \mathbb{N}$ and $\delta>0,$  consider the smallest cyclic subspace $[f_m]_s$ containing the function \begin{align*}f_m(\zeta)=\sum_{k=0}^{m}f_{\delta ,k}(\zeta)=\sum_{k=0}^{m}\frac{(\zeta-\delta)^{k}}{k!}e_{\delta}(\zeta)\in L^2(0,\infty).
\end{align*}
By the Laplace transform $\mathcal{L}$ in \eqref{V-map}, we map $[f_m]_s$ onto
\begin{align*}
 N\left(\sum_{k=0}^m \frac{1}{(1+s)^{k+1}}\right)= F_{m}(s) K_{\frac{1-s}{1+s} e^{-\delta s}},
\end{align*} where  $$F_{m}(s):= \sum_{k=0}^m \frac{1}{(1+s)^{k+1}}=  \frac{(1+s)^{m+1}-1}{s(1+s)^{m+1}}$$ is rational invertible in $H^2(\mathbb{C}_+)$.\vspace{1mm}

(2) For $m \in \mathbb{N}$ and $0<\delta_{1}<\delta_{2}<\cdots<\delta_{m},$   consider the smallest cyclic subspace $[g_{m}]_s$ containing the  function  $$g_{m}(\zeta)=\sum_{k=1}^{m} e_{\delta_k}(\zeta)\in L^2(0,\infty).$$
Similarly, employing the Laplace transform $\mathcal{L}$ in \eqref{V-map},  $[g_{m}]_s$ is mapped onto
\begin{align*} N\left(\sum_{k=1}^{m} e^{-\left(\delta_{k}-\delta_{1}\right)} e^{-\left(\delta_{k}-\delta_{1}\right) s}\right) = H_{m}(s) K_{\frac{1-s}{1+s} e^{-\delta_{1} s}},\end{align*}
 where \begin{align*}H_{m}(s):=\sum_{k=1}^{m} e^{-\left(\delta_{k}-\delta_{1}\right)} e^{-\left(\delta_{k}-\delta_{1}\right) s} \end{align*} is an invertible function. \end{exm}

\emph{Open Question.} How can one characterize the subspace $A(q)\subseteq H^2(\mathbb{D})$ when $q$ is non-invertible with an irrational outer factor?

\end{document}